\theoremstyle{plain}                    
\newtheorem{thm}{Theorem}[section]
\newtheorem{rmk}[thm]{Remark}
\newenvironment{acknowledgment}{{\flushleft \bf Acknowledgment:}}{}
\numberwithin{equation}{section}
\numberwithin{figure}{section}
\numberwithin{table}{section}
\newcommand\eref[1]{(\ref{#1})}
\newcommand\fref[1]{Figure~\ref{#1}}
\newcommand*\xbar[1]{%
  \hbox{%
    \vbox{%
      \hrule height 0.5pt 
      \kern0.4ex
      \hbox{%
        \kern-0.05em
        \ensuremath{#1}%
        \kern-0.00em
      }%
    }%
  }%
}
\newcommand{\dt}{\Delta t}
\newcommand{\dx}{\Delta x}
\newcommand{\dxi}{\Delta\xi}
\newcommand{\deta}{\Delta\eta}
\newcommand{\dy}{\Delta y}
\newcommand{\hf}{{\frac{1}{2}}}
\newcommand{\kph}{{k+\frac{1}{2}}}
\newcommand{\kmh}{{k-\frac{1}{2}}}
\newcommand{\kpmh}{{k\pm\frac{1}{2}}}
\newcommand{\mph}{{m+\frac{1}{2}}}
\newcommand{\mmh}{{m-\frac{1}{2}}}
\newcommand{\jph}{{j+\frac{1}{2}}}
\newcommand{\jmh}{{j-\frac{1}{2}}}
\newcommand{\jpmh}{{j\pm\frac{1}{2}}}
\newcommand{\lph}{{\ell+\frac{1}{2}}}
\newcommand{\lmh}{{\ell-\frac{1}{2}}}
\newcommand{\mU}{\bm{U}}
\newcommand{\mF}{\bm{F}}
\newcommand{\mS}{\bm{S}}
\newcommand{\softd}{d\hspace{-0.2mm}'}
\title{New High-Order Numerical Methods for Hyperbolic Systems of Nonlinear PDEs with Uncertainties}
\author{Alina Chertock\thanks{Department of Mathematics, North Carolina State University, Raleigh, NC, USA;
{\href{mailto:chertock@math.ncsu.edu}{chertock@math.ncsu.edu}}}, Michael Herty\thanks{Department of Mathematics, RWTH Aachen University,
Aachen, Germany; {\href{mailto:herty@igpm.rwth-aachen.de}{herty@igpm.rwth-aachen.de}}}, Arsen S. Iskhakov\thanks{Department of Mathematics,
North Carolina State University, Raleigh, NC, USA; {\href{mailto:aiskhak@ncsu.edu}{aiskhak@ncsu.edu}}}, Safa Janajra\thanks{Department of
Mathematics, North Carolina State University, Raleigh, NC, USA; {\href{mailto:sjanajr@ncsu.edu}{sjanajr@ncsu.edu}}},\\
Alexander Kurganov\thanks{Department of Mathematics, Shenzhen International Center for Mathematics, and Guangdong Provincial Key Laboratory
of Computational Science and Material Design, Southern University of Science and Technology, Shenzhen 518055, China;
{\href{mailto:alexander@sustech.edu.cn}{alexander@sustech.edu.cn}}}, and M\'{a}ria Luk\'a\v{c}ov\'a-Medvi{\softd}ov\'a\thanks{Institute of
Mathematics, Johannes Gutenberg University Mainz, Mainz, Germany; {\href{mailto:lukacova@uni-mainz.de}{lukacova@uni-mainz.de}}}}
\date{}
\begin{document}
\maketitle
\begin{abstract}
In this paper, we develop new high-order numerical methods for hyperbolic systems of nonlinear partial differential equations (PDEs) with
uncertainties. The new approach is realized in the semi-discrete finite-volume framework and is based on fifth-order weighted essentially
non-oscillatory (WENO) interpolations in (multidimensional) random space combined with second-order piecewise linear reconstruction in
physical space. Compared with spectral approximations in the random space, the presented methods are essentially non-oscillatory as they do
not suffer from the Gibbs phenomenon while still achieving high-order accuracy. The new methods are tested on a number of numerical examples
for both the Euler equations of gas dynamics and the Saint-Venant system of shallow-water equations. In the latter case, the methods are
also proven to be well-balanced and positivity-preserving.
\end{abstract}

\smallskip
\noindent
{\bf Keywords:} Hyperbolic conservation and balance laws with uncertainties, finite-volume methods, central-upwind schemes, weighted
essentially non-oscillatory (WENO) interpolations.

\medskip
\noindent
{\bf AMS subject classification:} 65M08, 76M12, 35L65, 35R60.

\section{Introduction}\label{sec1}
Many important scientific problems contain sources of uncertainties in parameters, initial and boundary conditions (ICs and BCs), etc. In
partial differential equations (PDEs), uncertainties may be described with the help of random variables. In this paper, the focus is placed
on hyperbolic systems of conservation and balance laws with uncertainties. Such systems read as
\begin{equation}
\bm U_t+\nabla_{\bm x}\cdot\bm F=\bm S,
\label{1.1}
\end{equation}
where $\bm U(\bm x,t,\bm\xi)\in\mathbb R^N$ is an unknown vector-function, $\bm F(\bm U):\mathbb R^N\to\mathbb R^N$ are the flux functions,
and $\bm S(\bm U,\bm x,\bm\xi)$ is a source term. Furthermore, $t$ is time and $\bm x\in\mathbb R^d$ are spatial variables. Without loss of
generality, we assume that $\bm\xi\in\Xi\subset\mathbb R^s$ are real-valued random variables. We denote by $(\Xi,{\cal F},\nu)$ the
underlying probability space, where $\Xi$ is a set of events, ${\cal F}(\Xi)$ is the $\sigma$-algebra of Borel measurable sets, and
$\nu(\bm\xi):\Xi\to\mathbb R_+$ is the probability density function (PDF), $\nu\in L^1(\Xi)$.

The system \eref{1.1} arises in many applications, including fluid dynamics, geophysics, electromagnetism, meteorology, and astrophysics, to
name a few. Quantifying uncertainties that appear as input quantities, as well as in the ICs and BCs due to empirical approximations or
measuring errors, is essential as it helps to perform sensitivity analysis and provides guidance to improve models.

We are interested in developing highly accurate and robust numerical methods for \eref{1.1}. Several methods have already been developed.
Monte Carlo-type methods (see, e.g., \cite{AM17,MSS13,MSS12a,MSS12b,MS12,MS17}) are reliable but not computationally efficient due to the
large number of realizations required to achieve accurate approximation of statistical moments. Another widely used approach for solving
\eref{1.1} is the generalized polynomial chaos (gPC), where the solution is sought in terms of a series of orthogonal polynomials with
respect to the probability density in $\bm\xi$; see, e.g., \cite{LK10,PIN-book,Xiu09,Xiu10}. There are two types of gPC methods: intrusive
and non-intrusive ones. In non-intrusive algorithms, such as stochastic collocation (gPC-SC) methods, one seeks to satisfy the governing
equations at a discrete set of nodes in the random space, employing the same numerical solver as for the deterministic problem and then
using interpolation and quadrature rules to evaluate statistical moments numerically; see, e.g.,
\cite{Xiu09,SMS13,Xiu07,Xiu10,PIN-book,TZ10}. In intrusive approaches, such as stochastic Galerkin (gPC-SG) methods, the gPC expansions are
substituted into the governing equations and projected by a Galerkin approximation to obtain deterministic equations for the expansion
coefficients; see, e.g., \cite{TLNE10a,TLNE10,DPL13,DKMSS20,DEN21a,DEN21b,GHI21,GH20,GHS19,JS19,PIN-book,PIN14,SS18,WTX17,WXZ21,ZS22}.
Solving the coefficient equations yields the statistical moments of the original uncertain problem solution.

Several challenges are associated with applying the gPC-SC and gPC-SG methods to nonlinear hyperbolic systems \eref{1.1}. It is well-known
that spectral-type gPC-based methods exhibit fast convergence when the solution depends smoothly on the random parameters. However, one of
the main problems in using these methods is related to the lack of smoothness of their solutions, which may break down in finite time as a
result of the development of shock and contact waves. Although these discontinuities appear in spatial variables, their propagation speed
can be affected by uncertainty, causing discontinuities in random variables and triggering aliasing errors \cite{Cho74} and Gibbs-type
phenomena \cite{LKNG04} (see also \cite{WanK-SISC06,JAX11,AC13,Barth}). Another open question in gPC-based stochastic methods is the
representation of strictly positive quantities such as gas density or water depth and/or the enforcement of discrete bound-preserving
constraints; see, e.g., \cite{DEN21a,DEN21b,GHS19,LK10,PIN14}. It can also be shown that the gPC-based methods, which are highly accurate for
moment estimation, might fail to approximate PDF, even for one-dimensional (1-D) noise \cite{DFS19}. An additional numerical
difficulty associated with implementing the gPC-SG methods is posed by the fact that the deterministic Jacobian of the projected system
differs from the random Jacobian of the original system. Therefore, when applied to general nonlinear hyperbolic systems, the gPC-SG method
results in a system for expansion coefficients, which is not necessarily globally hyperbolic \cite{DPL13}. Consequently, additional effort
is needed to develop stable numerical methods for (potentially ill-posed) systems for the gPC coefficients; see, e.g.,
\cite{DKMSS20,LK10,PDL,PIN14,PIN-book,TLNE10,DEN21a,DEN21b,GHI21,GH20,GHS19,SS18,WTX17,WXZ21}.

In this paper, we propose a new approach in which we conduct the approximation in the random space using weighted essentially
non-oscillatory (WENO) interpolants rather than orthogonal polynomial expansions, which are highly oscillatory in the case of nonsmooth
solutions. The new approach is realized in the semi-discrete finite-volume framework. The system \eref{1.1} is integrated over the
$(\bm x,\bm\xi)$-cells and the solution is obtained in terms of cell averages, which are evolved in time using numerical fluxes in the
$\bm x$-directions. These fluxes are evaluated with the help of a second-order piecewise linear reconstruction in $\bm x$ and a
Gauss-Legendre quadrature in $\bm\xi$ implemented using high-order WENO interpolants. This allows one to achieve high accuracy in $\bm\xi$
even in the generic case of discontinuous solutions. We refer the reader to related work in \cite{AM17,AbgTok,Abgrall_2,PTT,Tokareva}, where
a similar idea has been used in the framework of stochastic finite-volume methods. In this work, we implement a second-order finite-volume
method in physical space. It is combined with a high-order WENO interpolation in the random space. Notice that by using a high-order WENO
interpolation, we not only keep a high-order approximation in the random space but also properly (without the Gibbs phenomenon) resolve
discontinuities that may propagate into the random directions.

The new family of methods may use different numerical fluxes, piecewise-linear reconstructions, and high-order interpolations. A particular
choice made in this work is the following. We use the Riemann-problem-solver-free central-upwind (CU) fluxes introduced in \cite{KNP,KPshw};
the generalized minmod reconstruction in $\bm x$ (see, e.g., \cite{LN,NT,Swe}); the fifth-order Gauss-Legendre quadrature; and the recently
proposed fifth-order affine-invariant WENO-Z (Ai-WENO-Z) interpolation in $\bm\xi$ (see \cite{DLWW22,LLWDW23,WD22}), which is a stabilized
version of the original WENO-Z interpolation (see, e.g., \cite{DLGW,JSZ,Liu17,WLGD}). It should be observed that the lack of BCs in the
random space imposes an additional difficulty, as it requires the development of a special high-order interpolation technique near the
boundary. In this paper, we overcome this difficulty by designing a new one-sided Ai-WENO-Z interpolation. In addition, we restrict our
consideration to the simplest 1-D case ($d=s=1$) and two higher-dimensional extensions ($d=1$, $s=2$ and $d=2$, $s=1$). We test the
resulting scheme on several numerical examples for {the Euler equations of gas dynamics and Saint-Venant system of
shallow water equations, both considered with different probability densities in random variables.} For the shallow water application, we
use the same technique as in \cite{KPshw} to enforce the well-balanced (WB) property, that is, to make the scheme capable of exactly
preserving ``lake-at-rest''/still-water steady-state solutions.

{
The paper is organized as follows. In \S\ref{sec2}, we describe the proposed 1-D semi-discrete scheme with a detailed explanation of
numerical fluxes, reconstruction in the physical space and interpolation in the random space. \S\ref{sec3} extends the proposed methodology
for the cases of higher dimensions: $d=1$, $s=2$ and $d=2$, $s=1$. In \S\ref{sec4}, the evolution in time is discussed. In \S\ref{sec5}, we
illustrate the methodology using several numerical examples for the Euler equations of gas dynamics. In \S\ref{sec6}, we develop
well-balanced and positivity-preserving vesions of the proposed schemes for the Saint-Venant system with uncertainties and present
additional numerical examples. Finally, \S\ref{sec7} contains concluding remarks and ideas for future works.
}

\section{One-Dimensional Semi-Discrete Scheme}\label{sec2}
In the 1-D case ($d=s=1$), the system \eref{1.1} reduces to
\begin{equation}
\bm U_t+\bm F(\bm U)_x=\bm S(\bm U,x,\xi),
\label{2.1}
\end{equation}
{which we multiply by the PDF $\nu(\xi)$ and rewrite in terms of $\nu(\xi)\bm U(x,\xi,t)$:
\begin{equation}
(\nu\bm U)_t+\left[\nu\bm F(\bm U)\right]_x=\nu\bm S(\bm U,x,\xi).
\label{2.2f}
\end{equation}
We discretize \eref{2.2f}} on a finite-volume mesh with cells $C_{j,\ell}=[x_\jmh,x_\jph]\times[\xi_\lmh,\xi_\lph]$, $j=1,\ldots,N_x$,
$\ell=1,\ldots,N_\xi$, taken to be uniform both in $x$ and $\xi$. Here, $x_\jph-x_\jmh\equiv\dx$ and $\xi_\lph-\xi_\lmh\equiv\dxi$, and the
cells are centered at $(x_j,\xi_\ell)=\Big(\frac{x_\jmh+x_\jph}{2},\frac{\xi_\lmh+\xi_\lph}{2}\Big)$.

The computed {weighted cell averages of $\bm U$} are assumed to be available at a certain time~$t$:
\begin{equation}
\xbar{\bm U}_{j,\ell}(t)\approx\frac{1}{\dx\dxi}\iint\limits_{C_{j,\ell}}\bm U(x,\xi,t)\,\nu(\xi)\,{\rm d}x\,{\rm d}\xi.
\label{ubar}
\end{equation}
These quantities are evolved in time using a semi-discretization of \eref{2.2f}:
\begin{equation}
\frac{\rm d}{{\rm d}t}\,\xbar{\bm U}_{j,\ell}(t)=-\frac{\bm{{\cal F}}_{\jph,\ell}(t)-\bm{{\cal F}}_{\jmh,\ell}(t)}{\dx}+
\xbar{\bm S}_{j,\ell}(t),
\label{2.2}
\end{equation}
where $\bm{{\cal F}}_{\jpmh,\ell}$ are numerical fluxes (see \S\ref{sec21}) and $\xbar{\bm S}_{j,\ell}$ are approximated cell averages of
the source term:
\begin{equation}
\xbar{\bm S}_{j,\ell}(t)\approx\frac{1}{\dx\dxi}\iint\limits_{C_{j,\ell}}\bm S(\bm U,x,\xi)\,\nu(\xi)\,{\rm d}x\,{\rm d}\xi.
\label{2.3}
\end{equation}
Note that all indexed quantities that depend on $\bm U$ are time-dependent, but from now on, this dependence will be omitted for the sake of
brevity.

\subsection{Numerical Fluxes and Source Terms}\label{sec21}
The numerical fluxes in \eref{2.2} are obtained by averaging the fluxes in the $\xi$-direction:
\begin{equation}
\bm{{\cal F}}_{\jph,\ell}=\sum_{i=1}^M\mu_i\,\nu(\xi_{\ell_i})\,\bm{{\cal F}}\big(\bm U^-_{\jph,\ell_i},\bm U^+_{\jph,\ell_i}\big),
\label{2.4}
\end{equation}
where the integration in the $\xi$-direction is performed using a Gauss-Legendre quadrature on the cell $[\xi_\lmh,\xi_\lph]$ with the
weights $\mu_i$ and nodes $\xi_{\ell_i}\in(\xi_\lmh,\xi_\lph)$, $i=1,\ldots,M$. Furthermore, $\bm{{\cal F}}$ is computed using the
reconstructed one-sided point values of $\bm U$ at $(x_\jph\pm0,\xi_{\ell_i})$, which are denoted by $\bm U^\pm_{\jph,\ell_i}$. We use the
CU numerical flux from \cite{KNP}:
\begin{equation}
\begin{aligned}
\bm{{\cal F}}\big(\bm U^-_{\jph,\ell_i},\bm U^+_{\jph,\ell_i}\big)&=\frac{a_{\jph,\ell}^+\bm F\big(\bm U^-_{\jph,\ell_i}\big)-
a_{\jph,\ell}^-\bm F\big(\bm U^+_{\jph,\ell_i}\big)}{a_{\jph,\ell}^+-a_{\jph,\ell}^-}\\
&+\frac{a_{\jph,\ell}^+a_{\jph,\ell}^-}{a_{\jph,\ell}^+-a_{\jph,\ell}^-}\left(\bm U^+_{\jph,\ell_i}-\bm U^-_{\jph,\ell_i}\right),
\end{aligned}
\label{2.5}
\end{equation}
where $a_{\jph,\ell}^\pm$ are the one-sided local speeds of propagation in the $x$-direction at $(x_\jph,\xi_\ell)$ estimated using the
smallest ($\lambda_1$) and largest ($\lambda_N$) eigenvalues of the Jacobian~$\frac{\partial\bm F}{\partial\bm U}$:
\begin{equation}
\begin{aligned}
&a_{\jph,\ell}^-=\min\left\{\lambda_1\left(\frac{\partial\bm F}{\partial\bm U}\left(\bm U^-_{\jph,\ell}\right)\right),\,
\lambda_1\left(\frac{\partial\bm F}{\partial\bm U}\left(\bm U^+_{\jph,\ell}\right)\right),\,0\right\},\\
&a_{\jph,\ell}^+=\max\left\{\lambda_N\left(\frac{\partial\bm F}{\partial\bm U}\left(\bm U^-_{\jph,\ell}\right)\right),\,
\lambda_N\left(\frac{\partial\bm F}{\partial\bm U}\left(\bm U^+_{\jph,\ell}\right)\right),\,0\right\}.
\end{aligned}
\label{2.6}
\end{equation}

Finally, the integral on the right-hand side (RHS) of \eref{2.3} is discretized in two steps. First, we use the same Gauss-Legendre
quadrature as in \eref{2.4} to numerically integrate with respect to $\xi$ and obtain
\begin{equation}
\xbar{\bm S}_{j,\ell}\approx\frac{1}{\dx}\sum_{i=1}^M\mu_i\,\nu(\xi_{\ell_i})
\int\limits_{x_\jmh}^{x_\jph}\bm S(\bm U(x,\xi_{\ell_i},t),x,\xi_{\ell_i})\,{\rm d}x.
\label{2.7}
\end{equation}
Second, depending on the system at hand, we choose a proper quadrature for the integrals on the RHS of \eref{2.7}. In the case of the
Saint-Venant system considered in \S\ref{sec6}, a special quadrature for the geometric source term will be introduced to enforce a WB
property of the resulting scheme.

\subsection{Reconstruction of the Point Values}\label{sec22}
This section describes reconstruction procedures in both physical and random space. We use a piecewise linear reconstruction in $x$ and
a high-order interpolation in $\xi$. The order of the resulting scheme in $\xi$ depends on the accuracy of the quadratures used to evaluate
the numerical flux and source terms in \eref{2.4} and \eref{2.7}. In this paper, we use the fifth-order Gauss-Legendre quadrature with $M=3$
and the corresponding weights
\begin{equation}
\mu_1=\mu_3=\frac{5}{18},\quad\mu_2=\frac{4}{9},
\label{2.11f}
\end{equation}
and nodes $\xi_{\ell_i}$, $i=1,2,3$ with
\begin{equation}
\xi_{\ell_1}=\xi_{\ell-\kappa},\quad\xi_{\ell_2}=\xi_\ell,\quad\xi_{\ell_3}=\xi_{\ell+\kappa},\quad\kappa=\frac{1}{2}\sqrt{\frac{3}{5}}.
\label{2.12f}
\end{equation}
We will therefore first apply a piecewise linear reconstruction in the $x$-direction to obtain the one-sided point values
$\bm U_{\jph,\ell}^\pm$ at the midpoints of the cell interfaces $(x_\jph,\xi)$, $\xi\in(\xi_\lmh,\xi_\lph)$, and then use these values to
interpolate the one-sided point values $\bm U_{\jph,\ell\pm\kappa}^\pm$ at the Gauss-Legendre points $(x_\jph,\xi_{\ell\pm\kappa})$ along
the corresponding cell interfaces. The latter is done with the help of a fifth-order Ai-WENO-Z interpolation \cite{DLWW22,LLWDW23,WD22} in
the $\xi$-directions.

\subsubsection{Reconstruction in Physical Space}\label{sec221}
In order to reconstruct the one-sided point values $\bm U_{\jph,\ell}^\pm$, we first compute the slopes
{$(\,\xbar{\bm U}_x)_{j,\ell}\approx\nu(\xi_\ell)\,\bm U_x(x_j,\xi_\ell,t)$} and then use them to obtain
{
\begin{equation}
\bm U_{\jmh,\ell}^+=\frac{1}{\nu(\xi_\ell)}\Big[\,\xbar{\bm U}_{j,\ell}-\frac{\dx}{2}(\,\xbar{\bm U}_x)_{j,\ell}\Big]\quad\mbox{and}\quad
\bm U_{\jph,\ell}^-=\frac{1}{\nu(\xi_\ell)}\Big[\,\xbar{\bm U}_{j,\ell}+\frac{\dx}{2}(\,\xbar{\bm U}_x)_{j,\ell}\Big].
\label{2.8}
\end{equation}
}
These values are second-order accurate and non-oscillatory provided the slopes are approximated using an appropriate nonlinear limiter with
at least the first order accuracy. In this paper, we use the generalized minmod limiter (see, e.g., \cite{LN,NT,Swe}):
\begin{equation}
{(\,\xbar{\bm U}_x)_{j,\ell}}={\rm minmod}\left(\theta\,\frac{\xbar{\bm U}_{j,\ell}-\xbar{\bm U}_{j-1,\ell}}{\dx},
\frac{\xbar{\bm U}_{j+1,\ell}-\xbar{\bm U}_{j-1,\ell}}{2\dx},\theta\,\frac{\xbar{\bm U}_{j+1,\ell}-\xbar{\bm U}_{j,\ell}}{\dx}\right),
\label{2.9}
\end{equation}
where the minmod function
\begin{equation}
{\rm{minmod}}\left(c_1,c_2,\ldots\right) = \left\{\begin{aligned}
&\min(c_1,c_2,\ldots)&&\mbox{if}~c_i>0,~\forall i,\\
&\max(c_1,c_2,\ldots)&&\mbox{if}~c_i<0,~\forall i,\\
&0&&\mbox{otherwise},
\end{aligned}\right.
\label{2.10}
\end{equation}
is applied componentwise. The parameter $\theta\in[1,2]$ controls the amount of numerical dissipation: larger values yield less dissipative
schemes, but the computed quantities might be oscillatory.
{
\begin{rmk}
It should be observed that formula \eref{2.8} is well defined even when $\nu(\xi_\ell)\simeq0$ due to the same scaling with respect to $\nu$
in \eref{2.8} and \eref{ubar}, \eref{2.4}, and \eref{2.7}.
\end{rmk}
}

\subsubsection{Interpolation in Random Space}\label{sec222}
Equipped with $\bm U_{\jph,\ell}^\pm$, we proceed with the calculation of the remaining point values $\bm U_{\jph,\ell\pm\kappa}^\pm$. For
the left-sided values $\bm U_{\jph,\ell\pm\kappa}^-$, the interpolation is based on $\bm U_{\jph,\ell}^-$ and its neighboring (in $\xi$)
values $\bm U_{\jph,\ell\pm1}^-$ and $\bm U_{\jph,\ell\pm2}^-$. Similarly, for the right-sided values $\bm U_{\jph,\ell\pm\kappa}^+$, the
interpolation is based on $\bm U_{\jph,\ell}^+$, $\bm U_{\jph,\ell\pm1}^+$, and $\bm U_{\jph,\ell\pm2}^+$.

The Ai-WENO-Z interpolation is applied in a componentwise manner. We denote by $U$ a certain component of $\bm U$ throughout the remainder
of this section. We describe the algorithm to calculate the left values $U_{\jph,\ell\pm\kappa}^-$ (the right values
$U_{\jph,\ell\pm\kappa}^+$ can be obtained in a similar way).

\paragraph{Internal points.} Away from the $\xi$-boundaries, the point values $U_{\jph,\ell\pm\kappa}^-$ are obtained using the ``standard''
fifth-order Ai-WENO-Z interpolation, which is based on the five point values $U^-_{\jph,\ell}$, $U^-_{\jph,\ell\pm1}$, and
$U^-_{\jph,\ell\pm2}$ available from \eref{2.8}:
\begin{equation}
U_{\jph,\ell\pm\kappa}^-=\sum_{i=0}^2\omega_{i,\pm}{\cal P}_i(\xi_{\ell\pm\kappa}).
\label{2.11}
\end{equation}
Here, ${\cal P}_0(\xi)$, ${\cal P}_1(\xi)$, and ${\cal P}_2(\xi)$ are the parabolic interpolants obtained on the 3-point stencils
$\big\{(x_\jph,\xi_{\ell-2+i}),(x_\jph,\xi_{\ell-1+i}),(x_\jph,\xi_{\ell+i})\big\}$ for $i=0$, 1, and 2, respectively, their corresponding
values are
\begin{equation*}
\begin{aligned}
&{\cal P}_0(\xi_{\ell\pm\kappa})=\frac{3\pm2\sqrt{15}}{40}\,U_{\jph,\ell-2}^--\frac{3\pm4\sqrt{15}}{20}\,U_{\jph,\ell-1}^-
+\frac{43\pm6\sqrt{15}}{40}\,U_{\jph,\ell}^-,\\[0.2ex]
&{\cal P}_1\left(\xi_{\ell\pm\kappa}\right)=\frac{3\mp2\sqrt{15}}{40}\,U_{\jph,\ell-1}^-+\frac{17}{20}\,U_{\jph,\ell}^-
+\frac{3\pm2\sqrt{15}}{40}\,U_{\jph,\ell+1}^-,\\[0.2ex]
&{\cal P}_2\left(\xi_{\ell\pm\kappa}\right)=\frac{43\mp6\sqrt{15}}{40}\,U_{\jph,\ell}^--\frac{3\mp4\sqrt{15}}{20}\,U_{\jph,\ell+1}^-
+\frac{3\mp2\sqrt{15}}{40}\,U_{\jph,\ell+2}^-,
\end{aligned}
\end{equation*}
and the weights $\omega_{i,\pm}$ $(i=0,1,2)$ are given by
\begin{equation}
\omega_{i,\pm}=\frac{\alpha_{i,\pm}}{\alpha_{0,\pm}+\alpha_{1,\pm}+\alpha_{2,\pm}},\quad\alpha_{i,\pm}=d_{i,\pm}
\left[1+\left(\frac{\tau}{\beta_i+\varepsilon_1\varphi_i^2+\varepsilon_2}\right)^2\,\right],\quad\tau=|\beta_2-\beta_0|,
\label{2.12}
\end{equation}
with the so-called linear weights
\begin{equation*}
d_{0,\pm}=\frac{43\mp6\sqrt{15}}{240},\quad d_{1,\pm}=\frac{77}{120},\quad d_{2,\pm}=\frac{43\pm6\sqrt{15}}{240},
\end{equation*}
selected in a unique way that guarantees the fifth-order accuracy of \eref{2.11} when $\tau=0$. In \eref{2.12}, $\varphi_i=
\frac{1}{5}\sum_{m=i-2}^{i+2}|U_{\jph,m}^--\psi_i|$ with $\psi_i=\frac{1}{5}\sum_{m=i-2}^{i+2}U_{\jph,m}^-$ and
$1\gg\varepsilon_1\gg\varepsilon_2$ being two small positive numbers taken $\varepsilon_1=10^{-12}$ and $\varepsilon_2=10^{-40}$ in all of
the numerical examples presented in \S\ref{sec5} and \S\ref{sec6}. Finally, $\beta_i$ in \eref{2.12} are the smoothness indicators for the
corresponding interpolants ${\cal P}_i(\xi)$:
\begin{equation}
\beta_i=\sum_{r=1}^2(\dxi)^{2r-1}\int\limits_{\xi_\lmh}^{\xi_\lph}\left(\frac{{\rm d}^r{\cal P}_i}{{\rm d}\xi^r}\right)^2{\rm d}\xi,
\quad i=0,1,2.
\label{2.13}
\end{equation}
Evaluating the integrals in \eref{2.13} results in
\begin{equation*}
\begin{aligned}
&\beta_0=\frac{13}{12}\Big(U_{\jph,\ell-2}^--2U_{\jph,\ell-1}^-+U_{\jph,\ell}^-\Big)^2+\frac{1}{4}\Big(U_{\jph,\ell-2}^--4U_{\jph,\ell-1}^-+
3U_{\jph,\ell}^-\Big)^2,\\[0.4ex]
&\beta_1=\frac{13}{12}\Big(U_{\jph,\ell-1}^--2U_{\jph,\ell}^-+U_{\jph,\ell+1}^-\Big)^2+\frac{1}{4}\Big(U_{\jph,\ell-1}^--U_{\jph,\ell+1}^-
\Big)^2,\\[0.4ex]
&\beta_2=\frac{13}{12}\Big(U_{\jph,\ell}^--2U_{\jph,\ell+1}^-+U_{\jph,\ell+2}^-\Big)^{2}+\frac{1}{4}\Big(3U_{\jph,\ell}^--4U_{\jph,\ell+1}^-
+U_{\jph,\ell+2}^-\Big)^{2}.
\end{aligned}
\end{equation*}

\paragraph{Near the $\xi$-boundary points.} It is important to point out that no BCs are imposed for the solution of \eref{2.1} in the
$\xi$-direction. Therefore, the computation of the point values in cells located within the distance of $2\dxi$ from the $\xi$-boundaries
should be carried out differently. We use one-sided stencils to construct the three parabolic interpolants in \eref{2.11}.

Assume that $\xi\in[a,b]$: the cells $C_{j,1}$ and $C_{j,2}$ for all $j$ are located near the $\xi=\xi_\hf=a$ boundary. Once again, we only
present the algorithm for computing the point values $U_{\jph,1\pm\kappa}^-$ and $U_{\jph,2\pm\kappa}^-$, where $\kappa$ is given in
\eref{2.12f}, and note that the point values $U_{\jph,1\pm\kappa}^+$ and $U_{\jph,2\pm\kappa}^+$ can be obtained similarly. The
corresponding point values in the cells near the $\xi=\xi_{N_\xi+\hf}=b$ boundary are computed mirror-symmetrically.

We obtain the point values $U_{\jph,1\pm\kappa}^-$ and $U_{\jph,2\pm\kappa}^-$ using the fifth-order Ai-WENO-Z interpolation based on the
five point values $U^-_{\jph,\ell}$, $\ell=1,\ldots,5$, available from \eref{2.8}, and the parabolic interpolants obtained on the 3-point
stencils $\big\{(x_\jph,\xi_{1+i}),(x_\jph,\xi_{2+i}),(x_\jph,\xi_{3+i})\big\}$ for $i=0$, 1, and 2, respectively. The desired values of $U$
are then computed by
\begin{equation*}
U_{\jph,\ell\pm\kappa}^-=\sum_{i=0}^2\omega_{i,\ell\pm\kappa}{\cal P}_i(\xi_{\ell\pm\kappa}),\quad\ell=1,2,
\end{equation*}
with
\begin{align*}
&{\cal P}_0(\xi_{1\pm\kappa})=\frac{43\mp6\sqrt{15}}{40}\,U_{\jph,1}^--\frac{3\mp4\sqrt{15}}{20}\,U_{\jph,2}^-+
\frac{3\mp2\sqrt{15}}{40}\,U_{\jph,3}^-,\\[0.2ex]
&{\cal P}_1(\xi_{1\pm\kappa})=\frac{123\mp10\sqrt{15}}{40}\,U_{\jph,2}^--\frac{63\mp8\sqrt{15}}{20}\,U_{\jph,3}^-+
\frac{43\mp6\sqrt{15}}{40}\,U_{\jph,4}^-,\\[0.2ex]
&{\cal P}_2(\xi_{1\pm\kappa})=\frac{243\mp14\sqrt{15}}{40}\,U_{\jph,3}^--\frac{163\mp12\sqrt{15}}{20}\,U_{\jph,4}^-+
\frac{123\mp10\sqrt{15}}{40}\,U_{\jph,5}^-,\\[0.5ex]
&{\cal P}_0(\xi_{2\pm\kappa})=\frac{3\mp2\sqrt{15}}{40}\,U_{\jph,1}^-+\frac{17}{20}\,U_{\jph,2}^-+\frac{3\pm2\sqrt{15}}{40}\,U_{\jph,3}^-,
\\[0.2ex]
&{\cal P}_1(\xi_{2\pm\kappa})=\frac{43\mp6\sqrt{15}}{40}\,U_{\jph,2}^--\frac{3\mp4\sqrt{15}}{20}\,U_{\jph,3}^-+
\frac{3\mp2\sqrt{15}}{40}\,U_{\jph,4}^-,\\[0.2ex]
&{\cal P}_2(\xi_{2\pm\kappa})=\frac{123\mp10\sqrt{15}}{40}\,U_{\jph,3}^--\frac{63\mp8\sqrt{15}}{20}\,U_{\jph,4}^-+
\frac{43\mp6\sqrt{15}}{40}\,U_{\jph,5}^-.
\end{align*}
As in the case of internal points, the nonlinear weights $\omega_{i,\ell\pm\kappa}$ $(i=0,1,2,\,\,\ell=1,2)$ can be computed from the
corresponding linear weights $d_{i,\ell\pm\kappa}$ and the smoothness indicators $\beta_{i,\ell}$. However, in this case, some of the linear
weights are negative, which may lead to the appearance of nonphysical reconstructed values. In order to avoid such a situation, we implement
the technique from \cite{SHS} and compute the nonlinear weights in \eref{2.11} by
\begin{equation*}
\omega_{i,\ell\pm\kappa}=\widetilde\delta_{\ell\pm\kappa}\,\widetilde\omega_{i,\ell\pm\kappa}-\widehat\delta_{\ell\pm\kappa}\,
\widehat\omega_{i,\ell\pm\kappa},
\end{equation*}
where
\begin{equation}
\begin{aligned}
&\widetilde\omega_{i,\ell\pm\kappa}=\frac{\widetilde\alpha_{i,\ell\pm\kappa}}{\widetilde\alpha_{0,\ell\pm\kappa}
+\widetilde\alpha_{1,\ell\pm\kappa}+\widetilde\alpha_{2,\ell\pm\kappa}},~~\widetilde\alpha_{i,\ell\pm\kappa}=\widetilde d_{i,\ell\pm\kappa}
\left[1+\left(\frac{\beta_{2,\ell}-\beta_{0,\ell}}{\beta_{i,\ell}+\varepsilon_1\varphi_i^2+\varepsilon_2}\right)^2\,\right],\\[0.2ex]
&\widehat\omega_{i,\ell\pm\kappa}=\frac{\widehat\alpha_{i,\ell\pm\kappa}}{\widehat\alpha_{0,\ell\pm\kappa}+\widehat\alpha_{1,\ell\pm\kappa}
+\widehat\alpha_{2,\ell\pm\kappa}},~~\widehat\alpha_{i,\ell\pm\kappa}=\widehat d_{i,\ell\pm\kappa}
\left[1+\left(\frac{\beta_{2,\ell}-\beta_{0,\ell}}{\beta_{i,\ell}+\varepsilon_1\varphi_i^2+\varepsilon_2}\right)^2\,\right],
\end{aligned}
\label{2.14}
\end{equation}
and
\begin{align*}
&\widetilde d_{0,1-\kappa}=\frac{18489-782\sqrt{15}}{17787},&&\widetilde d_{1,1-\kappa}=\frac{-711+640\sqrt{15}}{17787},
&&\widetilde d_{2,1-\kappa}=\frac{9+142\sqrt{15}}{17787},\\
&\widetilde d_{0,1+\kappa}=\frac{77142-5368\sqrt{15}}{73767},&&\widetilde d_{1,1+\kappa}=\frac{-2844+5048\sqrt{15}}{73767},
&&\widetilde d_{2,1+\kappa}=\frac{-531+320\sqrt{15}}{73767},\\
&\widetilde d_{0,2-\kappa}=\frac{38022+2648\sqrt{15}}{73767},&&\widetilde d_{1,2-\kappa}=\frac{36276-2968\sqrt{15}}{73767},
&&\widetilde d_{2,2-\kappa}=\frac{-531+320\sqrt{15}}{73767},\\
&\widetilde d_{0,2+\kappa}=\frac{123-10\sqrt{15}}{240},&&\widetilde d_{1,2+\kappa}=\frac{57+4\sqrt{15}}{120},
&&\widetilde d_{2,2+\kappa}=\frac{3+2\sqrt{15}}{240},\\[0.5ex]
&\widehat d_{0,1-\kappa}=\frac{8769-1342\sqrt{15}}{5334},&&\widehat d_{1,1-\kappa}=\frac{-1662+640\sqrt{15}}{2667},
&&\widehat d_{2,1-\kappa}=\frac{-111+62\sqrt{15}}{5334},\\
&\widehat d_{0,1+\kappa}=\frac{19131-1564\sqrt{15}}{17607},&&\widehat d_{1,1+\kappa}=\frac{-942+1244\sqrt{15}}{17607},
&&\widehat d_{2,1+\kappa}=\frac{-582+320\sqrt{15}}{17607},\\
&\widehat d_{0,2-\kappa}=\frac{9171+524\sqrt{15}}{17607},&&\widehat d_{1,2-\kappa}=\frac{9018-844\sqrt{15}}{17607},
&&\widehat d_{2,2-\kappa}=\frac{-582+320\sqrt{15}}{17607},\\
&\widehat d_{0,2+\kappa}=0,&&\widehat d_{1,2+\kappa}=0,&&\widehat d_{2,2+\kappa}=0,
\end{align*}
with
\begin{align*}
&\widetilde\delta_{1-\kappa}=\frac{83+8\sqrt{15}}{40},&&\widetilde\delta_{1+\kappa}=\frac{157+2\sqrt{15}}{80},
&&\widetilde\delta_{2-\kappa}=\widetilde\delta_{1+\kappa},&&\widetilde\delta_{2+\kappa}=1,\\[0.2ex]
&\widehat\delta_{1-\kappa}=\frac{43+8\sqrt{15}}{40}&&\widehat\delta_{1+\kappa}=\frac{77+2\sqrt{15}}{80},
&&\widehat\delta_{2-\kappa}=\widehat\delta_{1+\kappa},&&\widehat\delta_{2+\kappa}=0.
\end{align*}
Finally, $\beta_{i,1}$ and $\beta_{i,2}$ in \eref{2.14} are the smoothness indicators for the corresponding parabolic interpolants
${\cal P}_i(\xi)$:
\begin{equation}
\beta_{i,1}=\sum_{r=1}^2(\dxi)^{2r-1}\int\limits_{\xi_\hf}^{\xi_\frac{3}{2}}\left(\frac{{\rm d}^r{\cal P}_i}{{\rm d}\xi^r}\right)^2
{\rm d}\xi,\quad\beta_{i,2}=\sum_{r=1}^2(\dxi)^{2r-1}\int\limits_{\xi_\frac{3}{2}}^{\xi_\frac{5}{2}}
\left(\frac{{\rm d}^r{\cal P}_i}{{\rm d}\xi^r}\right)^2{\rm d}\xi.
\label{2.15}
\end{equation}
Evaluating the integrals in \eref{2.15} results in
\begin{align*}
\beta_{0,1}=\frac{1}{3}&\Big[10\big(U_{\jph,1}^-\big)^2-31U_{\jph,1}^-U_{\jph,2}^-+25\big(U_{\jph,2}^-\big)^2\\
&+11U_{\jph,1}^-U_{\jph,3}^--19U_{\jph,2}^-U_{\jph,3}^-+4\big(U_{\jph,3}^-\big)^2\Big],\\
\beta_{1,1}=\frac{1}{3}&\Big[22\big(U_{\jph,2}^-\big)^2-73U_{\jph,2}^-U_{\jph,3}^-+61\big(U_{\jph,3}^-\big)^2\\
&+29U_{\jph,2}^-U_{\jph,4}^--49U_{\jph,3}^-U_{\jph,4}^-+10\big(U_{\jph,4}^-\big)^2\Big],\\
\beta_{2,1}=\frac{1}{3}&\Big[40\big(U_{\jph,3}^-\big)^2-139U_{\jph,3}^-U_{\jph,4}^-+121\big(U_{\jph,4}^-\big)^2\\
&+59U_{\jph,3}^-U_{\jph,5}^--103U_{\jph,4}^-U_{\jph,5}^-+22\big(U_{\jph,5}^-\big)^2\Big],\\
\beta_{0,2}=\frac{1}{3}&\Big[4\big(U_{\jph,1}^-\big)^2-13U_{\jph,1}^-U_{\jph,2}^-+13\big(U_{\jph,2}^-\big)^2\\
&+5U_{\jph,1}^-U_{\jph,3}^--13U_{\jph,2}^-U_{\jph,3}^-+4\big(U_{\jph,3}^-\big)^2\Big],\\
\beta_{1,2}=\frac{1}{3}&\Big[10\big(U_{\jph,2}^-\big)^2-31U_{\jph,2}^-U_{\jph,3}^-+25\big(U_{\jph,3}^-\big)^2\\
&+11U_{\jph,2}^-U_{\jph,4}^--19U_{\jph,3}^-U_{\jph,4}^-+4\big(U_{\jph,4}^-\big)^2\Big],\\
\beta_{2,2}=\frac{1}{3}&\Big[22\big(U_{\jph,3}^-\big)^2-73U_{\jph,3}^-U_{\jph,4}^-+61\big(U_{\jph,4}^-\big)^2\\
&+29U_{\jph,3}^-U_{\jph,5}^--49U_{\jph,4}^-U_{\jph,5}^-+10\big(U_{\jph,5}^-\big)^2\Big].
\end{align*}
In \eref{2.14}, $\varphi_i=\frac{1}{5}\sum_{m=1}^5|U^-_{\jph,m}-\psi_i|$, $\psi_i=\frac{1}{5}\sum_{m=1}^5U^-_{\jph,m}$, and the small
numbers $1\gg\varepsilon_1\gg\varepsilon_2>0$ are taken $\varepsilon_1=10^{-12}$ and $\varepsilon_2=10^{-40}$ in all of the numerical
examples presented in \S\ref{sec5} and \S\ref{sec6}.

\section{Higher-Dimensional Extensions}\label{sec3}
This section discusses extensions of the numerical method introduced in \S\ref{sec2} to higher-dimensional cases. For simplicity of
presentation, we will consider two particular multidimensional cases with $d=1$, $s=2$ (\S\ref{sec31}) and $d=2$, $s=1$ (\S\ref{sec32}).

\subsection{Case $d=1$ and $s=2$}\label{sec31}
Consider a different version of the studied system \eref{1.1}:
\begin{equation*}
\mU_t+\mF(\mU)_x=\mS(\mU,x,\xi,\eta),
\end{equation*}
{
or, equivalently,
\begin{equation}
(\nu\mU)_t+\left[\nu\mF(\mU)\right]_x=\nu\mS(\mU,x,\xi,\eta),
\label{3.1}
\end{equation}
}
where $\mU=\mU(x,\xi,\eta,t)$ with $\xi$ and $\eta$ being random variables with joint PDF $\nu(\xi,\eta)$.

In order to develop a finite-volume method for \eref{3.1}, we introduce the cells
$C_{j,\ell,m}:=[x_\jmh,x_\jph]\times[\xi_\lmh,\xi_\lph]\times[\eta_\mmh,\eta_\mph]$, $j=1,\ldots,N_x$, $\ell=1,\ldots,N_\xi$,
$m=1,\ldots,N_\eta$, and evolve the corresponding {weighted cell averages},
\begin{equation*}
\xbar{\bm U}_{j,\ell,m}\approx
\frac{1}{\dx\dxi\deta}\iiint\limits_{C_{j,\ell,m}}\bm U(x,\xi,\eta,t)\nu(\xi,\eta)\,{\rm d}x\,{\rm d}\xi\,{\rm d}\eta,
\end{equation*}
in time according to the following semi-discretization of \eref{3.1}:
\begin{equation}
\frac{{\rm d}}{{\rm d}t}\,\xbar{\mU}_{j,\ell,m}=
-\frac{\bm{{\cal F}}_{\jph,\ell,m}-\bm{{\cal F}}_{\jmh,\ell,m}}{\dx}+\xbar{\bm S}_{j,\ell,m},
\label{3.2}
\end{equation}
where $\bm{{\cal F}}_{\jpmh,\ell,m}$ are numerical fluxes and $\xbar{\bm S}_{j,\ell,m}$ are approximated cell averages of the source term.

\subsubsection{Numerical Fluxes and Source Terms}
Similarly to the 1-D case ($d=s=1$) considered in \S\ref{sec21}, the numerical fluxes in \eref{3.2} approximate the average of the
corresponding fluxes, but now in both $\xi$ and $\eta$ directions:
\begin{equation}
\bm{{\cal F}}_{\jph,\ell,m}=\sum_{i,r=1}^M\mu_i\,\mu_r\,\nu(\xi_{\ell_i},\eta_{m_r})\,
\bm{{\cal F}}\big(\bm U^-_{\jph,\ell_i,m_r},\bm U^+_{\jph,\ell_i,m_r}\big),
\label{3.3}
\end{equation}
where $\mu_i$, $i=1,\ldots,M$ are the Gauss-Legendre weights and $(\xi_{\ell_i},\eta_{m_r})$, $i,r=1,\ldots,M$ are the corresponding nodes
in the cell $[\xi_\lmh,\xi_\lph]\times[\eta_\mmh,\eta_\mph]$. In \eref{3.3},
\begin{equation}
\begin{aligned}
\bm{{\cal F}}\big(\bm U^-_{\jph,\ell_i,m_r},\bm U^+_{\jph,\ell_i,m_r}\big)&=\frac{a_{\jph,\ell,m}^+\bm F\big(\bm U^-_{\jph,\ell_i,m_r}\big)
-a_{\jph,\ell,m}^-\bm F\big(\bm U^+_{\jph,\ell_i,m_r}\big)}{a_{\jph,\ell,m}^+-a_{\jph,\ell,m}^-}\\
&+\frac{a_{\jph,\ell,m}^+a_{\jph,\ell,m}^-}{a_{\jph,\ell,m}^+-a_{\jph,\ell,m}^-}\left(\bm U^+_{\jph,\ell_i,m_r}-\bm U^-_{\jph,\ell_i,m_r}
\right)
\end{aligned}
\label{3.4}
\end{equation}
are the CU numerical fluxes with $\bm U^\pm_{\jph,\ell_i,m_r}$ being the reconstructed one-sided point values of $\bm U$ at
$(x_\jph\pm0,\xi_{\ell_i},\eta_{m_r})$ and $a_{\jph,\ell,m}^\pm$ being one-sided local speeds of propagation in the $x$-direction at
$(x_\jph,\xi_\ell,m_r)$. The latter can be estimated as in \eref{2.6}:
\begin{equation}
\begin{aligned}
&a_{\jph,\ell,m}^-=\min\left\{\lambda_1\left(\frac{\partial\bm F}{\partial\bm U}\left(\bm U^-_{\jph,\ell,m}\right)\right),\,
\lambda_1\left(\frac{\partial\bm F}{\partial\bm U}\left(\bm U^+_{\jph,\ell,m}\right)\right),\,0\right\},\\
&a_{\jph,\ell,m}^+=\max\left\{\lambda_N\left(\frac{\partial\bm F}{\partial\bm U}\left(\bm U^-_{\jph,\ell,m}\right)\right),\,
\lambda_N\left(\frac{\partial\bm F}{\partial\bm U}\left(\bm U^+_{\jph,\ell,m}\right)\right),\,0\right\}.
\end{aligned}
\label{3.5}
\end{equation}
The source term on the RHS of \eref{3.2} is discretized similarly to \eref{2.7}:
\begin{equation}
\xbar{\bm S}_{j,\ell,m}=\frac{1}{\dx}\sum_{i,r=1}^M\mu_i\,\mu_r\,\nu(\xi_{\ell_i},\eta_{m_r})
\int\limits_{x_\jmh}^{x_\jph}\bm S\big(\bm U(x,\xi_{\ell_i},\eta_{m_r},t),x,\xi_{\ell_i},\eta_{m_r}\big)\,{\rm d}x.
\label{3.6}
\end{equation}
A particular WB quadrature for the integral in \eref{3.6} in the case of the Saint-Venant system will be specified in \S\ref{sec6}.

\subsubsection{Reconstruction of the Point Values}
The point values $\bm U^\pm_{\jph,\ell_i,m_r}$ used in \eref{3.3}--\eref{3.5} are obtained in two steps, as in \S\ref{sec221}, using a
second-order generalized minimod reconstruction in $x$ and a fifth-order Ai-WENO-Z interpolation in random space. In order to achieve the
fifth order of accuracy, we use the fifth-order Gauss-Legendre quadrature in \eref{3.3} and \eref{3.6} with $M=3$, the corresponding weights
\eref{2.11f}, nodes in $\xi$ given by \eref{2.12f}, and nodes $\eta_{m_r}$, $r=1,2,3$ with
\begin{equation*}
\eta_{m_1}=\eta_{m-\kappa},\quad\eta_{m_2}=\eta_m,\quad\eta_{m_3}=\eta_{m+\kappa}.
\end{equation*}

\paragraph{Reconstruction in Physical Space.} We first evaluate {the values
\begin{equation}
\begin{aligned}
&\bm U_{\jmh,\ell,m}^+=\frac{1}{\nu(\xi_\ell,\eta_m)}\Big[\,\xbar{\bm U}_{j,\ell,m}-\frac{\dx}{2}(\,\xbar{\bm U}_x)_{j,\ell,m}\Big],\\
&\bm U_{\jph,\ell,m}^-=\frac{1}{\nu(\xi_\ell,\eta_m)}\Big[\,\xbar{\bm U}_{j,\ell,m}+\frac{\dx}{2}(\,\xbar{\bm U}_x)_{j,\ell,m}\Big],
\end{aligned}
\label{3.7}
\end{equation}
where $(\,\xbar{\bm U}_x)_{j,\ell,m}\approx\nu(\xi_\ell,\eta_m)\,\bm U_x(x_j,\xi_\ell,\eta_m,t)$ computed using} the generalized minmod
limiter:
\begin{equation*}
{(\,\xbar{\bm U}_x)_{j,\ell,m}}={\rm minmod}\left(\theta\,\frac{\,\xbar{\bm U}_{j,\ell,m}-\,\xbar{\bm U}_{j-1,\ell,m}}{\dx},\,
\frac{\,\xbar{\bm U}_{j+1,\ell,m}-\,\xbar{\bm U}_{j-1,\ell,m}}{2\dx},\,\theta\,
\frac{\,\xbar{\bm U}_{j+1,\ell,m}-\,\xbar{\bm U}_{j,\ell,m}}{\dx}\right),
\end{equation*}
where the minmod function, defined in \eref{2.10}, is applied in a componentwise manner.

\paragraph{Interpolation in Random Space.} Equipped with $\bm U_{\jph,\ell,m}^\pm$, we proceed with the calculation of the remaining point
values $\bm U_{\jph,\ell\pm\kappa,m}^\pm$, $\bm U_{\jph,\ell,m\pm\kappa}^\pm$, and $\bm U_{\jph,\ell\pm\kappa,m\pm\kappa}^\pm$ using two
consecutive 1-D fifth-order Ai-WENO-Z interpolations. We will describe how to obtain the left-sided values
$\bm U_{\jph,\ell\pm\kappa,m}^-$, $\bm U_{\jph,\ell,m\pm\kappa}^-$, and $\bm U_{\jph,\ell\pm\kappa,m\pm\kappa}^-$ using
$\bm U_{\jph,\ell,m}^-$ and its neighboring values in both the $\xi$- and $\eta$-directions. The right-sided values
$\bm U_{\jph,\ell\pm\kappa,m}^+$, $\bm U_{\jph,\ell,m\pm\kappa}^+$, and $\bm U_{\jph,\ell\pm\kappa,m\pm\kappa}^+$ can be obtained similarly.

We proceed in the following two steps. (Recall that $U$ denotes any component of $\bm U$.)
\begin{asparaitem}
\addtolength{\itemsep}{0.25\baselineskip}
\item[\bf Step 1.] We apply the 1-D fifth-order Ai-WENO-Z interpolations described in \S\ref{sec222} in the $\xi$- and $\eta$-directions to
compute the values $\bm U_{\jph,\ell\pm\kappa,m}^-$ and $\bm U_{\jph,\ell,m\pm\kappa}^-$, respectively.
\item[\bf Step 2.] We compute the values $\bm U_{\jph,\ell-\kappa,m\pm\kappa}^-$ and $\bm U_{\jph,\ell+\kappa,m\pm\kappa}^-$ by applying the
1-D fifth-order Ai-WENO-Z interpolation in the $\eta$-direction to the sets of point values obtained in {\bf Step 1}:
\begin{equation*}
\left\{\bm U_{\jph,\ell-\kappa,m-2}^-,\,\bm U_{\jph,\ell-\kappa,m-1}^-,\,\bm U_{\jph,\ell-\kappa,m}^-,\,\bm U_{\jph,\ell-\kappa,m+1}^-,\,
\bm U_{\jph,\ell-\kappa,m+2}^-\right\}
\end{equation*}
and
\begin{equation*}
\left\{\bm U_{\jph,\ell+\kappa,m-2}^-,\,\bm U_{\jph,\ell+\kappa,m-1}^-,\,\bm U_{\jph,\ell+\kappa,m}^-,\,\bm U_{\jph,\ell+\kappa,m+1}^-,\,
\bm U_{\jph,\ell+\kappa,m+2}^-\right\},
\end{equation*}
respectively.
\end{asparaitem}
\begin{rmk}
Alternatively, in {\bf Step 2}, one can apply the interpolation in the $\xi$-direction to the sets of point values
\begin{equation*}
\left\{\bm U_{\jph,\ell-2,m-\kappa}^-,\,\bm U_{\jph,\ell-1,m-\kappa}^-,\,\bm U_{\jph,\ell,m-\kappa}^-,\,\bm U_{\jph,\ell+1,m-\kappa}^-,\,
\bm U_{\jph,\ell+2,m-\kappa}^-\right\}
\end{equation*}
and
\begin{equation*}
\left\{\bm U_{\jph,\ell-2,m+\kappa}^-,\,\bm U_{\jph,\ell-1,m+\kappa}^-,\,\bm U_{\jph,\ell,m+\kappa}^-,\,\bm U_{\jph,\ell+1,m+\kappa}^-,\,
\bm U_{\jph,\ell+2,m+\kappa}^-\right\},
\end{equation*}
to evaluate $\bm U_{\jph,\ell\pm\kappa,m-\kappa}^-$ and $\bm U_{\jph,\ell\pm\kappa,m+\kappa}^-$, respectively. We note that both
alternatives for {\bf Step 2} are equally accurate.
\end{rmk}
\begin{rmk}
Note that near the $\xi$- and $\eta$-boundaries, one needs to use one-sided Ai-WENO-Z interpolation in the corresponding direction, as
described in \S\ref{sec222}.
\end{rmk}

\subsection{Case $d=2$ and $s=1$}\label{sec32}
In this section, we consider the following version of the system \eref{1.1}:
\begin{equation}
\mU_t+\mF(\mU)_x+\bm G(\mU)_y=\mS(\mU,x,y,\xi),
\label{3.8}
\end{equation}
{
or, equivalently,
\begin{equation}
(\nu\mU)_t+\left[\nu\mF(\mU)\right]_x+\left[\nu\bm G(\mU)\right]_y=\nu\mS(\mU,x,y,\xi),
\label{3.9f}
\end{equation}
}
where $\mU=\mU(x,y,\xi,t)$ with $x$ and $y$ being the spatial variables.

In order to develop a finite-volume method for \eref{3.9f}, we introduce the cells
$C_{j,k,\ell}:=[x_\jmh,x_\jph]\times[y_\kmh,y_\kph]\times[\xi_\lmh,\xi_\lph]$, $j=1,\ldots,N_x$, $k=1,\ldots,N_y$, $\ell=1,\ldots,N_\xi$,
and evolve the {weighted cell averages}
\begin{equation*}
\xbar{\bm U}_{j,k,\ell}\approx\frac{1}{\dx\dy\dxi}\iiint\limits_{C_{j,k,\ell}}\bm U(x,y,\xi,t)\,\nu(\xi)\,{\rm d}x\,{\rm d}y\,{\rm d}\xi
\end{equation*}
in time according to the following semi-discretization of \eref{3.8}:
\begin{equation}
\frac{{\rm d}}{{\rm d}t}\,\xbar{\mU}_{j,k,\ell}=-\frac{\bm{{\cal F}}_{\jph,k,\ell}-\bm{{\cal F}}_{\jmh,k,\ell}}{\dx}
-\frac{\bm{{\cal G}}_{j,\kph,\ell}-\bm{{\cal G}}_{j,\kmh,\ell}}{\dy}+\xbar{\bm S}_{j,k,\ell},
\label{3.9}
\end{equation}
where $\bm{{\cal F}}_{\jpmh,k,\ell}$ and $\bm{{\cal G}}_{j,\kpmh,\ell}$ are the $x$- and $y$-directional numerical fluxes and
$\xbar{\bm S}_{j,k,\ell}$ are approximated cell averages of the source term.

\subsubsection{Numerical Fluxes and Source Terms}
Similarly to the $d=s=1$ case, the numerical fluxes in \eref{3.9} approximate the average of the corresponding fluxes in the
$\xi$-direction:
\begin{equation}
\begin{aligned}
&\bm{{\cal F}}_{\jph,k,\ell}=\sum_{i=1}^M\mu_i\,\nu(\xi_{\ell_i})\,\bm{{\cal F}}\big(\bm U^-_{\jph,k,\ell_i},\bm U^+_{\jph,k,\ell_i}\big),\\
&\bm{{\cal G}}_{j,\kph,\ell}=\sum_{i=1}^M\mu_i\,\nu(\xi_{\ell_i})\,\bm{{\cal G}}\big(\bm U^-_{j,\kph,\ell_i},\bm U^+_{j,\kph,\ell_i}\big).
\end{aligned}
\label{3.10}
\end{equation}
Here, $\bm{{\cal F}}$ and $\bm{{\cal G}}$ are the CU numerical fluxes from \cite{KTrp}:
\begin{equation}
\begin{aligned}
\bm{{\cal F}}\big(\bm U^-_{\jph,k,\ell_i},\bm U^+_{\jph,k,\ell_i}\big)&=\frac{a_{\jph,k,\ell}^
+\bm F\big(\bm U^-_{\jph,k,\ell_i}\big)-a_{\jph,k,\ell}^-\bm F\big(\bm U^+_{\jph,k,\ell_i}\big)}{a_{\jph,k,\ell}^+-a_{\jph,k,\ell}^-}\\
&+\frac{a_{\jph,k,\ell}^+a_{\jph,k,\ell}^-}{a_{\jph,k,\ell}^+-a_{\jph,k,\ell}^-}
\left(\bm U^+_{\jph,k,\ell_i}-\bm U^-_{\jph,k,\ell_i}\right),\\[0.5ex]
\bm{{\cal G}}\big(\bm U^-_{j,\kph,\ell_i},\bm U^+_{j,\kph,\ell_i}\big)&=\frac{a_{j,\kph,\ell}^
+\bm G\big(\bm U^-_{j,\kph,\ell_i}\big)-a_{j,\kph,\ell}^-\bm G\big(\bm U^+_{j,\kph,\ell_i}\big)}{a_{j,\kph,\ell}^+-a_{j,\kph,\ell}^-}\\
&+\frac{a_{j,\kph,\ell}^+a_{j,\kph,\ell}^-}{a_{j,\kph,\ell}^+-a_{j,\kph,\ell}^-}
\left(\bm U^+_{j,\kph,\ell_i}-\bm U^-_{j,\kph,\ell_i}\right),
\end{aligned}
\label{3.11}
\end{equation}
where $\bm U^\pm_{\jph,k,\ell_i}\approx\bm U(x_\jph\pm0,y_k,\xi_{\ell_i})$ and $\bm U^\pm_{j,\kph,\ell_i}\approx
\bm U(x_j,y_\kph\pm0,\xi_{\ell_i})$, and $a_{\jph,k,\ell}^\pm$ and $a_{j,\kph,\ell}^\pm$ are the $x$- and $y$-directional one-sided local
speeds of propagation at $(x_\jph,y_k,\xi_\ell)$ and $(x_j,k_\kph,\xi_\ell)$, respectively. They can be estimated using the smallest and
largest eigenvalues of the Jacobians $\frac{\partial\bm F}{\partial\bm U}$ and $\frac{\partial\bm G}{\partial\bm U}$:
\begin{equation}
\begin{aligned}
&a_{\jph,k,\ell}^-=\min\left\{\lambda_1\left(\frac{\partial\bm F}{\partial\bm U}\left(\bm U^-_{\jph,k,\ell}\right)\right),\,
\lambda_1\left(\frac{\partial\bm F}{\partial\bm U}\left(\bm U^+_{\jph,k,\ell}\right)\right),\,0\right\},\\
&a_{\jph,k,\ell}^+=\max\left\{\lambda_N\left(\frac{\partial\bm F}{\partial\bm U}\left(\bm U^-_{\jph,k,\ell}\right)\right),\,
\lambda_N\left(\frac{\partial\bm F}{\partial\bm U}\left(\bm U^+_{\jph,k,\ell}\right)\right),\,0\right\},\\
&a_{j,\kph,\ell}^-=\min\left\{\lambda_1\left(\frac{\partial\bm G}{\partial\bm U}\left(\bm U^-_{j,\kph,\ell}\right)\right),\,
\lambda_1\left(\frac{\partial\bm G}{\partial\bm U}\left(\bm U^+_{j,\kph,\ell}\right)\right),\,0\right\},\\
&a_{j,\kph,\ell}^+=\max\left\{\lambda_N\left(\frac{\partial\bm G}{\partial\bm U}\left(\bm U^-_{j,\kph,\ell}\right)\right),\,
\lambda_N\left(\frac{\partial\bm G}{\partial\bm U}\left(\bm U^+_{j,\kph,\ell}\right)\right),\,0\right\}.
\end{aligned}
\label{3.12}
\end{equation}
Finally, the source term on the RHS of \eref{3.9} is discretized as in \eref{2.7}:
\begin{equation}
\xbar{\bm S}_{j,k,\ell}=\frac{1}{\dx\dy}\sum_{i=1}^M\mu_i\,\nu(\xi_{\ell_i})\int\limits_{y_\kmh}^{y_\kph}\int\limits_{x_\jmh}^{x_\jph}
\bm S\big(\bm U(x,y,\xi_{\ell_i},t),x,y,\xi_{\ell_i}\big)\,{\rm d}x\,{\rm d}y.
\label{3.13}
\end{equation}
A particular quadrature for the integrals on the RHS of \eref{3.13} in the case of the Saint-Venant system will be specified in
\S\ref{sec6}.

\subsubsection{Reconstruction of the Point Values}
The point values $\bm U^\pm_{\jph,k,\ell_i}$ and $\bm U^\pm_{j,\kph,\ell_i}$ used in \eref{3.10}--\eref{3.12} are computed as follows.

\paragraph{Reconstruction in Physical Space.} We first use the generalized minmod limiter to evaluate
{
\begin{equation*}
\begin{aligned}
\bm U_{\jmh,k,\ell}^+=\frac{1}{\nu(\xi_\ell)}\Big[\,\xbar{\bm U}_{j,k,\ell}-\frac{\dx}{2}(\,\xbar{\bm U}_x)_{j,k,\ell}\Big],\quad
\bm U_{\jph,k,\ell}^-=\frac{1}{\nu(\xi_\ell)}\Big[\,\xbar{\bm U}_{j,k,\ell}+\frac{\dx}{2}(\,\xbar{\bm U}_x)_{j,k,\ell}\Big],\\
\bm U_{j,\kmh,\ell}^+=\frac{1}{\nu(\xi_\ell)}\Big[\,\xbar{\bm U}_{j,k,\ell}-\frac{\dy}{2}(\,\xbar{\bm U}_y)_{j,k,\ell}\Big],\quad
\bm U_{j,\kph,\ell}^-=\frac{1}{\nu(\xi_\ell)}\Big[\,\xbar{\bm U}_{j,k,\ell}+\frac{\dy}{2}(\,\xbar{\bm U}_y)_{j,k,\ell}\Big],
\end{aligned}
\end{equation*}
where $(\,\xbar{\bm U}_x)_{j,k,\ell}\approx\nu(\xi_\ell)\,\bm U_x(x_j,y_k,\xi_\ell,t)$ and
$(\,\xbar{\bm U}_y)_{j,k,\ell}\approx\nu(\xi_\ell)\,\bm U_y(x_j,y_k,\xi_\ell,t)$ computed using} the generalized minmod limiter:
\begin{equation*}
\begin{aligned}
&{(\,\xbar{\bm U}_x)_{j,k,\ell}}={\rm minmod}\left(\theta\,\frac{\,\xbar{\bm U}_{j,k,\ell}-\,\xbar{\bm U}_{j-1,k,\ell}}{\dx},\,
\frac{\,\xbar{\bm U}_{j+1,k,\ell}-\,\xbar{\bm U}_{j-1,k,\ell}}{2\dx},\, \theta\,\frac{\,\xbar{\bm U}_{j+1,k,\ell}-\,
\xbar{\bm U}_{j,k,\ell}}{\dx}\right),\\
&{(\,\xbar{\bm U}_y)_{j,k,\ell}}={\rm minmod}\left(\theta\,\frac{\,\xbar{\bm U}_{j,k,\ell}-\,\xbar{\bm U}_{j,k-1,\ell}}{\dy},\,
\frac{\,\xbar{\bm U}_{j,k+1,\ell}-\,\xbar{\bm U}_{j,k-1,\ell}}{2\dy},\, \theta\,\frac{\,\xbar{\bm U}_{j,k+1,\ell}-\,
\xbar{\bm U}_{j,k,\ell}}{\dy}\right),
\end{aligned}
\end{equation*}
where the minmod function, defined in \eref{2.10}, is applied in a componentwise manner.

\paragraph{Interpolation in Random Space.} Second, equipped with $\big\{\bm U_{\jph,k,\ell}^\pm\big\}$ and
$\big\{\bm U_{j,\kph,\ell}^\pm\big\}$, the solution values at the Gauss-Legendre points are obtained $(x_\jph,y_k,\xi_{\ell\pm\kappa})$ and
$(x_j,y_\kph,\xi_{\ell\pm\kappa})$ using the fifth-order Ai-WENO-Z interpolant in $\xi$ precisely as described in \S\ref{sec222}.

\section{Time Evolution}\label{sec4}
The systems of ordinary differential equations (ODEs) \eref{2.2}, \eref{3.2}, and \eref{3.9} have to be numerically integrated using an
appropriate ODE solver. In this paper, we use the three-stage third-order strong stability preserving (SSP) Runge-Kutta method; see, e.g.,
\cite{GKS,GST}. The method has been implemented with the CFL number $K=0.45$.

\subsection{Case $d=1$ and $s=1$}\label{sec41}
A fully discrete version of the CU scheme will be, in general, stable if the following CFL condition is satisfied:
\begin{equation}
\dt=\frac{K\dx}{\max\limits_{j,\ell}\left(\max\big\{a_{\jph,\ell}^+,-a_{\jph,\ell}^-\big\}\right)},\quad K\le\hf.
\label{4.1}
\end{equation}
The CFL condition \eref{4.1}, however, does not guarantee that components of the computed $\bm U$, which are supposed to be non-negative,
remain non-negative. As before, we denote one of those components of $\bm U$ by $U$, assume that the corresponding component of the source
term $S\equiv0$, and adopt the ``draining'' timestep technique originally introduced in \cite{BNL}.

We begin with a forward Euler discretization of this component of \eref{2.2},
\begin{equation}
\xbar U_{j,\ell}(t+\dt)=\xbar U_{j,\ell}(t)-\frac{\dt}{\dx}\left({\cal F}_{\jph,\ell}(t)-{\cal F}_{\jmh,\ell}(t)\right),
\label{4.2}
\end{equation}
and modify the numerical fluxes on the RHS of \eref{4.2} to ensure $\,\xbar U_{j,\ell}(t+\dt)\ge0$ as long as $\,\xbar U_{j,\ell}(t)\ge0$
for all $j,\ell$. To this end, we compute
\begin{equation*}
\dt^{\rm drain}_{j,\ell}=\frac{\dx\,\xbar U_{j,\ell}(t)}{\max\big\{0,{\cal F}_{\jph,\ell}(t)\big\}+\max\big\{0,-{\cal F}_{\jmh,\ell}(t)
\big\}},
\end{equation*}
introduce the following quantities:
\begin{equation*}
\dt_{\jph,\ell}=\min\big\{\dt,\dt_{j_0,\ell}^{\rm drain}\big\},\quad
{j_0=\left\{\begin{aligned}&j,&&{\cal F}_{\jph,\ell}(t)>0,\\&j+1,&&\mbox{otherwise},\end{aligned}\right.}
\end{equation*}
and replace \eref{4.2} with
\begin{equation}
\xbar U_{j,\ell}(t+\dt)=\,\xbar U_{j,\ell}(t)-\frac{\dt_{\jph,\ell}\,{\cal F}_{\jph,\ell}(t)-\dt_{\jmh,\ell}\,{\cal F}_{\jmh,\ell}(t)}{\dx}.
\label{4.3}
\end{equation}
Notice that the last equation can also be obtained by replacing the numerical fluxes in \eref{4.2} with
$\frac{\dt_{\jph,\ell}}{\dt}\,{\cal F}_{\jph,\ell}(t)$. One can show that using \eref{4.3} ensures the non-negativity of
$\,\xbar U_{j,\ell}(t+\dt)$. An extension to the three-stage third-order SSP Runge-Kutta method is straightforward: the numerical flux
modification is carried out at every stage.

\subsection{Case $d=1$ and $s=2$}\label{sec42}
In this case, a similar time evolution as in \S\ref{sec41} can be used, with $\ell$ now being the multi-index $(\ell,m)$.

\subsection{Case $d=2$ and $s=1$}\label{sec43}
A fully discrete version of the CU scheme will be, in general, stable if the following CFL condition is satisfied:
\begin{equation*}
\dt=K\min\Bigg\{\frac{\dx}{\max\limits_{j,k,\ell}\left(\max\big\{a_{\jph,k,\ell}^+,-a_{\jph,k,\ell}^-\big\}\right)},
\frac{\dy}{\max\limits_{j,k,\ell}\left(\max\big\{a_{j,\kph,\ell}^+,-a_{j,\kph,\ell}^-\big\}\right)}\Bigg\},~~K\le\hf.
\end{equation*}
As in the $d=s=1$ case, we first introduce the ``draining'' timestep, which is now
\begin{equation*}
\dt^{\rm drain}_{j,k,\ell}=\frac{\dx\,\dy\,\xbar U_{j,k,\ell}}{f^{\rm drain}_{j,k,\ell}\dy+g^{\rm drain}_{j,k,\ell}\dx},
\end{equation*}
where
\begin{equation*}
\begin{aligned}
&f^{\rm drain}_{j,k,\ell}=\max\big\{0,{\cal F}_{\jph,k,\ell}\big\}+\max\big\{0,-{\cal F}_{\jmh,k,\ell}\big\},\\
&g^{\rm drain}_{j,k,\ell}=\max\big\{0,{\cal G}_{j,\kph,\ell}\big\}+\max\big\{0,-{\cal G}_{j,\kmh,\ell}\big\}.
\end{aligned}
\end{equation*}
We then introduce the following quantities:
\begin{equation*}
\begin{aligned}
&\dt_{\jph,k,\ell}=\min\big\{\dt,\dt_{j_0,k,\ell}^{\rm drain}\big\},\quad
{j_0=\left\{
\begin{aligned}
&j,&&{\cal F}_{\jph,k,\ell}(t)>0,\\&j+1,&&\mbox{otherwise},
\end{aligned}
\right.}\\
&\dt_{j,\kph,\ell}=\min\big\{\dt,\dt_{j,k_0,\ell}^{\rm drain}\big\},\quad
{k_0=\left\{
\begin{aligned}
&k,&&{\cal G}_{j,\kph,\ell}(t)>0,\\&k+1,&&\mbox{otherwise},
\end{aligned}
\right.}
\end{aligned}
\end{equation*}
and use them to replace the corresponding component of the numerical fluxes in \eref{3.9} with
$\frac{\dt_{\jph,k,\ell}}{\dt}\,{\cal F}_{\jph,k,\ell}$ and $\frac{\dt_{j,\kph,\ell}}{\dt}\,{\cal G}_{j,\kph,\ell}$.
\begin{rmk}
We note that in this paper, the ``draining'' timestep technique has been applied to the water depth component of the Saint-Venant system in
the numerical examples reported in \S\ref{sec64}. This helps to prevent the appearance of discrete negative water
depth values.
\end{rmk}

\section{Euler Equations of Gas Dynamics}\label{sec5}
In this section, we illustrate the performance of the proposed numerical method on three examples for the Euler equations of gas dynamics.
In all of the numerical examples below, we take the minmod parameter $\theta=1.3$.

\paragraph{Example 1 ($d=s=1$).} We start with the 1-D case, in which the Euler equations of gas dynamics read as \eref{2.1} with
\begin{equation}
\bm U=(\rho,\rho u,E)^\top,\quad\bm F(\bm U)=(\rho u,\rho u^2+p,u(E+p))^\top,\quad\bm S\equiv\bm0.
\label{5.1}
\end{equation}
Here, $\rho(x,t,\xi)$ is the density, $u(x,t,\xi)$ is the velocity, $E(x,t,\xi)$ is the total energy, and $p(x,t,\xi)$ is the pressure,
which is related to the conservative quantities through the equation of state (EOS) for the ideal gas:
\begin{equation}
p=(\gamma-1)\left[E-\frac{\rho u^2}{2}\right],
\label{5.2}
\end{equation}
where $\gamma$ is the specific heat ratio.

We consider the Sod shock tube problem with $\gamma=1.4$, the Riemann ICs,
\begin{equation}
(\rho(x,0),u(x,0),p(x,0))=\left\{\begin{aligned}
&(1,0,1),&&x<0.5,\\
&(0.125,0,0.1),&&x>0.5,
\end{aligned}\right.
\label{5.3}
\end{equation}
and free BCs, prescribed in the spatial domain $x\in[0,1]$.

\smallskip
\noindent {\bf Test 1.} We first perturb the initial density and replace $\rho(x,0)$ in \eref{5.3} with
\begin{equation*}
\rho(x,0,\xi)=\left\{\begin{aligned}
&1+\sigma\xi,&&x<0.5,\\
&0.125,&&x>0.5.
\end{aligned}\right.
\end{equation*}
We compute the solution until time $t=0.1644$ on a uniform mesh with $\dx=1/200$ and $\dxi=1/50$. The mean, 95\%-quantile, and standard
deviation of $\rho$ obtained for $\sigma=0.1$, 0.2, 0.3, and 0.4 {and uniformly distributed $\xi\sim{\cal U}(-1,1)$, that is,
$\nu(\xi)\equiv\hf$}, are presented in \fref{fig51}. As one can see, a larger initial perturbation of $\rho$ yields a larger variance and
$95\%$-quantile.
\begin{figure}[ht!]
\centerline{\includegraphics[width=0.32\textwidth]{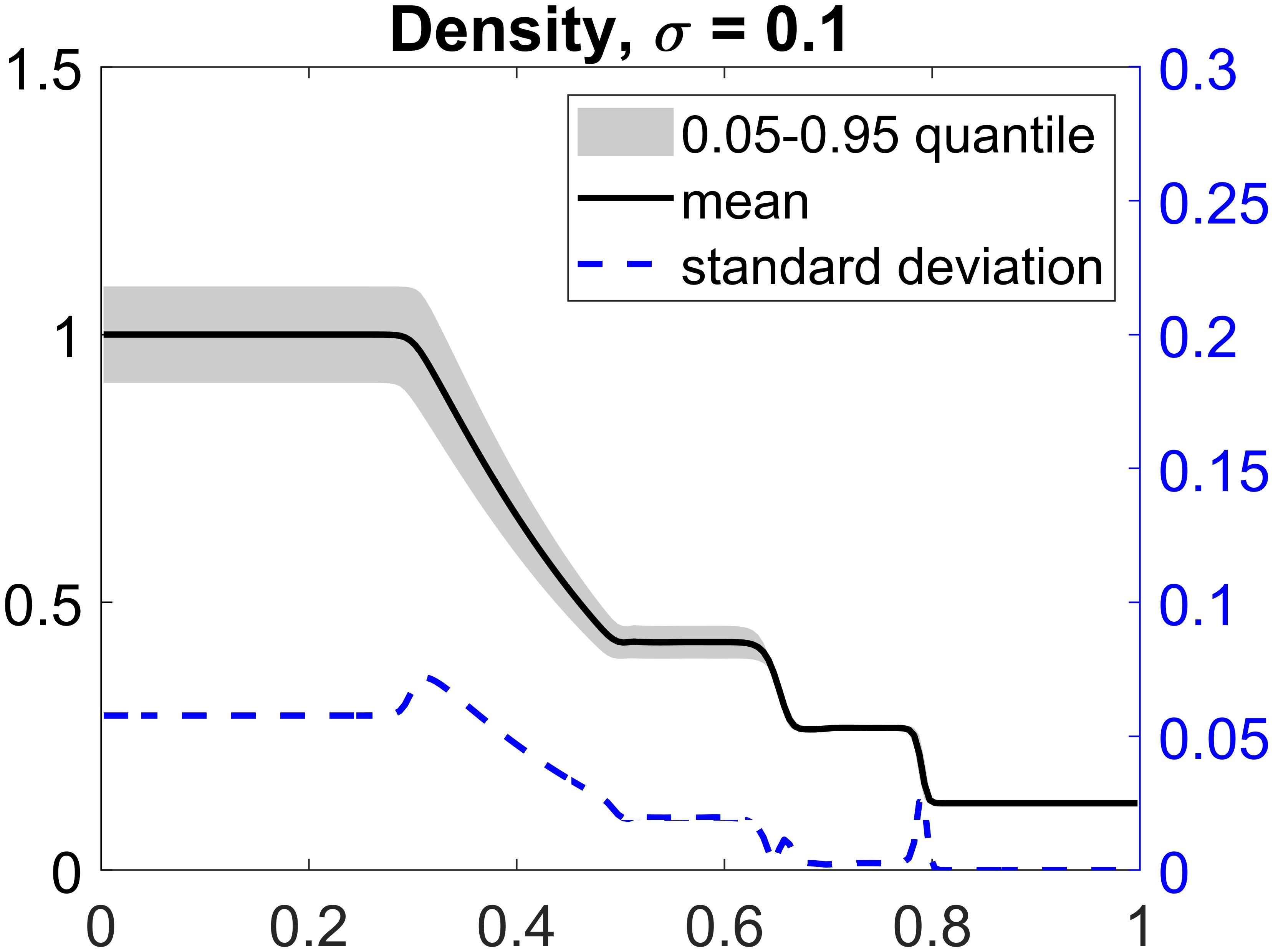}\hspace*{0.5cm}\includegraphics[width=0.32\textwidth]{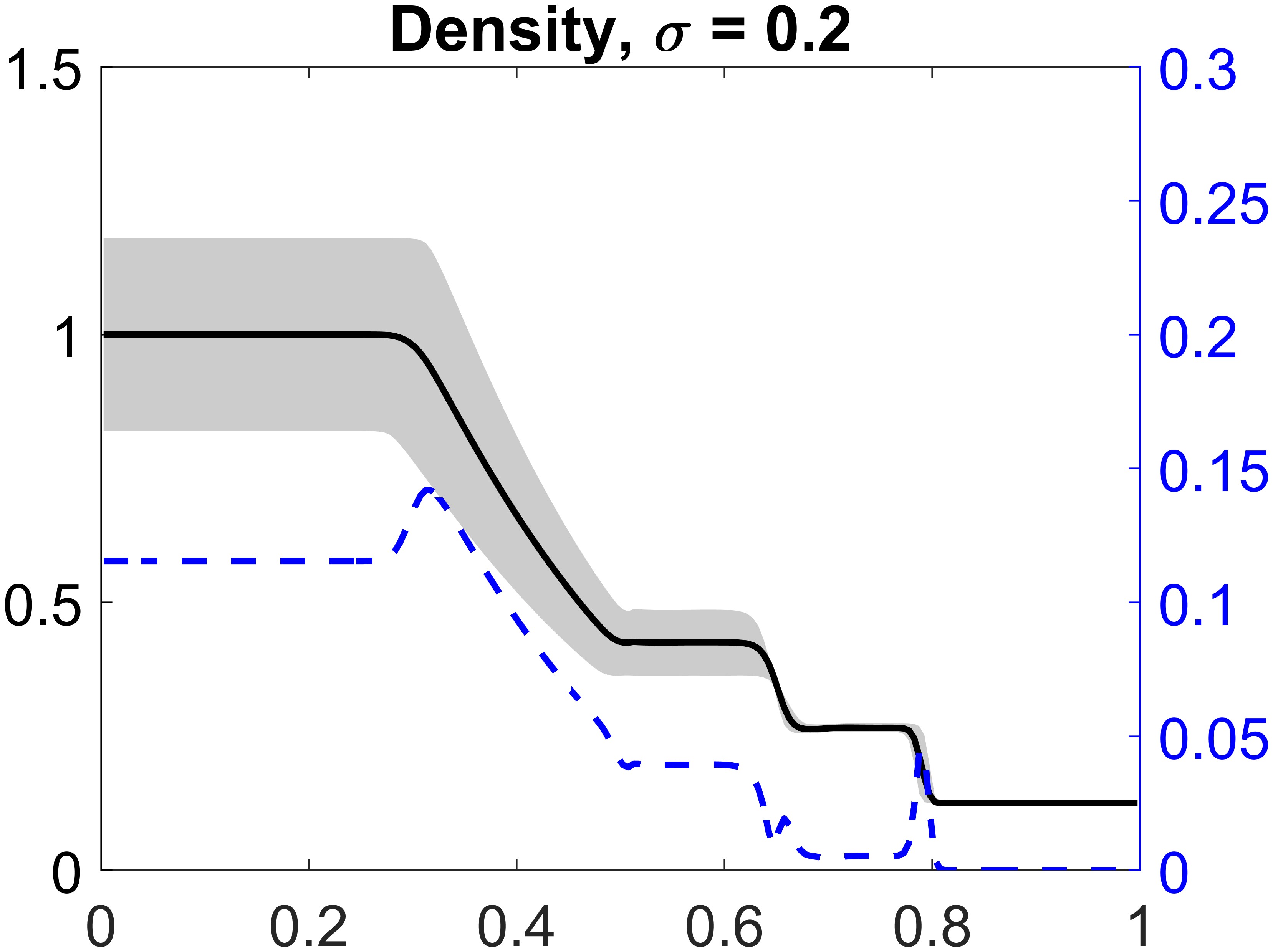}}
\vskip8pt
\centerline{\includegraphics[width=0.32\textwidth]{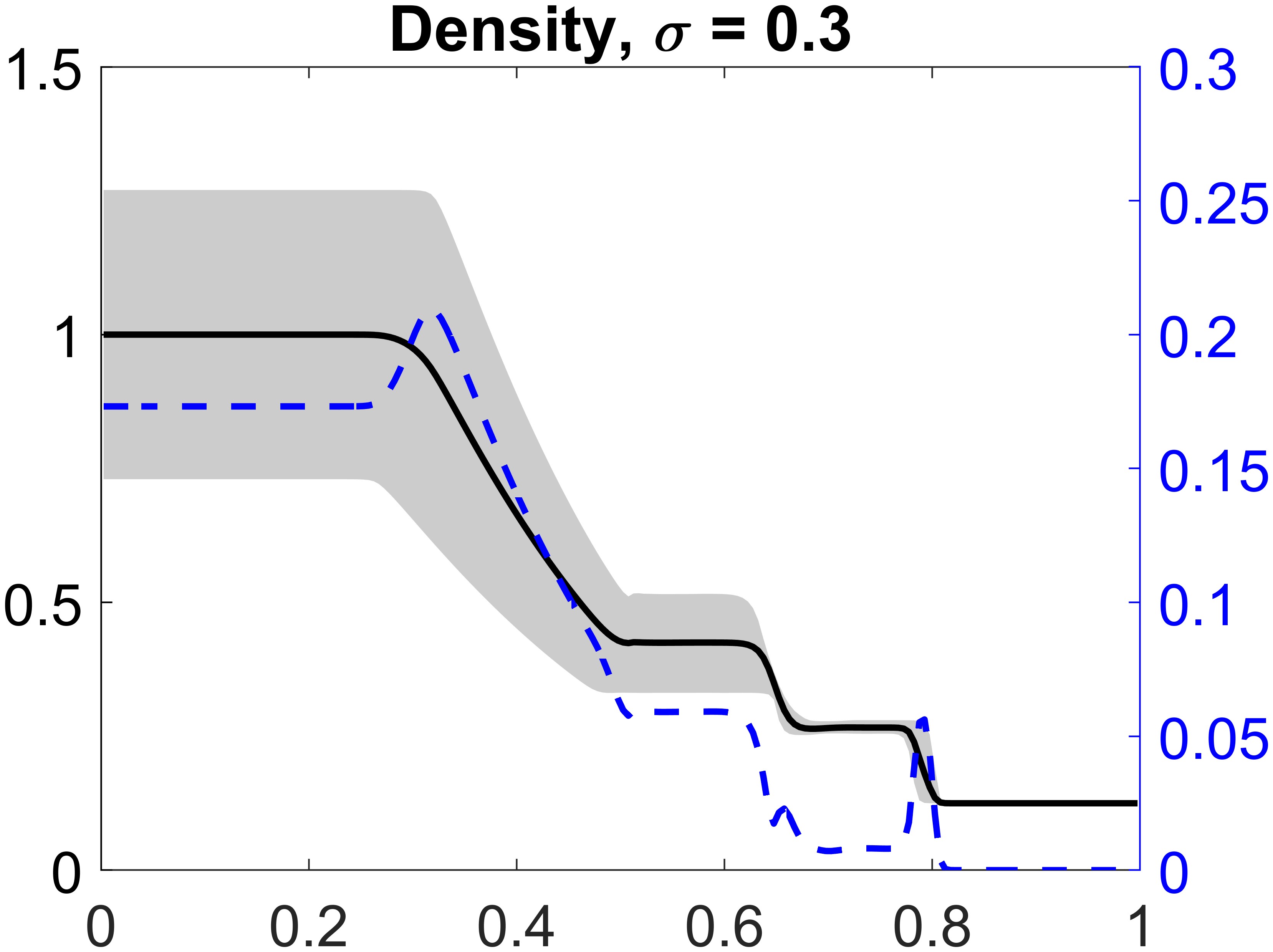}\hspace*{0.5cm}\includegraphics[width=0.32\textwidth]{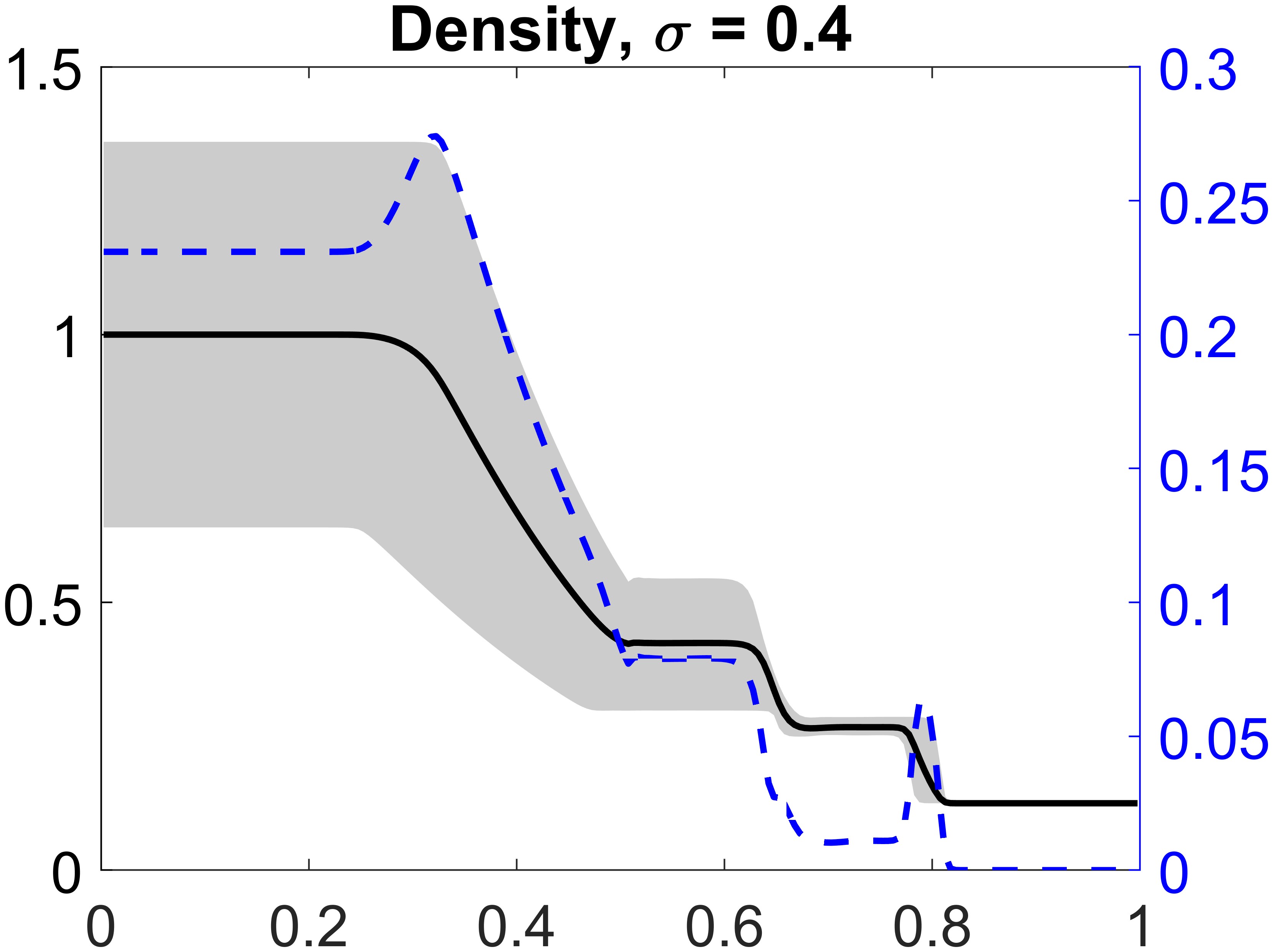}}
\caption{\sf Example 1, Test 1: Mean, 95\%-quantile, and standard deviation of $\rho$ for {$\xi\sim{\cal U}(-1,1)$ and} different
$\sigma$.\label{fig51}}
\end{figure}

{
We also perform analogous simulations assuming that the random variable $\xi$ is normally distributed, $\xi\sim{\cal N}(0,1/36)$, that is,
$\nu(\xi)=\sqrt{\frac{18}{\pi}}{\rm e}^{-18\xi^2}$, and plot
the results in \fref{fig51a}. As one can see from Figures \ref{fig51} and \ref{fig51a}, the mean values obtained with the uniform and normal
distributions are similar, while the standard deviation for the normally distributed random variable has smaller values since $\xi$ is more
concentrated around 0.
}
\begin{figure}[ht!]
\centerline{\includegraphics[width=0.32\textwidth]{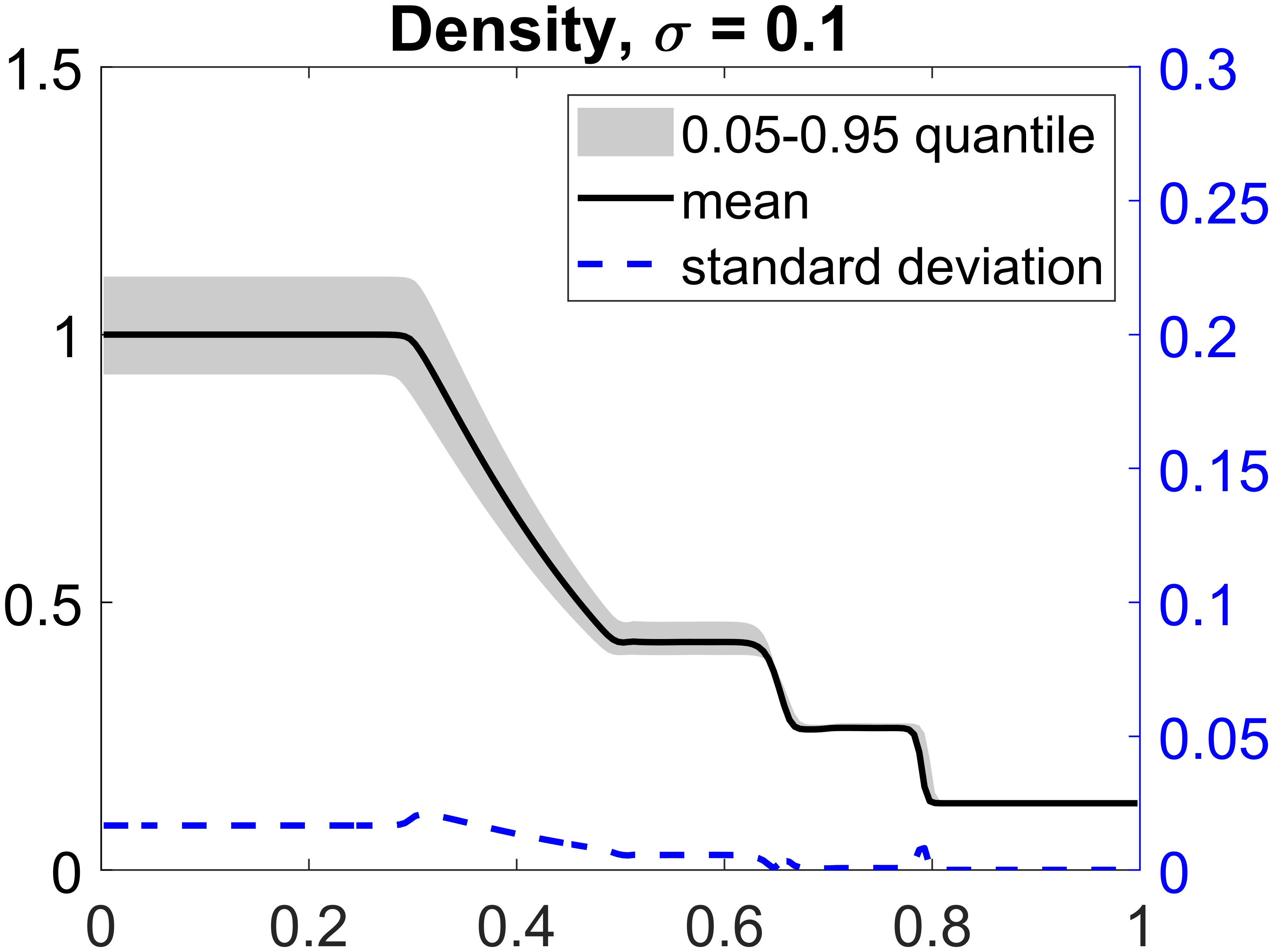}\hspace*{0.5cm}
\includegraphics[width=0.32\textwidth]{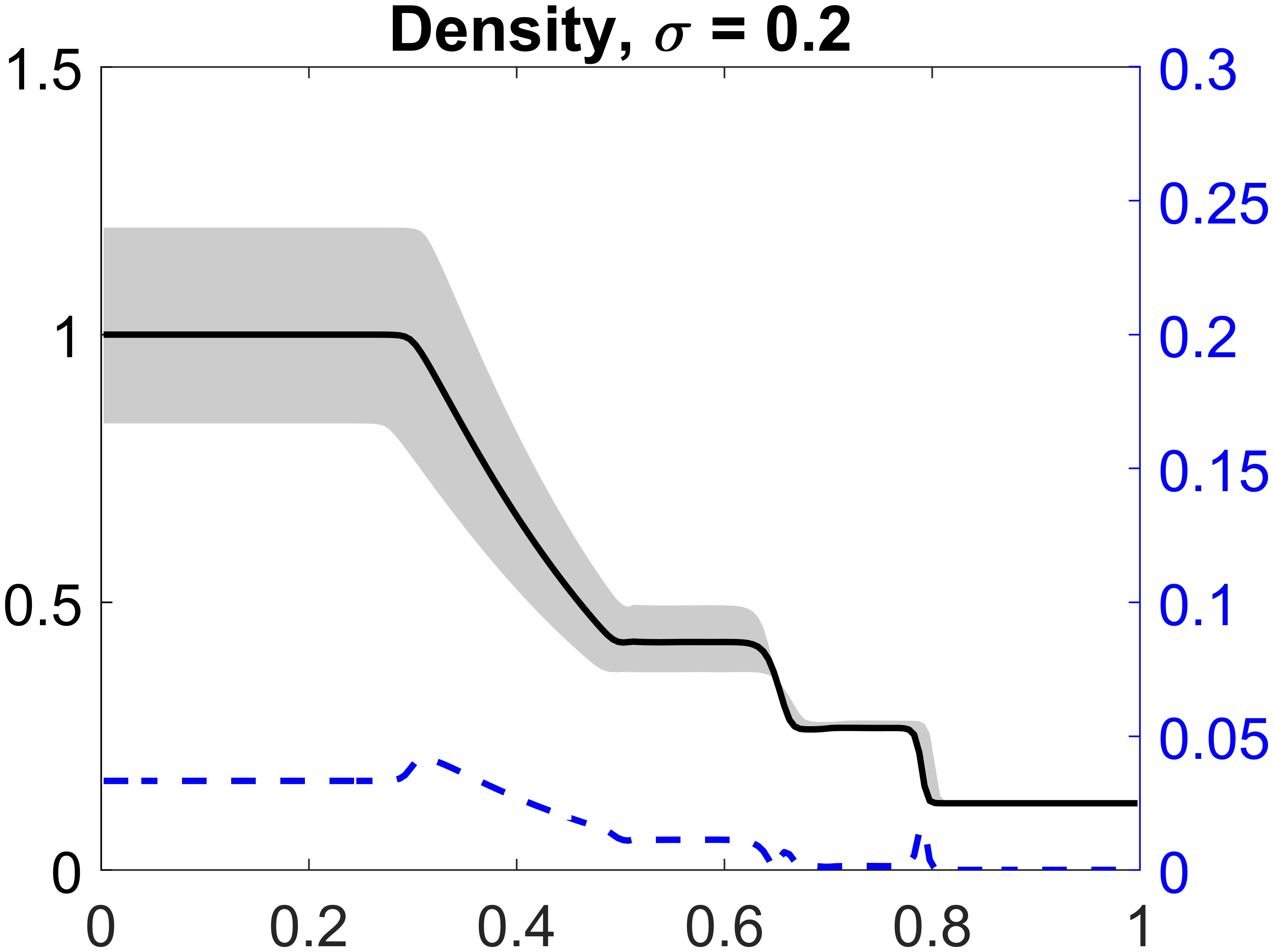}}
\vskip8pt
\centerline{\includegraphics[width=0.32\textwidth]{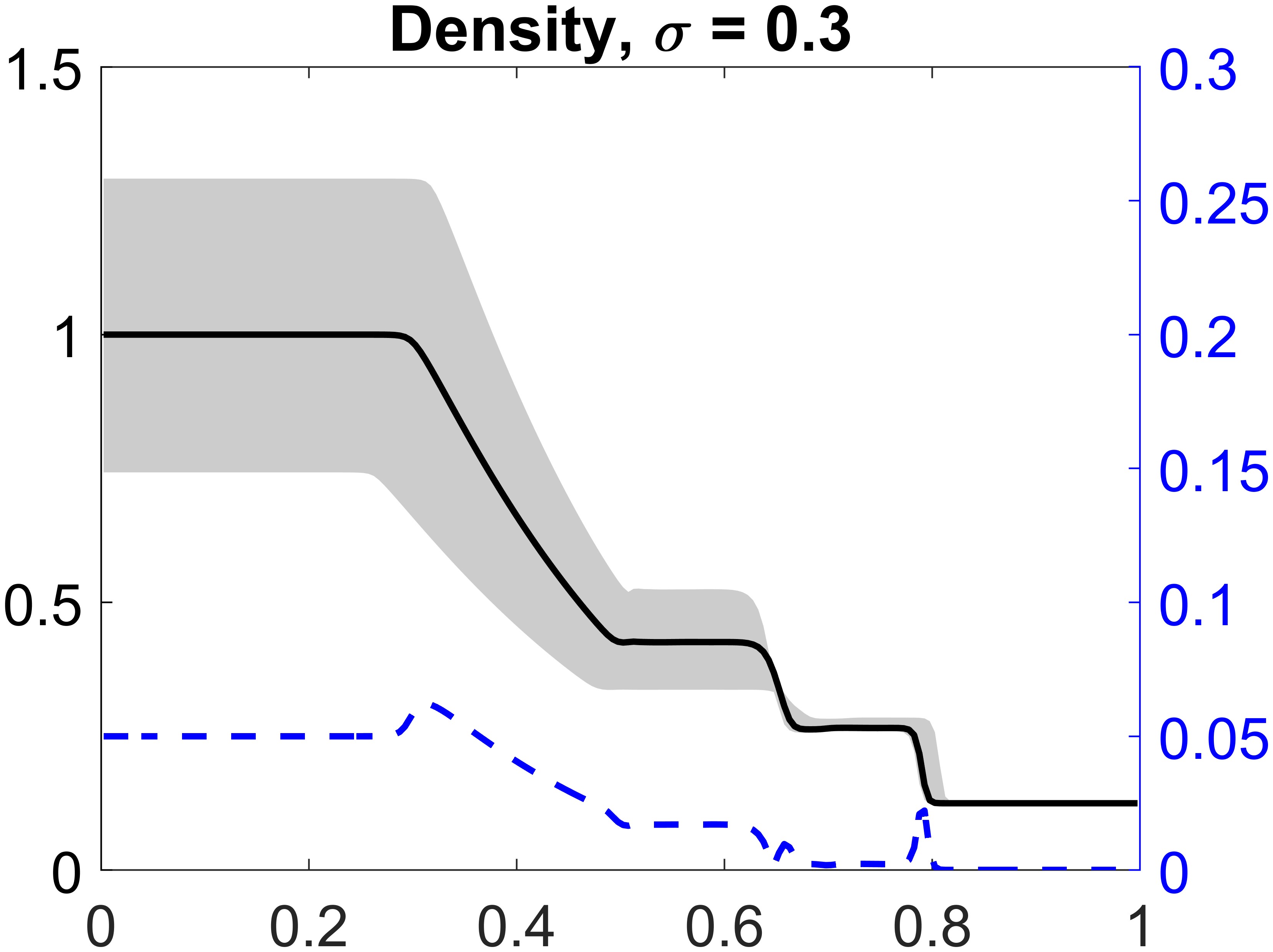}\hspace*{0.5cm}
\includegraphics[width=0.32\textwidth]{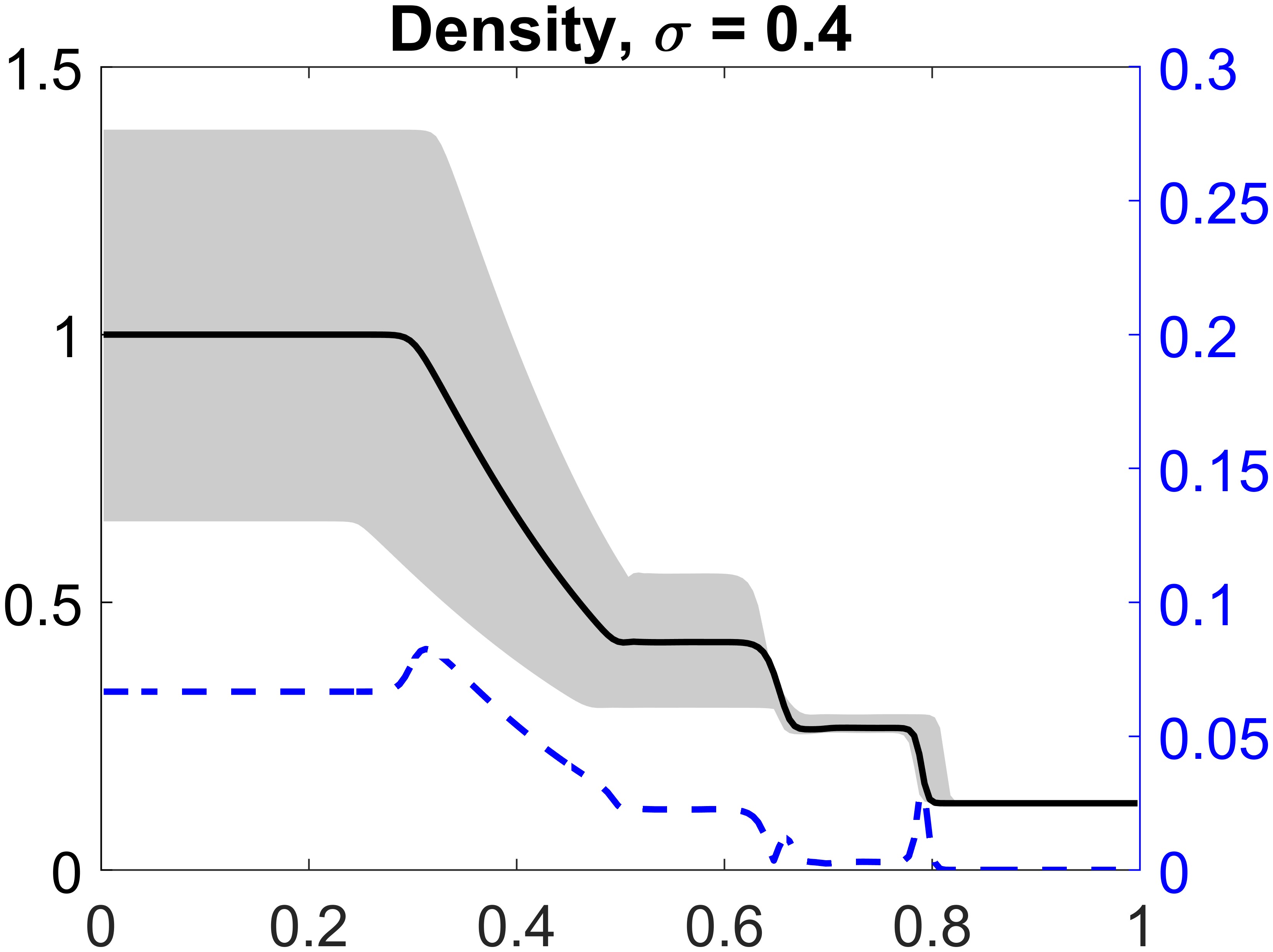}}
\caption{\sf Example 1, Test 1: {Same as in \fref{fig51}, but for $\xi\sim{\cal N}(0,1/36)$}.\label{fig51a}}
\end{figure}

It is also instructive to observe that the $95\%$-quantile in the momentum is distributed over a slightly different domain, while the
$95\%$-quantile is almost invisible in the pressure and total energy; {see Figures \ref{fig52} and \ref{fig52a}}, where the
corresponding results for $\sigma=0.1$ are plotted.
\begin{figure}[ht!]
\centerline{\hspace*{-0.1cm}\includegraphics[width=0.31\textwidth]{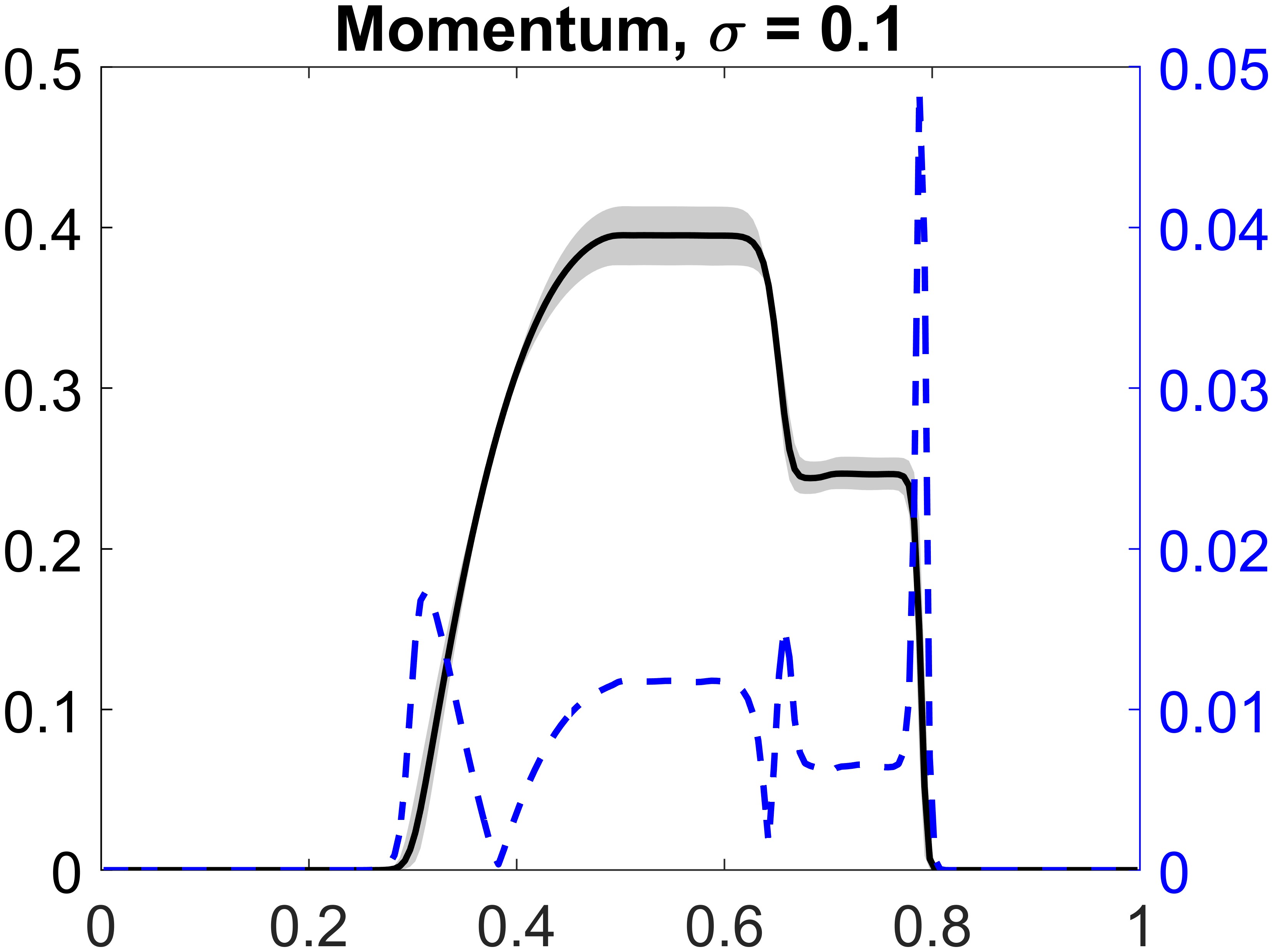}\hspace*{0.5cm}
\includegraphics[width=0.31\textwidth]{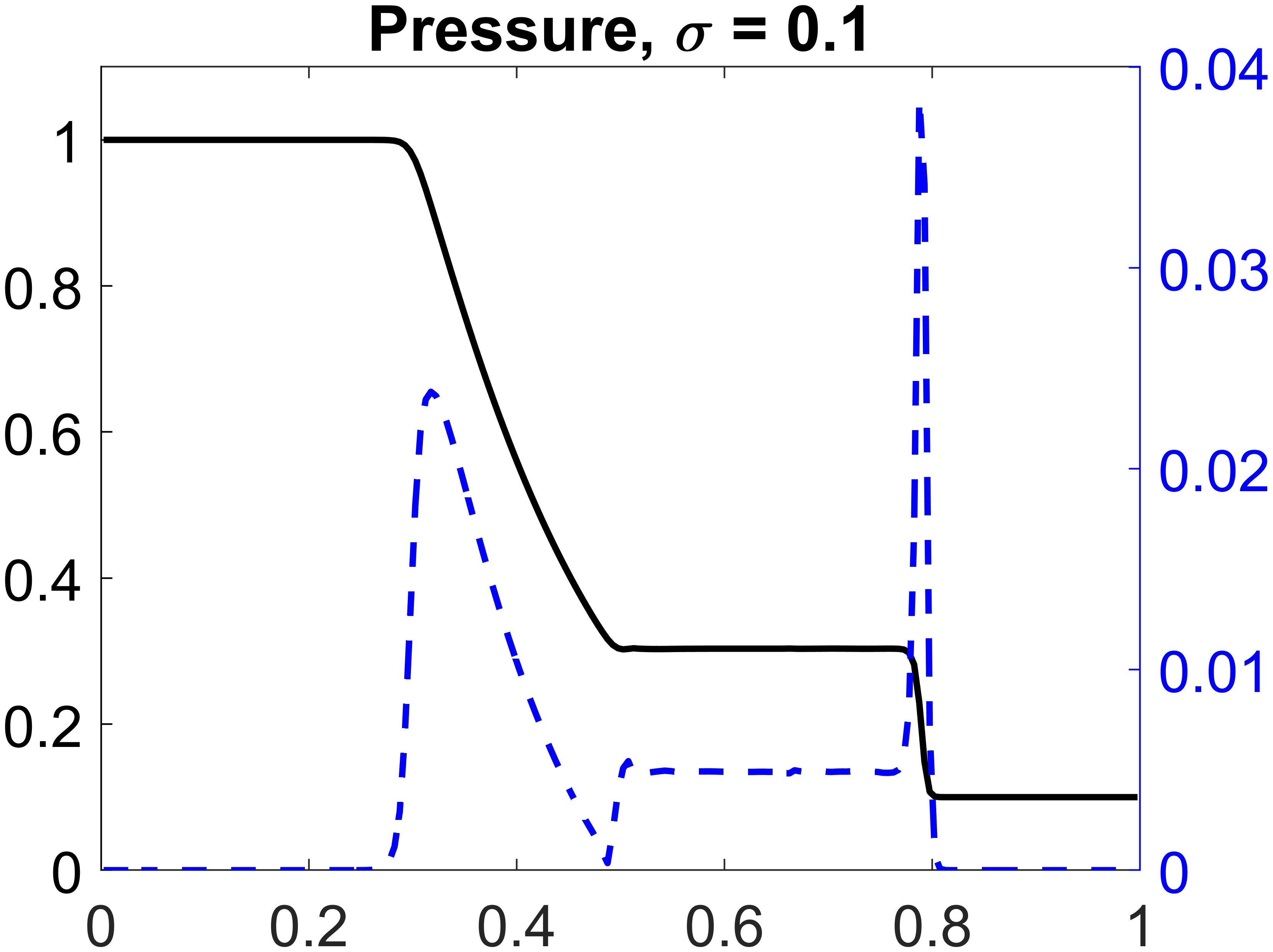}\hspace*{0.5cm}\includegraphics[width=0.31\textwidth]{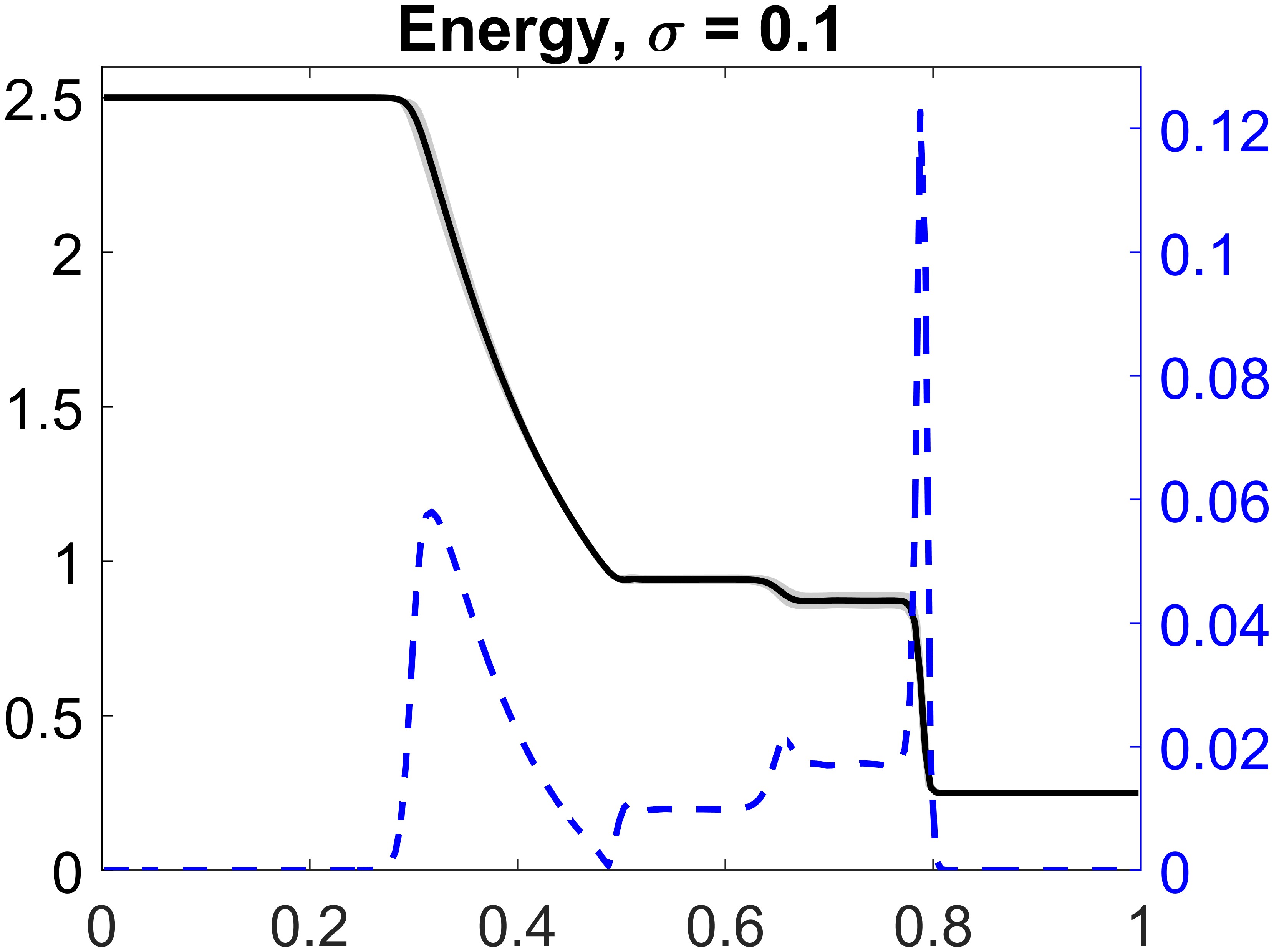}}
\caption{\sf Example 1, Test 1: Mean, 95\%-quantile, and standard deviation of $\rho u$, $p$, and $E$ for $\sigma=0.1$ {and
$\xi\sim{\cal U}(-1,1)$.}\label{fig52}}
\end{figure}
\begin{figure}[ht!]
\centerline{\hspace*{-0.1cm}\includegraphics[width=0.31\textwidth]{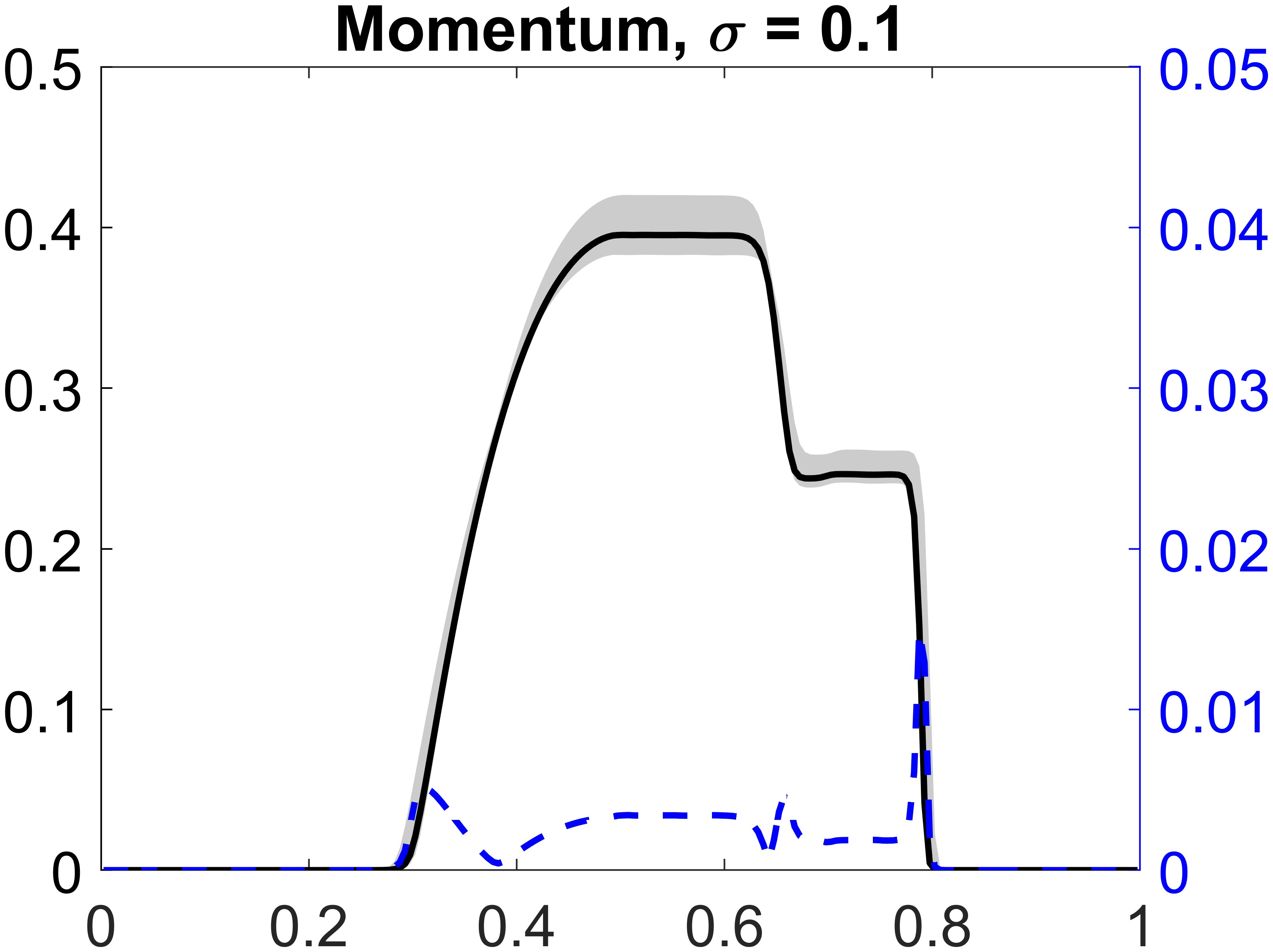}\hspace*{0.5cm}
\includegraphics[width=0.31\textwidth]{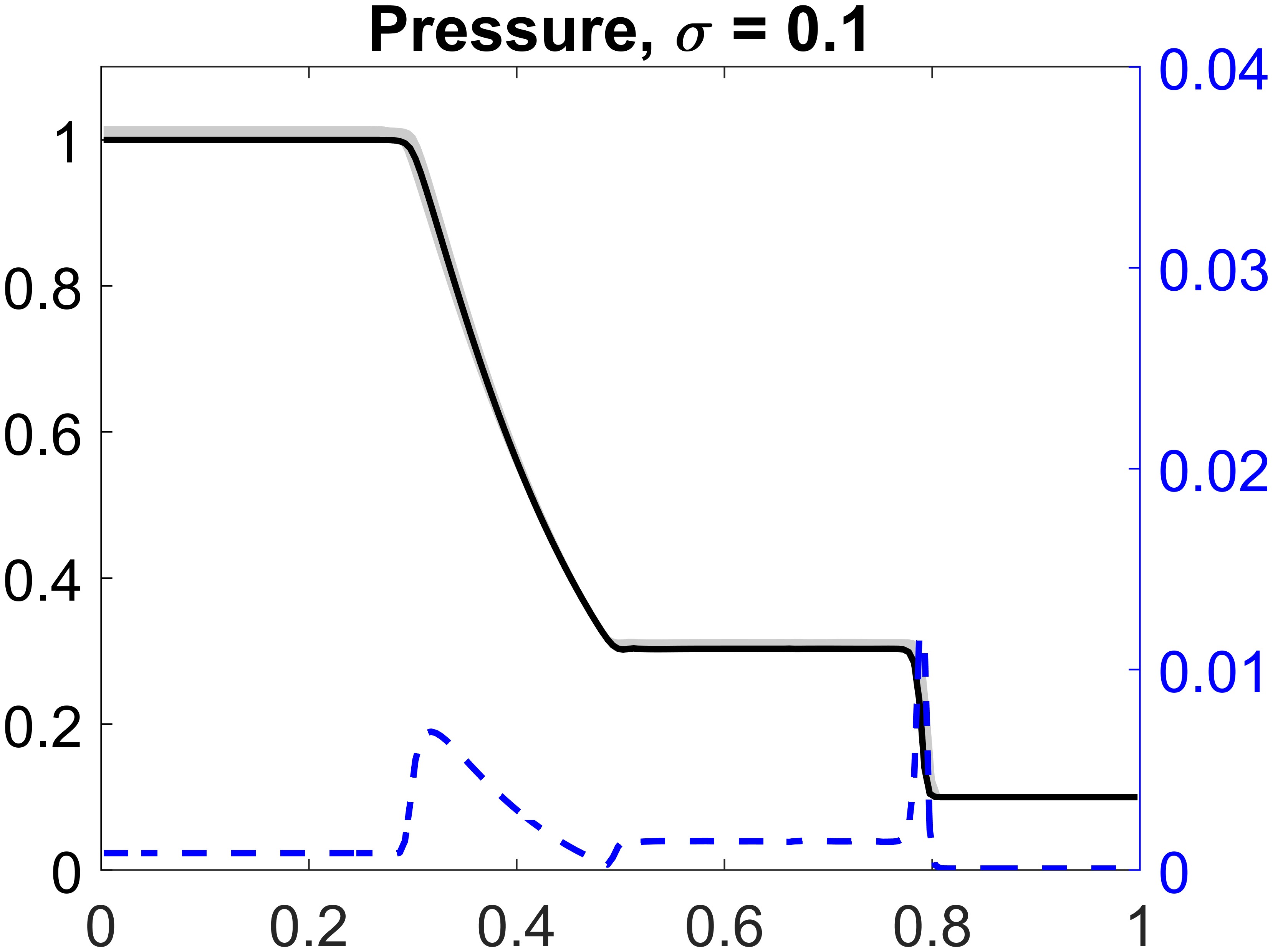}\hspace*{0.5cm}\includegraphics[width=0.31\textwidth]{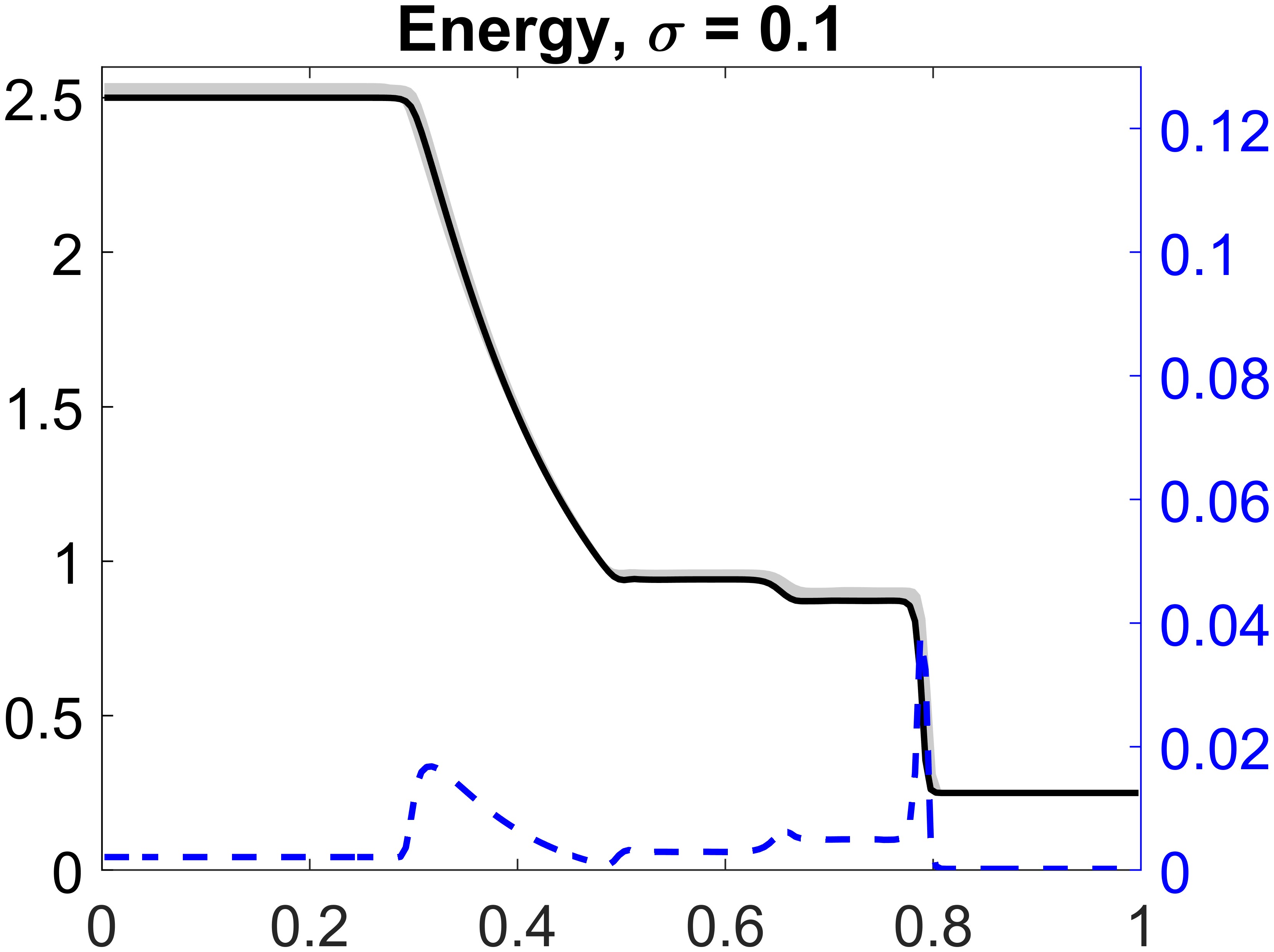}}
\caption{\sf Example 1, Test 1: {Same as in \fref{fig52}, but for $\xi\sim{\cal N}(0,1/36)$}.\label{fig52a}}
\end{figure}

\smallskip
\noindent {\bf Test 2.} Next, we consider the original Sod shock tube problem \eref{5.3} with the uncertainty introduced in the adiabatic
constant $\gamma$, which is now $\gamma(\xi)=1.4+0.1\xi$, {$\xi\sim{\cal U}(-1,1)$}. It is easy to see that the perturbation of
$\gamma$ affects the initial total energy only, and thus, one can expect that this perturbation will have the largest influence on the total
energy. Indeed, this is true, as one can see in \fref{fig53}, where we plot the mean, $95\%$ quantile, and standard deviation of $\rho$,
$\rho u$, and $E$ computed on a uniform mesh with $\dx=1/200$ and $\dxi=1/50$.
\begin{figure}[ht!]
\centerline{\includegraphics[width=0.32\textwidth]{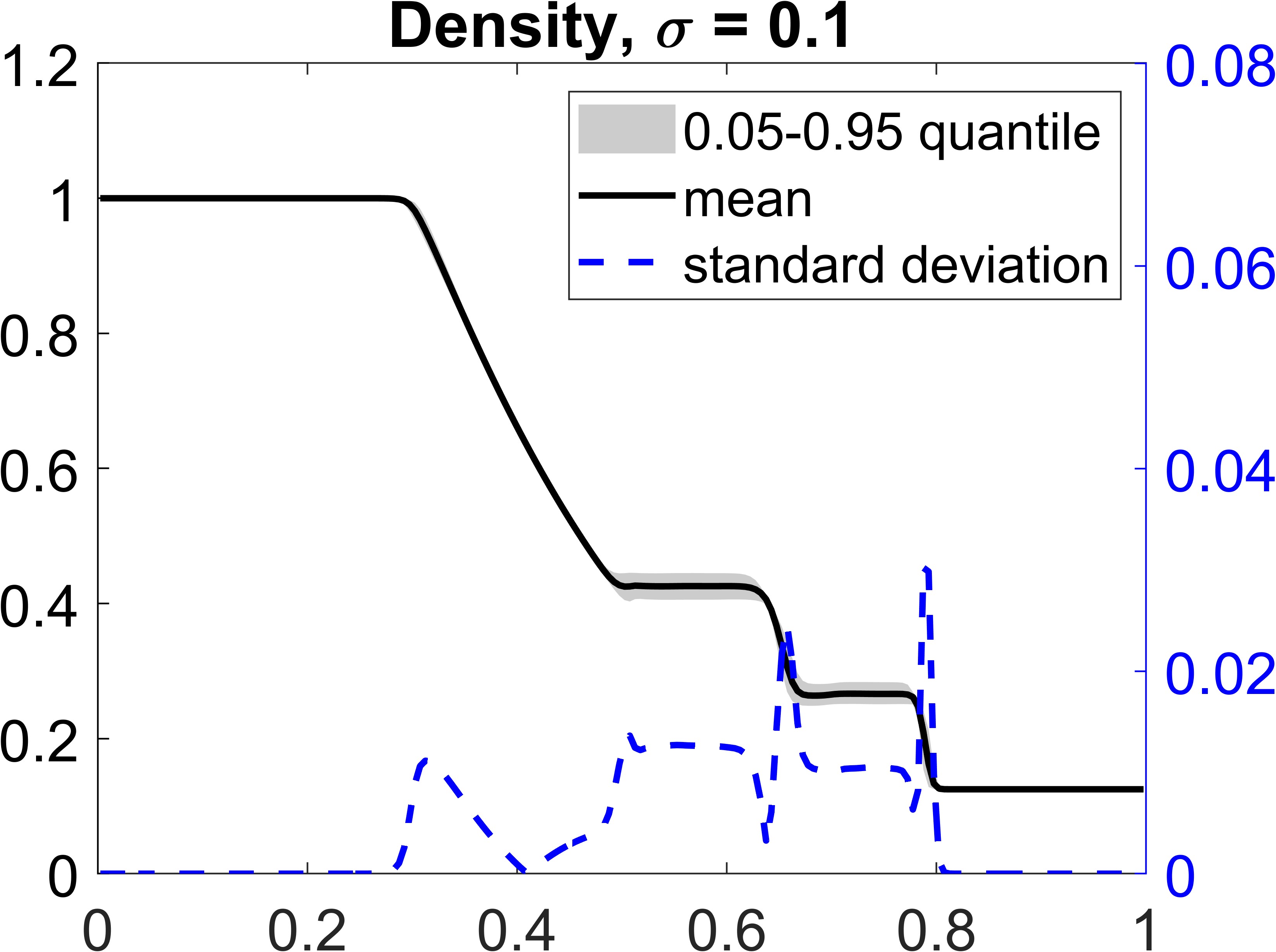}\hspace*{0.5cm}\includegraphics[width=0.32\textwidth]{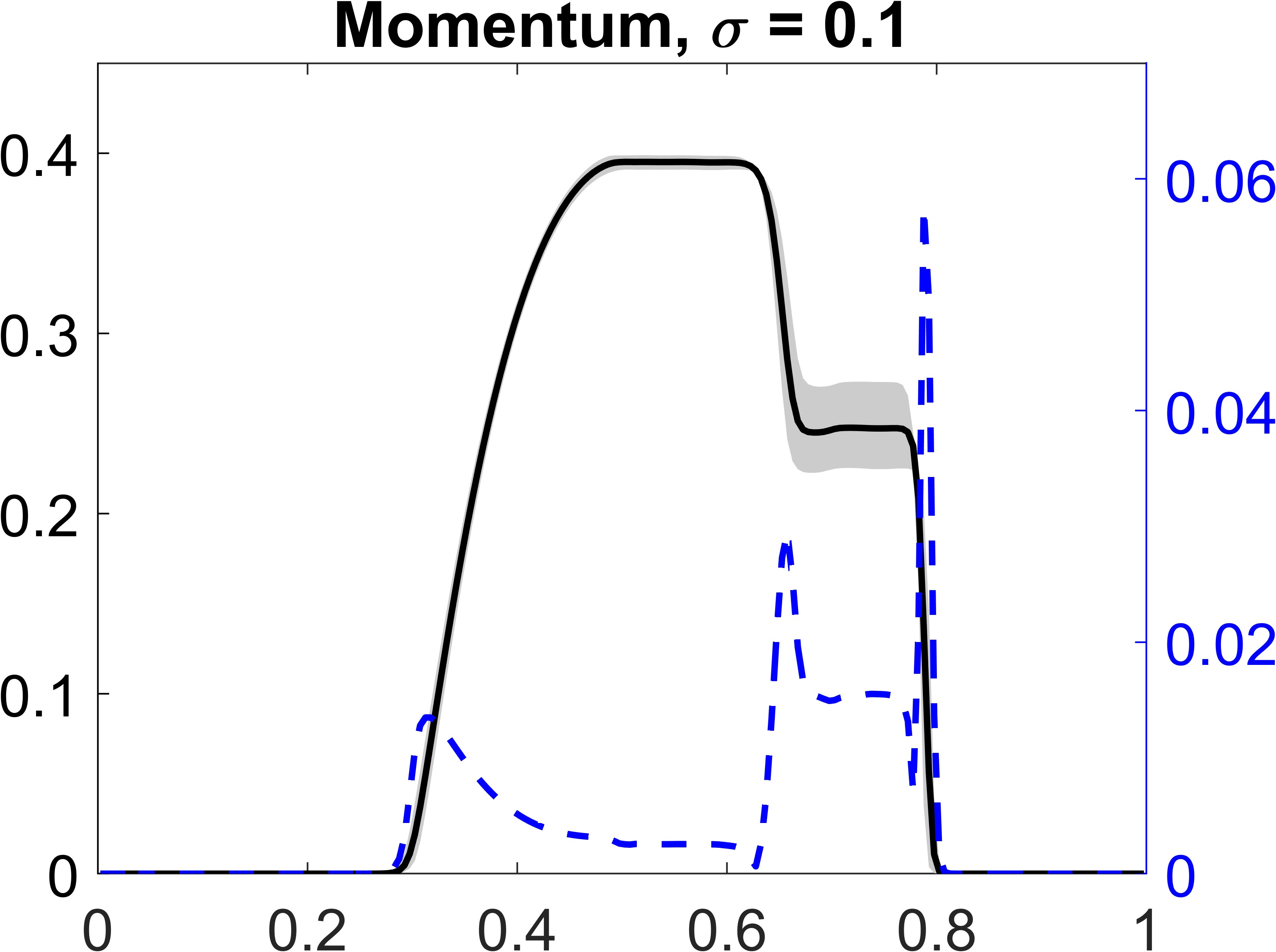}}
\vskip8pt
\centerline{\includegraphics[width=0.34\textwidth]{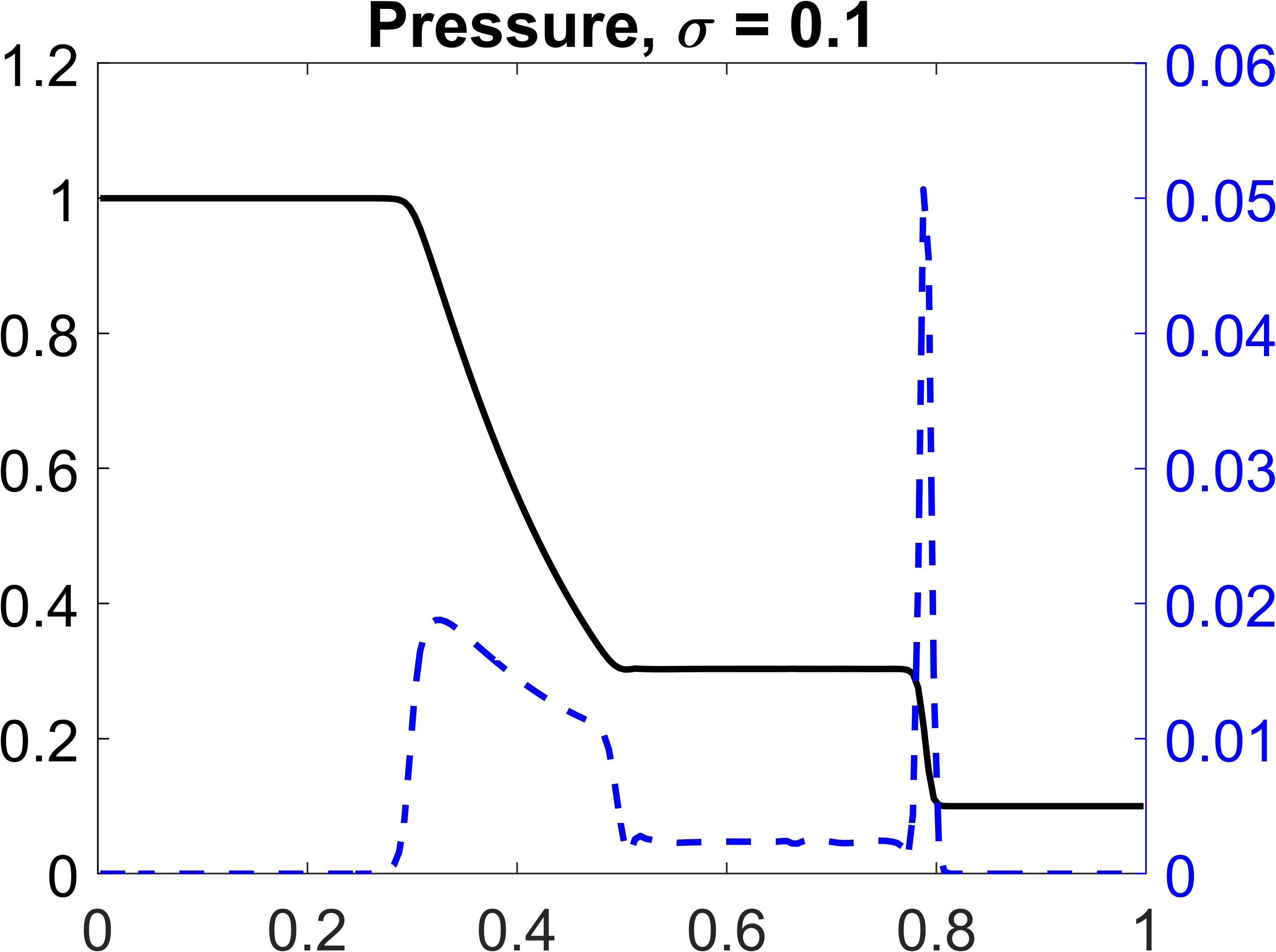}\hspace*{0.5cm}\includegraphics[width=0.32\textwidth]{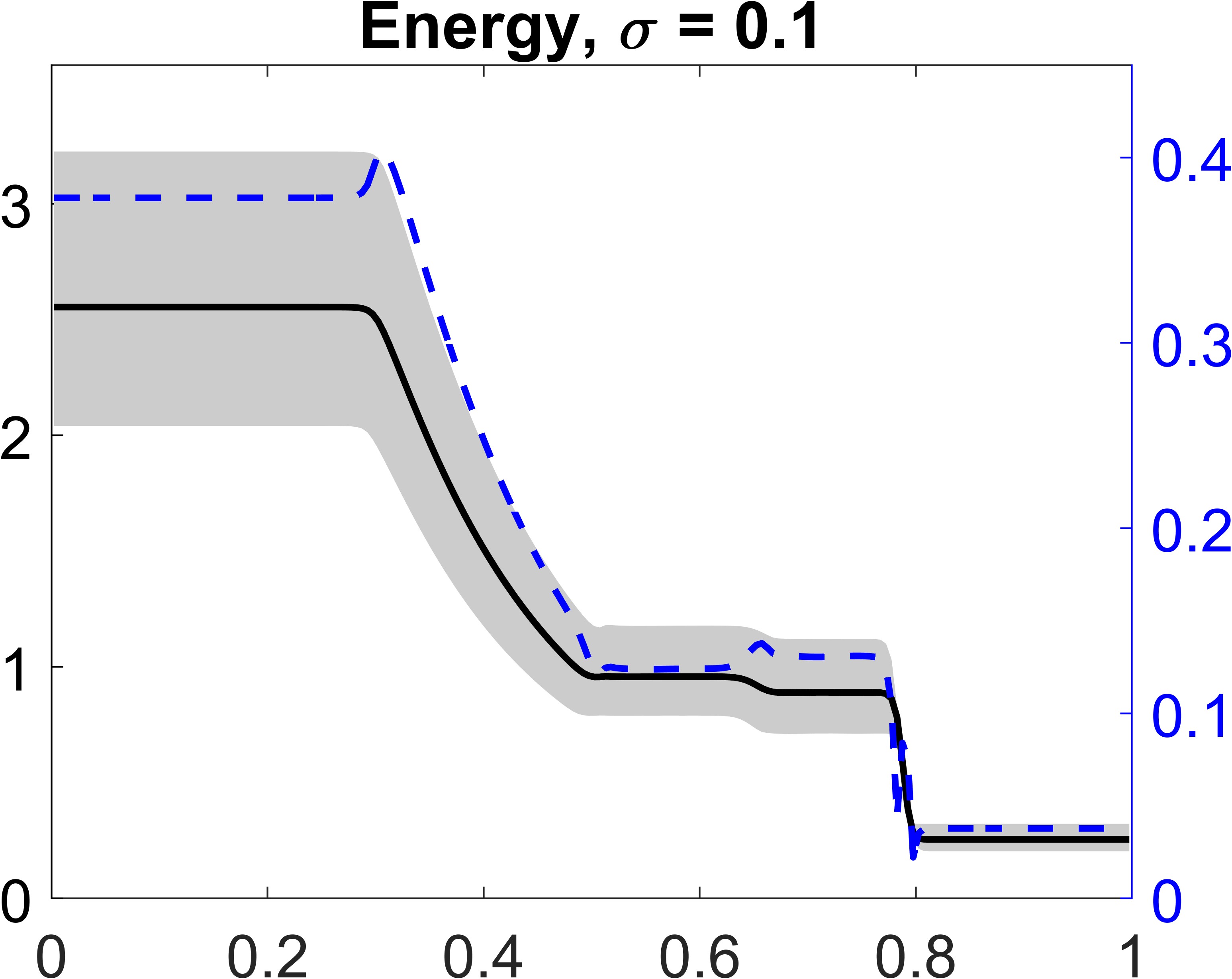}}
\caption{\sf Example 1, Test 2: Mean, 95\%-quantile, and standard deviation of $\rho$, $\rho u$, $p$, and $E$.\label{fig53}}
\end{figure}

\paragraph{Example 2 ($d=1$ and $s=2$).} The second example corresponds to the case of one spatial variable $x$ and two random variables
{$\xi\sim{\cal U}(-1,1)$ and $\eta\sim{\cal U}(-1,1)$, that is, $\nu(\xi,\eta)=\frac{1}{4}$}. In this case, the studied system is
given by \eref{2.1}, \eref{5.1}, \eref{5.2} but with all of the quantities depending not only on $x$, $\xi$, and $t$ but also on $\eta$.

We consider the same Sod shock tube problem \eref{5.3} studied in Example 1, but with the uncertainties in both the ICs and adiabatic
constant:
\begin{equation*}
(\rho(x,0,\xi,\eta),u(x,0,\xi,\eta),p(x,0,\xi,\eta))=\left\{
\begin{aligned}
&(1+0.1\xi,0,1),&&x<0.5,\\
&(0.125,0,0.1),&&x>0.5,
\end{aligned}
\right.\qquad\gamma(\eta)=1.4+0.1\eta.
\end{equation*}
We compute the solution on a uniform mesh with $\dx=1/200$ and $\dxi=\deta=1/50$. The obtained results are shown in \fref{fig54}, where one
can see the mean and standard deviation for $\rho$, $\rho u$, $p$, and $E$. These results confirm that the proposed numerical method can
handle the case of multidimensional random variables.
\begin{figure}[ht!]
\centerline{\includegraphics[width=0.32\textwidth]{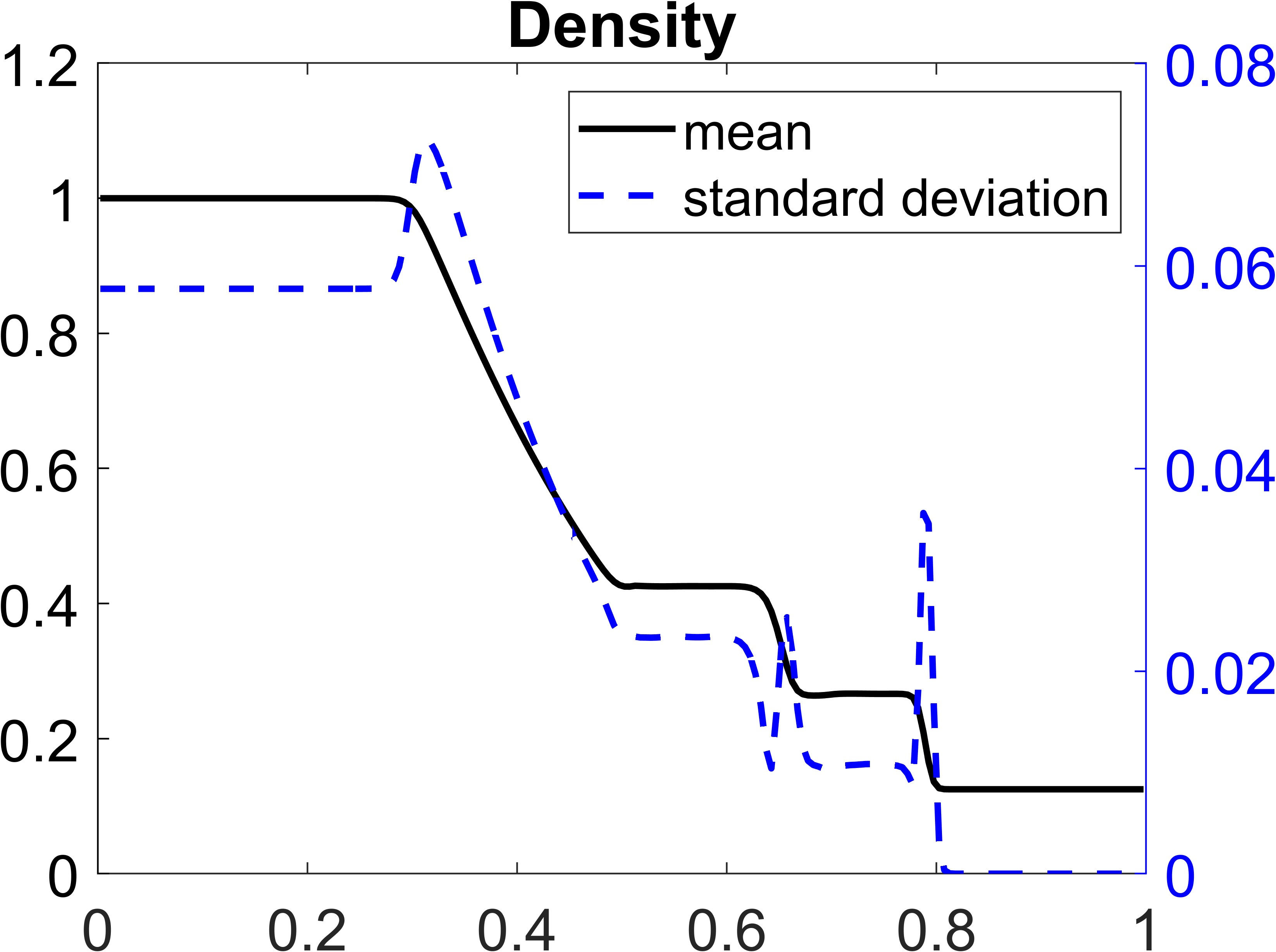}\hspace*{0.5cm}\includegraphics[width=0.32\textwidth]{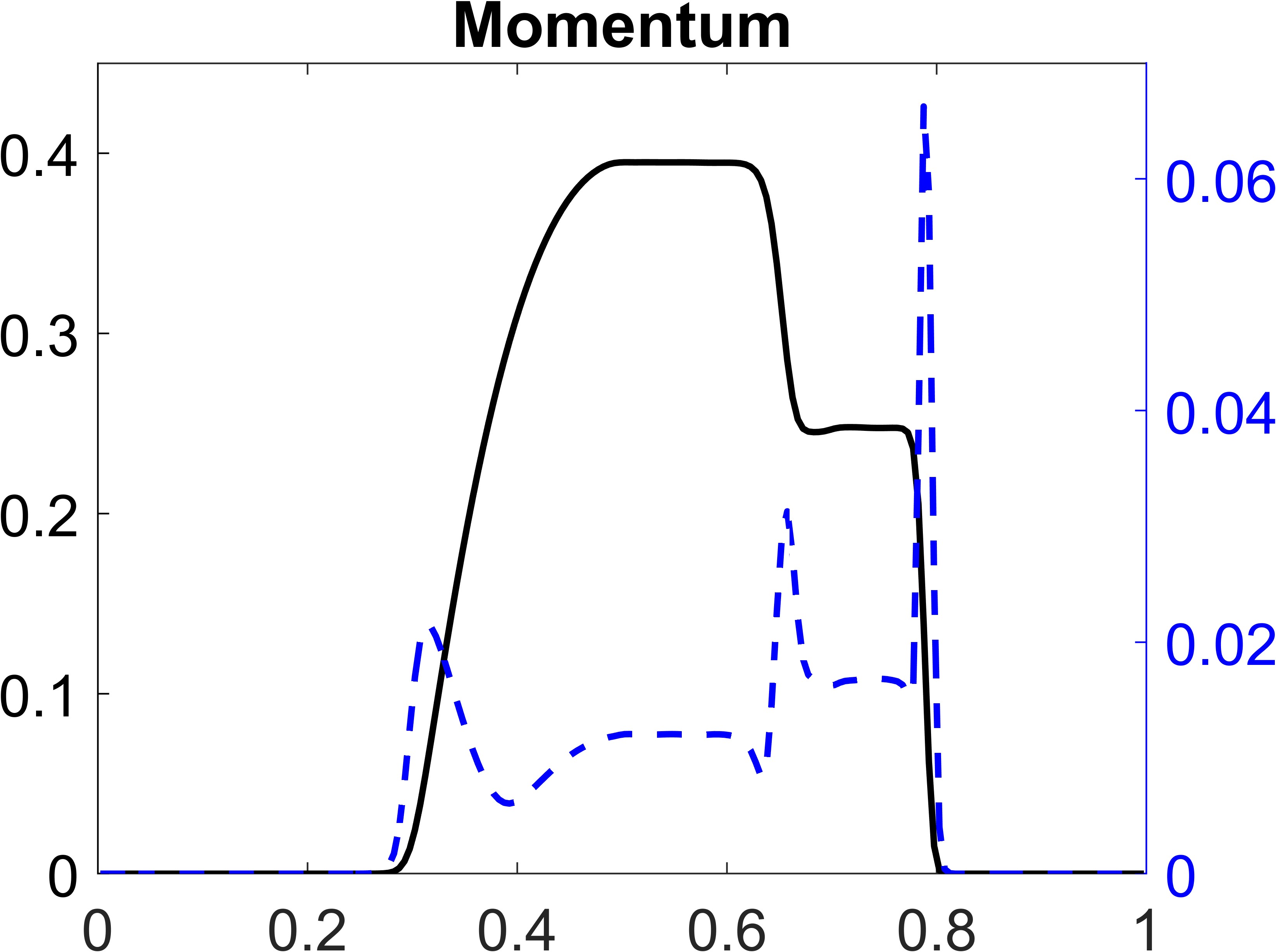}}
\vskip8pt
\centerline{\includegraphics[width=0.32\textwidth]{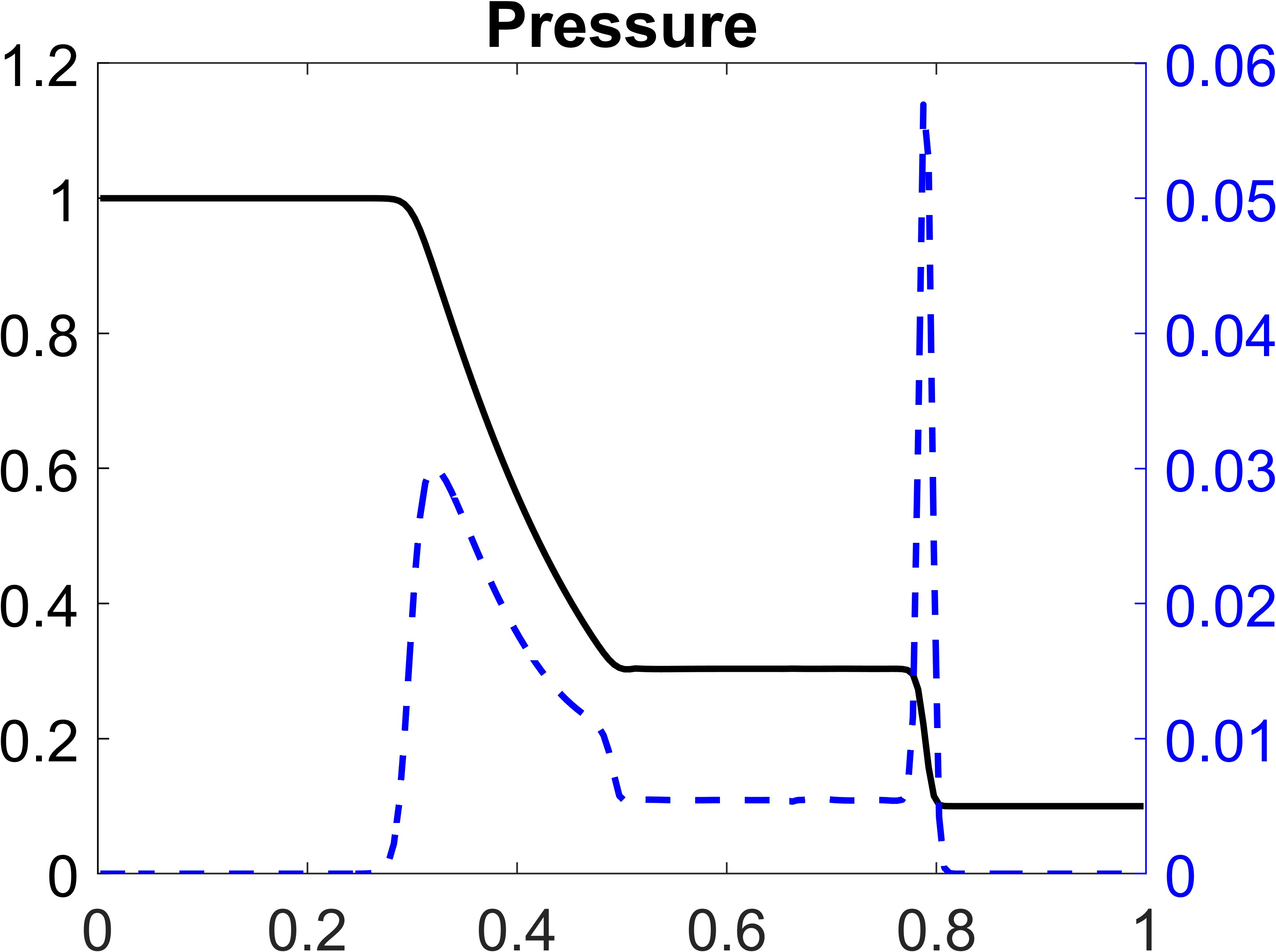}\hspace*{0.5cm}\includegraphics[width=0.32\textwidth]{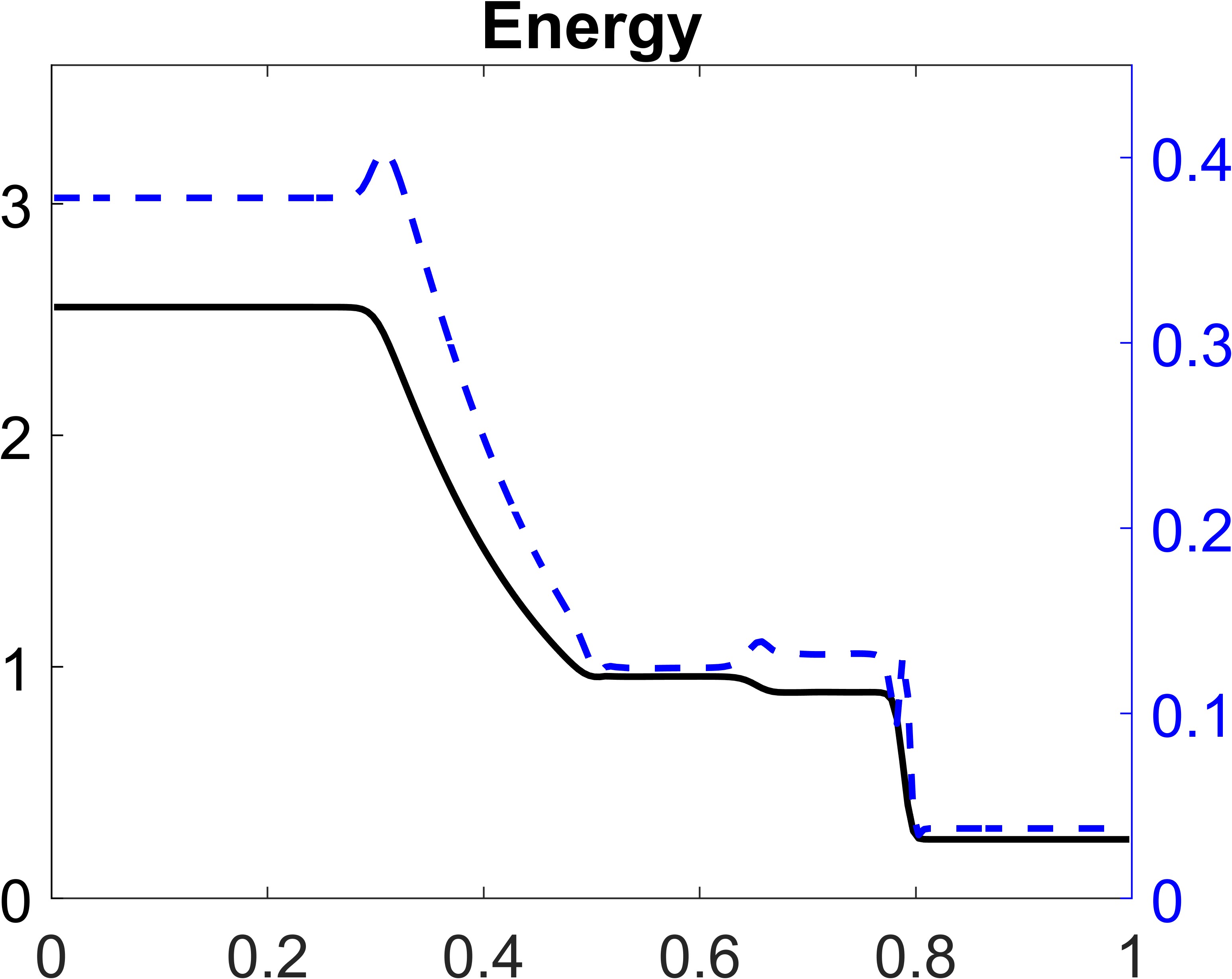}}
\caption{\sf Example 2: Mean and standard deviation of $\rho$, $\rho u$, $p$, and $E$.\label{fig54}}
\end{figure}

\paragraph{Example 3 ($d=2$ and $s=1$).} In the last numerical example, we consider the case of two spatial variables $x$ and $y$ and one
random variable {$\xi\sim{\cal U}(-1,1)$}, for which the Euler equations are governed by \eref{3.8} with
\begin{equation*}
\begin{aligned}
&\bm U=(\rho,\rho u,\rho v,E)^\top,\quad\bm F(\bm U)=\big(\rho u,\rho u^2+p,\rho uv,u(E+p)\big)^\top,\\
&\bm G(\bm U)=(\rho v,\rho uv,\rho u^2+p,v(E+p))^\top,\quad\bm S\equiv\bm0,
\end{aligned}
\end{equation*}
supplemented with the EOS:
\begin{equation*}
p=(\gamma-1)\left[E-\frac{\rho(u^2+v^2)}{2}\right]
\end{equation*}
with $\gamma=1.4$. Here, $u(x,y,t,\xi)$ and $v(x,y,t,\xi)$ are the $x$- and $y$-velocity components, respectively.

For the ICs, we take a two-dimensional (2-D) Riemann problem, which is a perturbed (for nonzero $\sigma$) version of configuration 10 from
\cite{KTrp}:
\begin{equation*}
(\rho,u,v,p)(x,y,0,\xi)=\left\{\begin{aligned}
&(1,0,0.4297(1-\sigma\xi),1),&&x>0.5,~y>0.5,\\
&(0.5(1+\sigma\xi),\sigma\xi,0.6076(1-\sigma\xi),1-\sigma\xi),&&x<0.5,~y>0.5,\\
&(0.2281(1+\sigma\xi),0,-0.6076(1+\sigma\xi),0.3333),&&x<0.5,~y<0.5,\\
&(0.4562,0,-0.4297(1+\sigma\xi),0.3333),&&x>0.5,~y<0.5.
\end{aligned}\right.
\end{equation*}
We first compute the deterministic solution ($\sigma=0$) on the spatial domain $(x,y)\in[0,1]\times[0,1]$ until time $t=0.15$ on a uniform
mesh with $\dx=\dy=1/400$. The resulting density component is shown in \fref{fig5.5} (left). We then use a quite large perturbation with 
$\sigma=0.5$ and repeat the same numerical experiment using $\dxi=1/10$. The mean density obtained is quite different from the
deterministic case; see \fref{fig5.5} (middle). We also present the computed standard deviation in \fref{fig5.5} (right), where one can see
the areas in which the uncertainties influence the solution the most.
\begin{figure}[ht!]
\centerline{\includegraphics[width=0.30\textwidth]{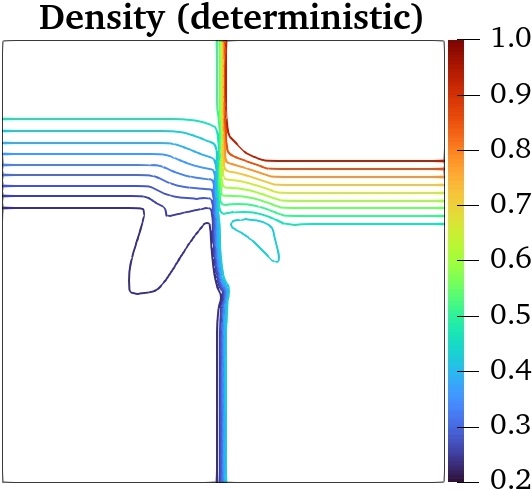}\hspace*{0.5cm}\includegraphics[width=0.30\textwidth]{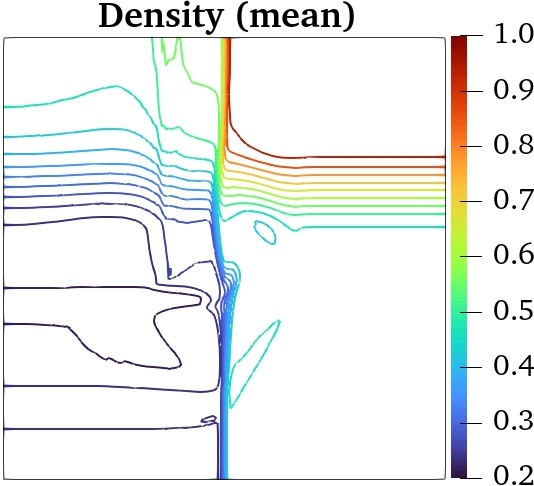}
\hspace*{0.5cm}\includegraphics[width=0.31\textwidth]{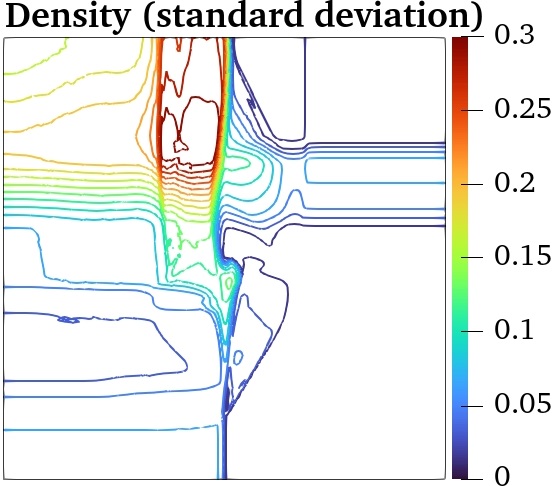}}
\caption{\sf Example 3: Deterministic $\rho$ (left), and mean (middle) and standard deviation (right) of the uncertain $\rho$.
\label{fig5.5}}
\end{figure}

\section{Saint-Venant System of Shallow Water Equations}\label{sec6}
We proceed by considering the Saint-Venant system of shallow water equations. There are several challenges associated with the numerical
solution of the Saint-Venant system. First, it contains a geometric source term, which is balanced by the corresponding flux. This requires
the development of the so-called WB schemes, which maintain this delicate balance at the discrete level. In addition, the water depth may
naturally be very small or even zero, and thus a good numerical method must be able to guarantee the non-negativity of the computed water
depth. The level of complexity increases even further with the presence of uncertainties as several additional techniques (compared to those
presented in \S\ref{sec2}) need to be developed. In what follows, we adopt the approach introduced in \cite{KPshw} and extend it to the 1-D
and 2-D Saint-Venant systems with uncertainties.

\subsection{Case $d=1$ and $s=1$}\label{sec61}
We begin with the 1-D case, in which the Saint-Venant system is given by \eref{2.1} with
\begin{equation}
\bm U=(h,hu)^\top,\quad\bm F(\bm U)=\Big(hu,hu^2+\frac{g}{2}h^2\Big)^\top,\quad\bm S=(0,-ghZ_x)^\top,
\label{6.1}
\end{equation}
where $h(x,t,\xi)$ is the water depth, $u(x,t,\xi)$ is the velocity, $Z(x,\xi)$ is a time-independent bottom topography, and $g$ is the
constant acceleration due to gravity.

The system \eref{2.1}, \eref{6.1} admits several steady states, among which the simplest ones are the so-called ``lake-at-rest'' equilibria:
\begin{equation}
u(x,\xi)\equiv0,\quad w(x,\xi):=h(x,\xi)+Z(x,\xi)=\widehat{w}(\xi),
\label{6.2}
\end{equation}
where $\widehat w$ is an arbitrary function of $\xi$ and the equilibrium variable $w$ represents the water surface. We will say that the
scheme is WB if it is capable of preserving the ``lake-at-rest'' states \eref{6.2} at the discrete level.

We follow the steps described in \S\ref{sec2} to develop a scheme for \eref{2.1}, \eref{6.1}. However, in order to ensure the WB property,
we do not reconstruct the point values of $h$---we instead reconstruct the water surface $w:=h+Z${, whose weighted cell averages
are computed using the following relationship:
$$
\xbar w_{j,\ell}=\,\xbar h_{j,\ell}+\,\xbar Z_{j,\ell},\quad
\,\xbar Z_{j,\ell}=\hf\sum_{i=1}^M\mu_i\,\nu(\xi_{\ell_i})\left[Z_{\jmh,\ell_i}+Z_{\jph,\ell_i}\right].
$$
Here, $Z_{j\pm\hf,\ell_i}:=Z(x_{j\pm\hf},\xi_{\ell_i})$ and the weighted cell average of $Z$ was obtained using the trapezoidal rule in $x$
and the same Gauss-Legendre quadrature as before in $\xi$.
}

We note that some of the point values $w^+_{\jmh,\ell_i}$ and $w^-_{\jph,\ell_i}$, reconstructed using \eref{2.8}--\eref{2.10} followed by
the Ai-WENO-Z interpolation introduced in \S\ref{sec222}, may not satisfy the water depth positivity requirement. We thus enforce the
non-negativity of the point values of $h$ in the simplest way by setting
\begin{equation*}
h^+_{\jmh,\ell_i}=\max\left\{w^+_{\jmh,\ell_i},Z_{\jmh,\ell_i}\right\}-Z_{\jmh,\ell_i},\quad
h^-_{\jph,\ell_i}=\max\left\{w^-_{\jph,\ell_i},Z_{\jph,\ell_i}\right\}-Z_{\jph,\ell_i}.
\end{equation*}

Once the reconstructed point values of $h$ and $hu$ are available, we need to compute the corresponding point values of
$u=\nicefrac{(hu)}{h}$. In order to avoid a division by small numbers, as $h$ may be very small, the computation of $u$ must be
desingularized. We use the following desingularization formula (see \cite{KPshw}):
\begin{equation}
u=\frac{2h(hu)}{h^2+[\max(h,\varepsilon)]^2},
\label{6.3}
\end{equation}
where $\varepsilon$ is a small positive number. For consistency, we then recalculate the corresponding point values of $hu$ by setting
\begin{equation}
(hu):=h\cdot u.
\label{6.4}
\end{equation}
We note that all of the indexes in formulae \eref{6.3} and \eref{6.4} have been omitted for the sake of brevity.
{
\begin{rmk}
It is well-known that when (almost) dry areas are present, one may prefer to reconstract $u$ rather the $hu$. In order to implement this
approach for the studied Saint-Venant system with uncertainties, one would need to first recover the point values of $u$ at the cell
centers out of the given cell averages of $h$ and $hu$. While this can be easily done for first- and second-order schemes, high-order
recovery procedure may be rather cumbersome. We therefore prefer to reconstruct $hu$ and then to use the value of the desingularization
parameter $\varepsilon$ to control the oscillations in $u$ and thus to prevent appearance of artificially small time steps, which may
otherwise significantly affect the efficiency of the resulting scheme.
\end{rmk}
}

Finally, we follow the lines of \cite{KPshw} and design a WB quadrature for the integral on the RHS of \eref{2.7} of the second component
of source term $\bm S$ in \eref{6.1}:
\begin{equation}
\int\limits_{x_\jmh}^{x_\jph}\big(-gh(x,\xi_{\ell_i},t)Z_x(x,\xi_{\ell_i})\big)\,{\rm d}x\approx
-\frac{g}{2}\big(h_{\jmh,\ell_i}^++h_{\jph,\ell_i}^-\big)\big(Z_{\jph,\ell_i}-Z_{\jmh,\ell_i}\big).
\label{6.5}
\end{equation}
The use of the quadrature \eref{6.5} in \eref{2.7} leads to the WB semi-discrete scheme \eref{2.2}, \eref{2.4}--\eref{2.7}, \eref{6.5} as we
prove in the following theorem.
\begin{thm}\label{thm61}
Assume that at a certain time level, the discrete solution is at a ``lake-at-rest'' steady state, that is,
\begin{equation*}
\xbar w_{j,\ell}=\,\xbar h_{j,\ell}+\xbar Z_{j,\ell}\equiv\widehat w_\ell,\quad(\xbar{hu})_{j,\ell}\equiv0,\quad\forall j,\ell.
\end{equation*}
Then, the RHS of the system of ODEs \eref{2.2} vanishes and hence the proposed semi-discrete scheme \eref{2.2}, \eref{2.4}--\eref{2.7},
\eref{6.5} is WB.
\end{thm}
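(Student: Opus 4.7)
The plan is to show that under the given steady-state assumptions, the second component of the numerical flux (the nontrivial one) difference exactly cancels the WB-quadrature source term \eqref{6.5}, while the first component of everything vanishes. Since $(\overline{hu})_{j,\ell}\equiv0$ identically, its minmod slopes are zero, the momentum point values vanish at every interface and every Gauss-Legendre node, the first component of $\bm F$ is identically zero, and the first component of $\bm S$ is zero by definition. Thus only the second component requires real work.

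For that second component, I would first examine the physical-space reconstruction applied to $w=h+Z$ (which the WB scheme reconstructs instead of $h$). Because $\,\xbar w_{j,\ell}\equiv\hat w_\ell$ is independent of $j$, the minmod limiter \eref{2.9} returns $(\,\xbar w_x)_{j,\ell}=0$, and \eref{2.8} gives $w^-_{\jph,\ell}=w^+_{\jmh,\ell}=\hat w_\ell/\nu(\xi_\ell)$ for every $j$. The same reasoning gives $(hu)^\pm_{\jpmh,\ell}=0$. Next, I would observe that the Ai-WENO-Z interpolation in $\xi$ described in \S\ref{sec222} is purely a function of its five input values in $\xi$; applied to the (constant-in-$j$) data $\{w^-_{\jph,\ell}\}_\ell$ on the left and $\{w^+_{\jph,\ell}\}_\ell$ on the right, which coincide, it returns the same value at each Gauss-Legendre node. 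Denote this common value by $w^*_{\ell_i}$, so that $w^-_{\jph,\ell_i}=w^+_{\jph,\ell_i}=w^*_{\ell_i}$ and $(hu)^\pm_{\jph,\ell_i}=0$ for all $j$.

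Assuming we are in the wet regime ($w^*_{\ell_i}\ge Z_{\jpmh,\ell_i}$), the positivity correction acts as the identity and yields $h^-_{\jph,\ell_i}=w^*_{\ell_i}-Z_{\jph,\ell_i}$ and, shifting the index, $h^+_{\jmh,\ell_i}=w^*_{\ell_i}-Z_{\jmh,\ell_i}$; in particular the left and right $h$ values match at any given interface. The desingularization \eref{6.3}--\eref{6.4} then yields $u^\pm_{\jph,\ell_i}=0$ and $(hu)^\pm_{\jph,\ell_i}=0$. Consequently, at every quadrature node the left and right CU states coincide, so the CU flux \eref{2.5} collapses to $\bm F(\bm U^\pm_{\jph,\ell_i})$, whose second entry is $\tfrac{g}{2}(h^-_{\jph,\ell_i})^2$.

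The final step is a one-line algebraic identity: combining \eref{2.4} with the factorization
\[
\tfrac{g}{2}\bigl[(h^-_{\jph,\ell_i})^2-(h^+_{\jmh,\ell_i})^2\bigr]
=-\tfrac{g}{2}\bigl(h^-_{\jph,\ell_i}+h^+_{\jmh,\ell_i}\bigr)\bigl(Z_{\jph,\ell_i}-Z_{\jmh,\ell_i}\bigr),
\]
which uses $h^-_{\jph,\ell_i}-h^+_{\jmh,\ell_i}=-(Z_{\jph,\ell_i}-Z_{\jmh,\ell_i})$ (the two $w^*_{\ell_i}$ contributions cancel), shows that $(\bm{{\cal F}}_{\jph,\ell}-\bm{{\cal F}}_{\jmh,\ell})/\dx$ is exactly the negative of the quadrature \eref{6.5} inserted in \eref{2.7}. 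Thus the RHS of \eref{2.2} vanishes component-wise. The only delicate step is the alignment between the $Z$-point values entering the reconstruction/positivity correction (hence the CU flux) and those entering the source quadrature \eref{6.5}; the WB design is precisely the choice that makes these telescoping pieces match, and that is the place I would check carefully. The wet-bed assumption is natural since, by hypothesis, $\,\xbar h_{j,\ell}\ge0$ at the steady state; a brief remark on the dry-bed case (where the $\max$-correction activates symmetrically on both sides and the same cancellation persists) would complete the argument.
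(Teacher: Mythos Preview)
Your proposal is correct and follows essentially the same route as the paper: constancy of $\,\xbar w_{j,\ell}$ in $j$ kills the minmod slopes, the Ai-WENO-Z interpolation inherits this $j$-independence at the Gauss nodes, the CU flux collapses to the physical flux $\tfrac{g}{2}h^2$, and the difference-of-squares factorization matches the WB source quadrature \eref{6.5} term by term. One small slip: in your final sentence the flux difference over $\dx$ \emph{equals} (not ``is the negative of'') the source term $\xbar S^{(2)}_{j,\ell}$, which is exactly what is needed for the RHS of \eref{2.2} to vanish; your stated identity and conclusion are correct, only the connecting phrase is off.
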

\begin{proof}
We first observe that the minmod reconstruction in the $x$-direction \eref{2.8}--\eref{2.10} ensures
\begin{equation}
w^-_{\jph,\ell}=w^+_{\jph,\ell}=\widehat{w}_\ell,\quad u^-_{\jph,\ell}=u^+_{\jph,\ell}=0,\quad\forall j,\ell.
\label{6.6}
\end{equation}
Then, the fifth-order Ai-WENO-Z interpolation described in \S\ref{sec222} results in
\begin{equation}
\begin{aligned}
&w^-_{\jph,\ell+\kappa}=w^+_{\jph,\ell+\kappa}=w_{\ell+\kappa},\\
&w^-_{\jph,\ell-\kappa}=w^+_{\jph,\ell-\kappa}=w_{\ell-\kappa},
\end{aligned}
\qquad u^-_{\jph,\ell\pm\kappa}=u^+_{\jph,\ell\pm\kappa}=0,\quad\forall j,\ell,
\label{6.7}
\end{equation}
where the values $w_{\ell\pm\kappa}$ are independent of $j$ since for the ``lake-at-rest'' data, the Ai-WENO-Z interpolation is conducted
over the same set of discrete values along $x=x_\jph,\,\forall j$.

Using \eref{6.6} and \eref{6.7} in \eref{2.4}--\eref{2.5} and \eref{2.7}, \eref{6.5}, we obtain the following formula for the CU numerical
fluxes $\bm{{\cal F}}_{\jph,\ell}=({\cal F}^{(1)}_{\jph,\ell},{\cal F}^{(2)}_{\jph,\ell})^\top$:
\begin{equation}
\begin{aligned}
&{\cal F}^{(1)}_{\jph,\ell}=0,\\
&{\cal F}^{(2)}_{\jph,\ell}=\frac{\mu_1g}{2}\big(w_{\ell-\kappa}-Z_{\jph,\ell-\kappa}\big)^2+
\frac{\mu_2g}{2}\big(\widehat w_{\ell}-Z_{\jph,\ell}\big)^2+\frac{\mu_3g}{2}\big(w_{\ell+\kappa}-Z_{\jph,\ell+\kappa}\big)^2,
\end{aligned}
\label{6.8}
\end{equation}
and the source term $\bm S_{j,\ell}=(0,S^{(2)}_{j,\ell})^\top$:
\begin{equation}
\begin{aligned}
S^{(2)}_{j,\ell}=&-\frac{\mu_1g}{2}\big(2w_{\ell-\kappa}-Z_{\jph,\ell-\kappa}-Z_{\jmh,\ell-\kappa}\big)
\big(Z_{\jph,\ell-\kappa}-Z_{\jmh,\ell-\kappa}\big)\\
&-\frac{\mu_2g}{2}\big(2\widehat w_\ell-Z_{\jph,\ell}-Z_{\jmh,\ell}\big)\big(Z_{\jph,\ell}-Z_{\jmh,\ell}\big)\\
&-\frac{\mu_3g}{2}\big(2w_{\ell+\kappa}-Z_{\jph,\ell+\kappa}-Z_{\jmh,\ell+\kappa}\big)
\big(Z_{\jph,\ell+\kappa}-Z_{\jmh,\ell+\kappa}\big).
\end{aligned}
\label{6.9}
\end{equation}
Substituting \eref{6.8} and \eref{6.9} into \eref{2.2} and using a simplifying the obtained expressions, yields
\begin{equation*}
\frac{{\rm d}}{{\rm d}t}\,\xbar{\bm U}_{j,\ell}(t)=\bm0,
\end{equation*}
which completes the proof of the theorem.
\end{proof}

\subsection{Case $d=1$ and $s=2$}\label{sec62}
We now consider the Saint-Venant system \eref{2.1}, \eref{6.1}, where all of the quantities depend on two independent random variables $\xi$
and $\eta$ in addition to the dependence on the physical variables $x$ and $t$. In this case, the ``lake-at-rest'' equilibria are given by
\begin{equation*}
u(x,\xi,\eta)\equiv0,\quad w(x,\xi,\eta):=h(x,\xi,\eta)+Z(x,\xi,\eta)=\widehat w(\xi,\eta),
\end{equation*}
where $\widehat w$ is now an arbitrary function of both $\xi$ and $\eta$.

As in \S\ref{sec61}, we reconstruct $w$ and $hu$ {and the weighted cell averages of $w$ are calculated using the following
relationship:
$$
\xbar w_{j,\ell,m}=\,\xbar h_{j,\ell,m}+\,\xbar Z_{j,\ell,m},\quad
\,\xbar Z_{j,\ell,m}=\hf\sum_{i,r=1}^M\mu_i\,\mu_r\,\nu(\xi_{\ell_i},\eta_{m_r})\left[Z_{\jmh,\ell_i}+Z_{\jph,\ell_i}\right].
$$
Here, $Z_{j\pm\hf,\ell_i,m_r}:=Z(x_{j\pm\hf},\xi_{\ell_i},\eta_{m_r})$ and the weighted cell average of $Z$ was obtained using the
trapezoidal rule in $x$ and the same Gauss-Legendre quadrature as before in $\xi$ and $\eta$.} We then obtain the non-negative point values
of $h$ by setting
\begin{equation*}
\begin{aligned}
&h^+_{\jmh,\ell_i,m_r}=\max\left\{w^+_{\jmh,\ell_i,m_r},Z_{\jmh,\ell_i,m_r}\right\}-Z_{\jmh,\ell_i,m_r},\\
&h^-_{\jph,\ell_i,m_r}=\max\left\{w^-_{\jph,\ell_i,m_r},Z_{\jph,\ell_i,m_r}\right\}-Z_{\jph,\ell_i,m_r},
\end{aligned}
\end{equation*}
where we desingularize the computation of the point values of $u$ using \eref{6.3} and \eref{6.4}. Finally, the WB quadrature \eref{6.5} now
reads as
\begin{equation}
\begin{aligned}
\int\limits_{x_\jmh}^{x_\jph}\big(&-gh(x,\xi_{\ell_i},\eta_{m_r}t)Z_x(x,\xi_{\ell_i},\eta_{m_r})\big)\,{\rm d}x\\
\approx&-\frac{g}{2}\big(h_{\jmh,\ell_i,m_r}^++h_{\jph,\ell_i,m_r}^-\big)\big(Z_{\jph,\ell_i,m_r}-Z_{\jmh,\ell_i,m_r}\big).
\end{aligned}
\label{6.10}
\end{equation}
The use of the quadrature \eref{6.10} in \eref{3.6} leads to the WB semi-discrete scheme \eref{3.2}--\eref{3.6}, \eref{6.10}, as
stated in the following Theorem \ref{thm62}, whose proof is completely analogous to the proof of Theorem \ref{thm61}.
\begin{thm}\label{thm62}
Assume that at a certain time level, the discrete solution is at a ``lake-at-rest'' steady state, that is,
$$
\xbar w_{j,\ell,m}=\xbar h_{j,\ell,m}+\xbar Z_{j,\ell,m}\equiv\widehat w_{\ell,m},\quad(\xbar{hu})_{j,\ell,m}\equiv0,\quad\forall j,\ell,m.
$$
Then, the RHS of the system of ODEs \eref{3.2} vanishes and hence the proposed semi-discrete scheme \eref{3.2}--\eref{3.6}, \eref{6.10} is
WB.
\end{thm}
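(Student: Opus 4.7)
The plan is to mirror the proof of Theorem~\ref{thm61} step by step, replacing the single Gauss-Legendre sum over $\xi$ with the tensor-product double sum over $(\xi,\eta)$ that appears in \eref{3.3} and \eref{3.6}. The whole argument rests on showing that (i) the reconstruction in $x$ preserves the constant-in-$x$ ``lake-at-rest'' profile, (ii) the Ai-WENO-Z interpolation in the two random directions then only redistributes values that are already independent of $j$, and (iii) the particular WB quadrature \eref{6.10} makes the pressure-gradient contribution of the flux in the $x$-direction cancel exactly against the geometric source.

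First, I would verify that the $x$-direction minmod reconstruction from the generalized version of \eref{2.8} applied to $\xbar w_{j,\ell,m}\equiv\widehat w_{\ell,m}$ and $(\xbar{hu})_{j,\ell,m}\equiv0$ yields zero slopes in $x$, so that
\begin{equation*}
w^-_{\jph,\ell,m}=w^+_{\jph,\ell,m}=\widehat w_{\ell,m},\qquad u^-_{\jph,\ell,m}=u^+_{\jph,\ell,m}=0,\qquad\forall j,\ell,m.
\end{equation*}
Next, I would invoke the two-step Ai-WENO-Z interpolation described in \S\ref{sec31}: since the five-point stencils in the $\xi$- and $\eta$-directions along $x=x_\jph$ consist of the $j$-independent values $\widehat w_{\cdot,\cdot}$ (for $w$) and $0$ (for $u$), the interpolated values $w^\pm_{\jph,\ell_i,m_r}$ and $u^\pm_{\jph,\ell_i,m_r}$ at the Gauss-Legendre nodes are likewise independent of $j$. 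I will denote them by $w_{\ell_i,m_r}$ and $0$ respectively; the same reasoning applies at $x_\jmh$. The non-negativity truncation used to recover $h^\pm$ from $w^\pm$ and $Z$ is a pointwise operation in $(\xi_{\ell_i},\eta_{m_r})$, so the resulting values $h^\pm_{\jpmh,\ell_i,m_r}=w_{\ell_i,m_r}-Z_{\jpmh,\ell_i,m_r}$ remain consistent with the equilibrium.

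With these observations, the CU flux formula \eref{3.4} collapses in exactly the same way as in the 1-D proof: the mass component of $\bm{{\cal F}}_{\jph,\ell,m}$ vanishes because all reconstructed velocities are zero (so both $\bm F$ contributions and the jump in $\bm U$ vanish in that component), while the momentum component reduces to the double Gauss-Legendre sum
\begin{equation*}
{\cal F}^{(2)}_{\jph,\ell,m}=\frac{g}{2}\sum_{i,r=1}^M\mu_i\,\mu_r\,\nu(\xi_{\ell_i},\eta_{m_r})\,\bigl(w_{\ell_i,m_r}-Z_{\jph,\ell_i,m_r}\bigr)^2.
\end{equation*}
A parallel computation using \eref{6.10} in \eref{3.6} gives
\begin{equation*}
\xbar S^{(2)}_{j,\ell,m}=-\frac{g}{2\dx}\sum_{i,r=1}^M\mu_i\,\mu_r\,\nu(\xi_{\ell_i},\eta_{m_r})\,\bigl(2w_{\ell_i,m_r}-Z_{\jph,\ell_i,m_r}-Z_{\jmh,\ell_i,m_r}\bigr)\bigl(Z_{\jph,\ell_i,m_r}-Z_{\jmh,\ell_i,m_r}\bigr).
\end{equation*}
Finally, I would substitute these expressions into \eref{3.2} and use the algebraic identity $A^2-B^2=(A+B)(A-B)$ termwise in the double sum to conclude that $\bigl({\cal F}^{(2)}_{\jph,\ell,m}-{\cal F}^{(2)}_{\jmh,\ell,m}\bigr)/\dx+\xbar S^{(2)}_{j,\ell,m}=0$, and hence the RHS of \eref{3.2} vanishes componentwise.

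The only place where this 2-D-random proof is nontrivially different from the 1-D-random one is the flatness argument for the two-step Ai-WENO-Z interpolation: I need the sequential application of the $\xi$-WENO and then the $\eta$-WENO (or vice versa, as noted in the remark following \S\ref{sec31}) to preserve values that are independent of $j$. This is the step I expect to be the main obstacle to write carefully, although it reduces to the observation that both one-dimensional Ai-WENO-Z sweeps are linear (with weights depending only on the data on the stencil, which is itself $j$-independent here) and therefore cannot introduce any $j$-dependence. The rest of the argument is a direct bookkeeping extension of the Theorem~\ref{thm61} proof to the tensor-product quadrature.
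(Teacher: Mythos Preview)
Your proposal is correct and is exactly the ``completely analogous'' argument the paper invokes in lieu of a written-out proof: you carry the proof of Theorem~\ref{thm61} over verbatim, with the single Gauss--Legendre sum in $\xi$ replaced by the tensor-product double sum in $(\xi,\eta)$ from \eref{3.3} and \eref{3.6}, and you correctly isolate the only new wrinkle (that the two-step Ai-WENO-Z interpolation cannot introduce $j$-dependence when the input data are $j$-independent). One minor slip: in your last displayed claim the sign should read $-\bigl({\cal F}^{(2)}_{\jph,\ell,m}-{\cal F}^{(2)}_{\jmh,\ell,m}\bigr)/\dx+\xbar S^{(2)}_{j,\ell,m}=0$, matching the RHS of \eref{3.2}; the difference-of-squares identity then gives the cancellation exactly as you describe.
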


\subsection{Case $d=2$ and $s=1$}\label{sec63}
In this section, we consider a 2-D physical domain and one random variable $\xi$. The shallow water equations are given by \eref{3.8} with
\begin{equation}
\begin{aligned}
&\bm U=(h,hu,hv)^\top,\quad\bm F(\bm U)=\Big(hu,hu^2+\frac{g}{2}h^2,huv\Big)^\top,\\
&\bm G(\bm U)=\Big(hv,huv,hv^2+\frac{g}{2}h^2\Big)^\top,\quad\bm S=(0,-ghZ_x,-ghZ_y)^\top,
\end{aligned}
\label{6.11}
\end{equation}
where $h(x,y,\xi,t)$ is the water depth, $u(x,y,\xi,t)$ and $v(x,y,\xi,t)$ are the $x$- and $y$-velocities, $Z(x,y,\xi)$ is the bottom
topography and $g$ is the constant acceleration due to gravity.

The system \eref{1.1}, \eref{6.11} also admits the ``lake-at-rest'' steady states satisfying
\begin{equation*}
u(x,y,\xi)=v(x,y,\xi)\equiv0,\quad w(x,y,\xi)=\widehat w(\xi),
\end{equation*}
where $\widehat{w}$ is an arbitrary function of $\xi$.

As in the 1-D case, in order to develop a WB scheme for \eref{3.8}, \eref{6.11}, we first reconstruct the point values of
$w${, whose weighted cell averages are computed using the following relationship:
$$
\xbar w_{j,k,\ell}=\,\xbar h_{j,k,\ell}+\,\xbar Z_{j,k,\ell},\quad
\,\xbar Z_{j,k,\ell}=\frac{1}{4}\sum_{i=1}^M\mu_i\,\nu(\xi_{\ell_i})
\left[Z_{\jmh,k,\ell_i}+Z_{\jph,k,\ell_i}+Z_{j,\kmh,\ell_i}+Z_{j,\kph,\ell_i}\right].
$$
Here, $Z_{j\pm\hf,k,\ell_i}:=Z(x_{j\pm\hf},y_k,\xi_{\ell_i})$, $Z_{j,k\pm\hf,\ell_i}:=Z(x_j,y_{k\pm\hf},\xi_{\ell_i})$, and the weighted
cell average of $Z$ was obtained using the trapezoidal-type rule in $x$ and $y$ and the same Gauss-Legendre quadrature as before in $\xi$.}
We then calculate the point values of $h$ by enforcing their non-negativity through
\begin{equation*}
\begin{aligned}
&h^+_{\jmh,k,\ell_i}=\max\left\{w^+_{\jmh,k,\ell_i},Z_{\jmh,k,\ell_i}\right\}-Z_{\jmh,k,\ell_i},\\
&h^-_{\jph,k,\ell_i}=\max\left\{w^-_{\jph,k,\ell_i},Z_{\jph,k,\ell_i}\right\}-Z_{\jph,k,\ell_i},\\
&h^+_{j,\kmh,\ell_i}=\max\left\{w^+_{j,\kmh,\ell_i},Z_{j,\kmh,\ell_i}\right\}-Z_{j,\kmh,\ell_i},\\
&h^-_{j,\kph,\ell_i}=\max\left\{w^-_{j,\kph,\ell_i},Z_{j,\kph,\ell_i}\right\}-Z_{j,\kph,\ell_i}.
\end{aligned}
\end{equation*}

Once the reconstructed point values of $h, hu$, and $hv$ are available, we compute the corresponding point values of $u$ and $v$ by
implementing the same desingularization procedure as in \eref{6.3}:
\begin{equation}
u=\frac{2h(hu)}{h^2+[\max(h,\varepsilon)]^2},\quad v=\frac{2h(hv)}{h^2+[\max(h,\varepsilon)]^2}
\label{6.12}
\end{equation}
where $\varepsilon$ is a small positive number. For consistency, we then recalculate the corresponding point values of $hu$ and $hv$ by
setting
\begin{equation}
(hu):=h\cdot u,\quad(hv):=h\cdot v.
\label{6.13}
\end{equation}
As before, the indexes in formulae \eref{6.12} and \eref{6.13} have been omitted for the sake of brevity.

We complete the construction of the scheme by designing a WB quadrature for the integral on the RHS of \eref{3.13} of the second and third
components of source term $\bm S$ in \eref{6.11}. As in the 1-D case, we follow the lines of \cite{KPshw} and obtain
\begin{equation}
\begin{aligned}
\int\limits_{y_\kmh}^{y_\kph}\int\limits_{x_\jmh}^{x_\jph}\big(&-gh(x,y,\xi_{\ell_i},t)Z_x(x,y,\xi_{\ell_i})\big)\,{\rm d}x\,{\rm d}y\\
\approx&-\frac{g\dy}{2}\big(h_{\jmh,k,\ell_i}^++h_{\jph,k,\ell_i}^-\big)\big(Z_{\jph,k,\ell_i}-Z_{\jmh,k,\ell_i}\big),\\
\int\limits_{y_\kmh}^{y_\kph}\int\limits_{x_\jmh}^{x_\jph}\big(&-gh(x,y,\xi_{\ell_i},t)Z_y(x,y,\xi_{\ell_i})\big)\,{\rm d}x\,{\rm d}y\\
\approx&-\frac{g\dx}{2}\big(h_{j,\kmh,\ell_i}^++h_{j,\kph,\ell_i}^-\big)\big(Z_{j,\kph,\ell_i}-Z_{j,\kmh,\ell_i}\big).
\end{aligned}
\label{6.14}
\end{equation}
The use of the quadrature \eref{6.14} in \eref{3.13} leads to the WB semi-discrete scheme \eref{3.9}--\eref{3.13}, \eref{6.14} as stated in
the Theorem \ref{thm63}, whose proof is similar to the proof of Theorem \ref{thm61} and thus omitted for the sake of brevity.
\begin{thm}\label{thm63}
Assume that at a certain time level, the discrete solution is at a ``lake-at-rest'' steady state, that is,
\begin{equation*}
\xbar w_{j,k,\ell}=\xbar h_{j,k,\ell}+\xbar Z_{j,k,\ell}\equiv\widehat w_\ell,\quad(\xbar{hu})_{j,k,\ell}=(\xbar{hv})_{j,k,\ell}\equiv0,
\quad\forall j,k,\ell.
\end{equation*}
Then, the RHS of the system of ODEs \eref{3.9} vanishes and hence the proposed semi-discrete scheme \eref{3.9}--\eref{3.13}, \eref{6.14} is
WB.
\end{thm}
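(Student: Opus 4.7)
The plan is to proceed along the same lines as the proof of Theorem \ref{thm61}, now tracking the additional $y$-dependence. First I would verify that at a discrete ``lake-at-rest'' state, the minmod reconstruction in physical space preserves the equilibrium at all cell-interface midpoints. Since $\,\xbar w_{j,k,\ell}\equiv\widehat w_\ell$ and $(\xbar{hu})_{j,k,\ell}=(\xbar{hv})_{j,k,\ell}\equiv0$ for all $j,k,\ell$, the minmod slopes in both $x$- and $y$-directions vanish identically, yielding
\begin{equation*}
w^\pm_{\jpmh,k,\ell}=w^\pm_{j,\kpmh,\ell}=\widehat w_\ell,\qquad u^\pm=v^\pm=0
\end{equation*}
at every midpoint $(x_\jpmh,y_k,\xi_\ell)$ and $(x_j,y_\kpmh,\xi_\ell)$. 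The relation $\,\xbar w=\,\xbar h+\,\xbar Z$ used in \S\ref{sec63} guarantees that the $Z$-values enter consistently through the reconstruction.

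Next I would apply the fifth-order Ai-WENO-Z interpolation in the $\xi$-direction described in \S\ref{sec222} to the reconstructed interface values. Since the data passed to the interpolant along any fixed $(j,k)$-interface are constant in the spatial indices (they equal $\widehat w_\ell$ for the water surface and $0$ for the velocities), the interpolation produces
\begin{equation*}
w^\pm_{\jph,k,\ell\pm\kappa}=w^\pm_{j,\kph,\ell\pm\kappa}=w_{\ell\pm\kappa},\qquad u^\pm=v^\pm=0,
\end{equation*}
where the values $w_{\ell\pm\kappa}$ are independent of $j$ and $k$. The enforcement of water-depth positivity and the desingularization \eref{6.12}--\eref{6.13} do not disturb these identities, since $h^\pm=\widehat w_\ell-Z$ is already non-negative at equilibrium and $hu=hv=0$ remains $0$ after rescaling.

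Then I would compute the CU fluxes $\bm{{\cal F}}_{\jph,k,\ell}$ and $\bm{{\cal G}}_{j,\kph,\ell}$ via \eref{3.10}--\eref{3.11}. The mass and off-diagonal momentum components vanish because $u^\pm=v^\pm=0$, the dissipation terms proportional to $(\bm U^+-\bm U^-)$ vanish because left and right interface states coincide, and only the hydrostatic pressure contributions $\frac{g}{2}(w_{\ell_i}-Z)^2$ survive in the normal-momentum components, exactly as in \eref{6.8}. The WB source quadrature \eref{6.14}, evaluated with $h^++h^-=2w_{\ell_i}-Z_\jmh-Z_\jph$ (and the analogous $y$-identity), then produces, after telescoping inside the flux difference, the direct analogue of \eref{6.9} in each spatial direction separately.

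The main obstacle is mostly bookkeeping: one must check that the $x$-flux difference is canceled by the $x$-component of the source (involving $Z_\jph-Z_\jmh$ at fixed $k,\ell_i$) and, independently, that the $y$-flux difference is canceled by the $y$-component of the source (involving $Z_\kph-Z_\kmh$ at fixed $j,\ell_i$), with no cross-coupling between the two directions. This decoupling holds because the Gauss-Legendre sum runs only over $\xi_{\ell_i}$ (not over a product grid in random space) and because the two WB quadratures in \eref{6.14} are of the same form as \eref{6.5}, so the 2-D argument reduces cleanly to two copies of the 1-D cancellation established in Theorem \ref{thm61}. Substituting everything into \eref{3.9} then gives $\frac{\rm d}{{\rm d}t}\,\xbar{\bm U}_{j,k,\ell}=\bm0$, which completes the proof.
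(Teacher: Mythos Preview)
Your proposal is correct and follows exactly the approach the paper indicates: the paper omits the proof of Theorem~\ref{thm63}, stating only that it ``is similar to the proof of Theorem~\ref{thm61},'' and your argument is precisely that---the 2-D analogue of the 1-D cancellation, carried out direction by direction with the Ai-WENO-Z interpolation producing $j,k$-independent values $w_{\ell\pm\kappa}$ and the WB quadratures \eref{6.14} canceling the hydrostatic flux differences in $x$ and $y$ separately.
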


\subsection{Numerical Examples}\label{sec64}
In this section, we illustrate the performance of the proposed numerical method on several examples for the Saint-Venant system. In all
of the examples, we take $g=1$, except for Examples 7 and 8, in which we take $g=2$ and $g=9.812$, respectively. We have used the minmod
parameter either $\theta=1$ (Examples 4, 7, and 8) or $\theta=1.3$ (Examples 5, 6, and 9). {The desingularization parameter
$\varepsilon$ is set to $10^{-6}$ in Examples 4, 6, and 9, $\varepsilon=10^{-3}$ in Example 5, $\varepsilon=10^{-5}$ in Example 7, and
$\varepsilon=5\times10^{-4}$ in Example 8.}

\paragraph{Example 4 (Accuracy Test, $d=s=1$).} We begin with experimentally verifying the fifth order of accuracy in $\xi$ of the proposed
finite-volume method. To this end, we consider the following smooth ICs with uncertainty in the water surface:
\begin{equation*}
w(x,0,\xi)=1+0.1\tanh(4\xi)+0.01\sin(2\pi x),\quad u(x,0,\xi)\equiv0.1,\quad{\xi\sim{\cal U}(-1,1)}.
\end{equation*}
We take a flat bottom topography $Z(x,\xi)\equiv0$ and impose periodic BCs on the spatial domain $x\in[0,1]$.

We compute the numerical solution until time $t=0.01$ (at which the solution is still smooth) on a sequence of uniform meshes with
$\dxi=2/16$, $2/32$, $2/64$, and $2/128$. We then estimate the $L^1$-errors and experimental convergence rates using the following Runge
formulae, which are based on the solutions computed on the three consecutive uniform grids with the mesh sizes $\dxi$, $2\dxi$, and $4\dxi$
and denoted by $(\cdot)^{\dxi}$, $(\cdot)^{2\dxi}$, and $(\cdot)^{4\dxi}$, respectively:
\begin{equation*}
{\rm Error}(\dxi)\approx\frac{\delta^2_{12}}{|\delta_{12}-\delta_{24}|},\quad
{\rm Rate}(\dxi)\approx\log_2\left(\frac{\delta_{24}}{\delta_{12}}\right).
\end{equation*}
Here, $\delta_{12}:=\|(\cdot)^{\dxi}-(\cdot)^{2\dxi}\|_{L^1}$ and $\delta_{24}:=\|(\cdot)^{2\dxi}-(\cdot)^{4\dxi}\|_{L^1}$. Since spatial
discretization has only the second order of accuracy and temporal discretization has the third order, the following restrictions are applied
when the grid refinement is performed:
\begin{equation*}
\dx\propto(\dxi)^{\frac{5}{2}},\quad\dt\le\frac{(\dx)^{\frac{3}{2}}}{2\max\limits_{j,\ell}\left\{\max\big(a_{\jph,\ell}^+,-a_{\jph,\ell}^-\big)\right\}}.
\end{equation*}
In particular, we take the following sequences of spatial ($\dx=1/200$, $1/2400$, $1/28800$, and $1/345600$), and temporary
($\dt=1.38\times10^{-4}$, $3.33\times10^{-6}$, $8.01\times10^{-8}$, and $1.93\times10^{-9}$) mesh sizes for $\dxi=2/16$, $2/32$, $2/64$, and
$2/128$, respectively.

The computed $L^1$-errors and corresponding convergence rates for water surface and discharge are reported in Table \ref{tab61}, where one
can clearly see that the fifth order of accuracy in $\xi$ has been achieved.
\begin{table}[ht!]
\centering
\begin{tabular}{|c|c|c|c|c|}
\hline
\multirow{2}{*}{$\dxi$}&\multicolumn{2}{c|}{$w$}&\multicolumn{2}{c|}{$hu$}\\\cline{2-5}
&Error&Rate&Error&Rate\\\hline
$2/64$&$3.76\times10^{-9}$&6.34&$1.95\times10^{-9}$&5.25\\
$2/128$&$5.79\times10^{-11}$&6.18&$4.89\times10^{-11}$&5.28\\
\hline
\end{tabular}
\caption{\sf Example 4: The $L^1$-errors and experimental convergence rates for water surface $w$ and dicharge $hu$.\label{tab61}}
\end{table}

\paragraph{Example 5 (Dam Break over Random Bottom Topography, $d=s=1$).} In this example proposed in \cite[\S5.1]{DEN21a}, we consider the
deterministic ICs
\begin{equation*}
w(x,0,\xi)=\left\{\begin{aligned}&1,&&x<0,\\&0.5,&&x>0,\end{aligned}\right.\qquad u(x,0,\xi)\equiv0,
\end{equation*}
and free BCs, prescribed in the spatial domain $x\in[-1,1]$, while introducing the uncertainty via the following random bottom topography:
\begin{equation*}
Z(x,\xi)=0.125\xi+\left\{\begin{aligned}
&0.125\big[\cos(5\pi x)+2\big],&&|x|<0.2,\\
&0.125,&&\mbox{otherwise},
\end{aligned}\right.\quad{\xi\sim{\cal U}(-1,1)}.
\end{equation*}

We compute the numerical solution until time $t=0.8$ on a uniform grid with $\dx=1/800$ and $\dxi=1/50$. The obtained water surface $w$ and
discharge $hu$ are shown in Figure \ref{fig61}, where one can observe that the numerical solution computed by the proposed finite-volume
method is substantially less oscillatory than the gPC-SG solutions reported in \cite{DEN21a}.
\begin{figure}[ht!]
\centerline{\includegraphics[width=0.37\textwidth]{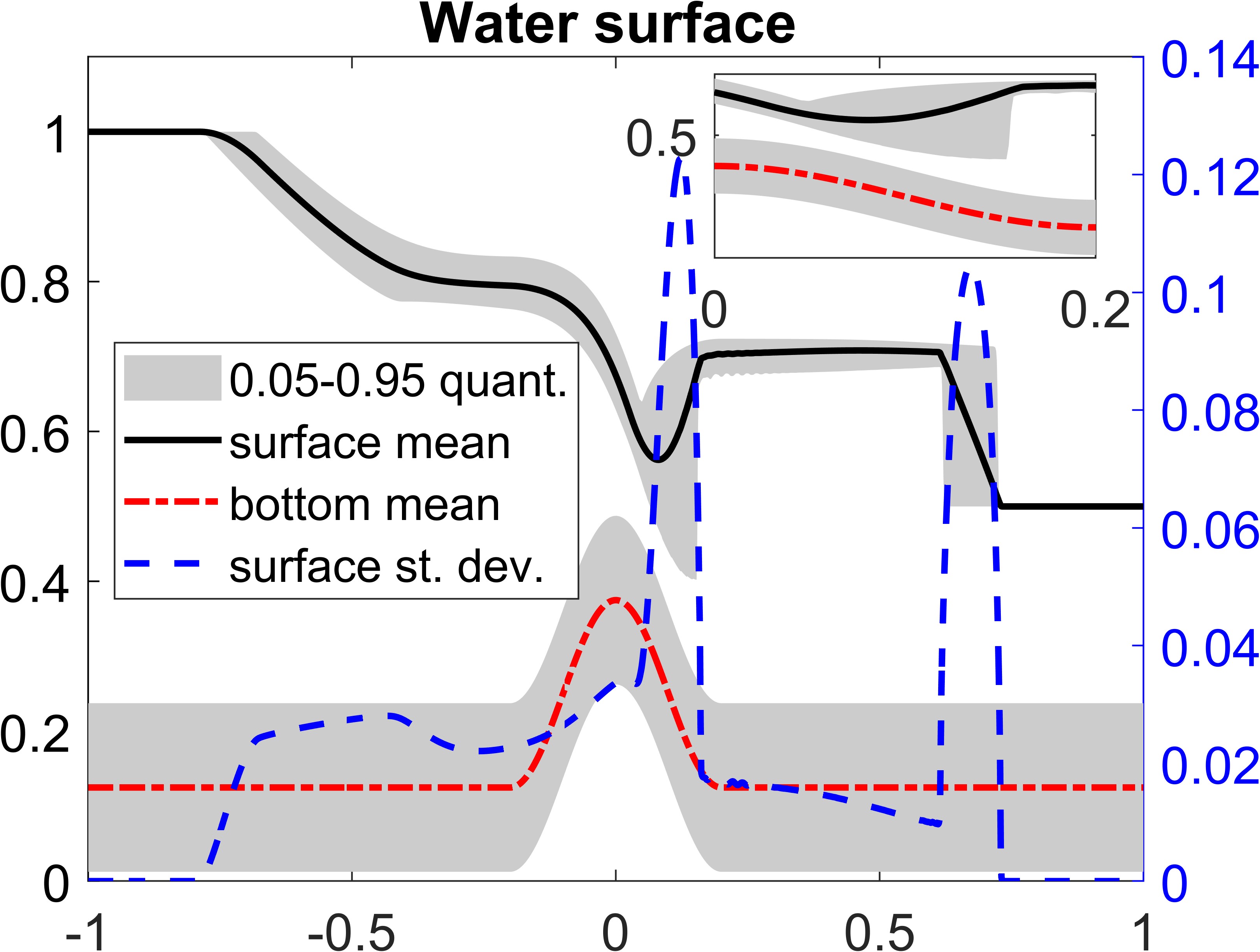}\hspace*{0.5cm}\includegraphics[width=0.37\textwidth]{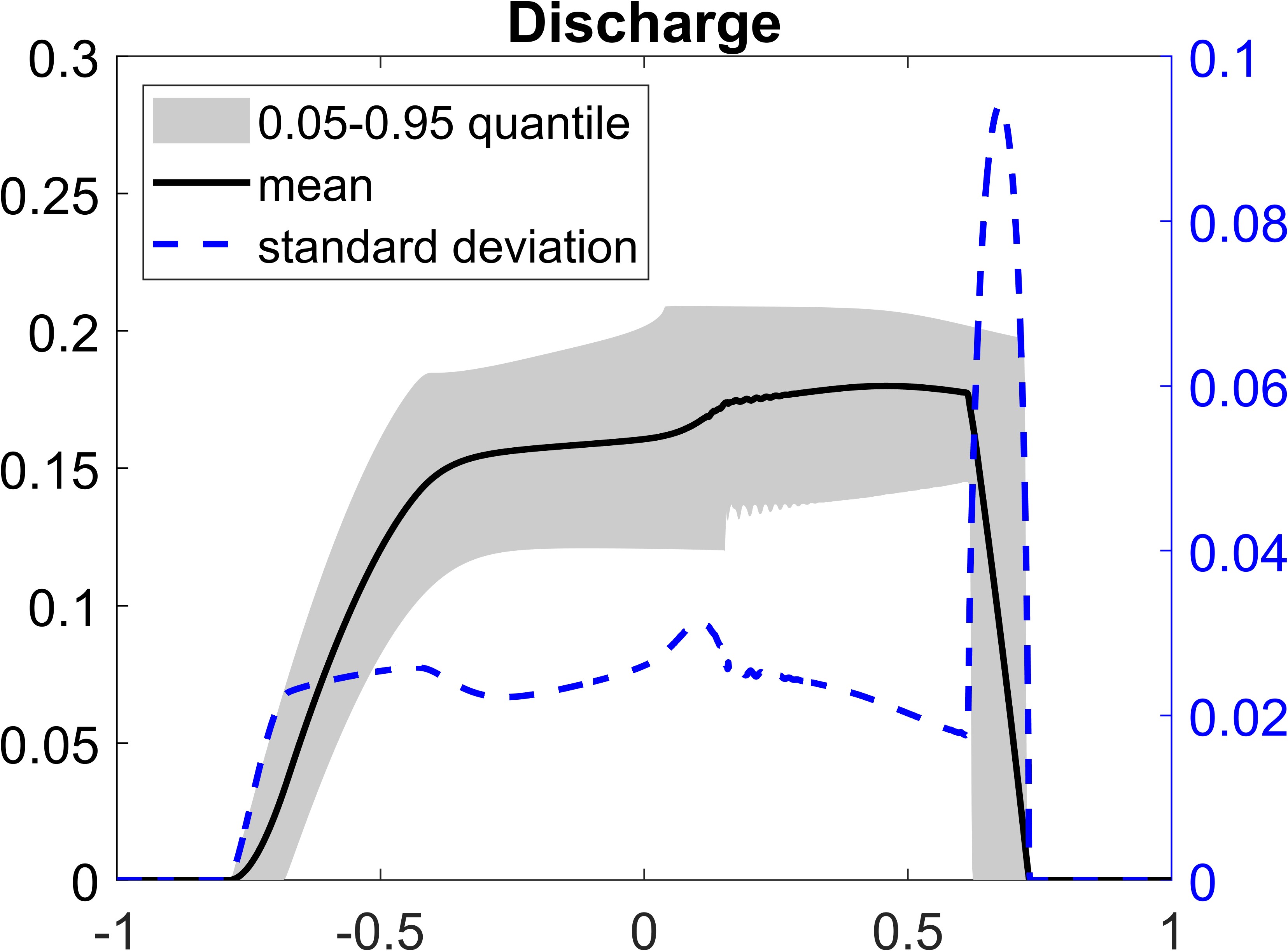}}
\caption{\sf Example 5: Mean, 95\%-quantile, and standard deviation of $w$ (left) and $hu$ (right).\label{fig61}}
\end{figure}

\paragraph*{Example 6 (Random Water Surface, $d=s=1$).} This example is taken from \cite[\S5.2]{DEN21a}. We now consider the deterministic
bottom topography
\begin{equation*}
Z(x,\xi)=\left\{\begin{aligned}
&10(x-0.3),&&0.3\le x\le0.4,\\
&1-0.0025\sin^2(25\pi x),&&0.4\le x\le0.6,\\
&-10(x-0.7),&&0.6\le x\le0.7,\\
&0,&&\mbox{otherwise},
\end{aligned}\right.
\end{equation*}
and the ICs with a randomly perturbed water surface
\begin{equation*}
w(x,0,\xi)=\left\{\begin{aligned}&1.001+0.001\xi,&&0.1<x<0.2,\\&1,&&\mbox{otherwise},\end{aligned}\right.\qquad u(x,0,\xi)\equiv0,\quad
{\xi\sim{\cal U}(-1,1)}.
\end{equation*}
The spatial domain is $x\in[-1,1]$ and free BCs are implemented at both ends of the interval. The water surface $w$ and discharge $hu$,
computed on a uniform grid with $\dx=1/800$ and $\dxi=1/50$ at time $t=1$ are plotted in Figure \ref{fig62}. The obtained results are
similar to those obtained in \cite{DEN21a} using a gPC-SG method, but our results are visibly sharper. We believe that the filter used in
\cite{DEN21a} causes additional smearing so that the main advantage of the gPC-SG method---spectral accuracy in the random space---is lost.
\begin{figure}[ht!]
\centerline{\includegraphics[width=0.39\textwidth]{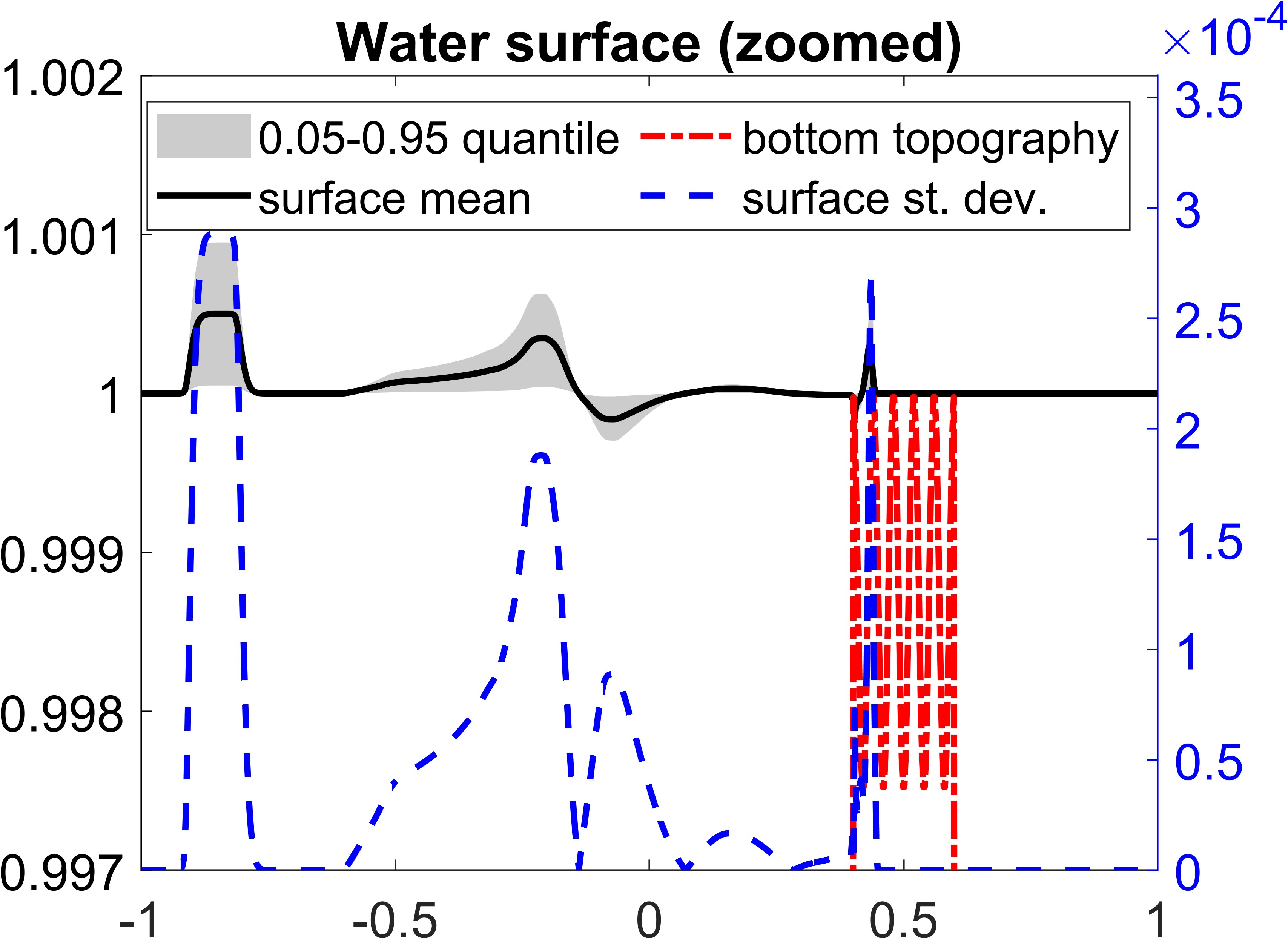}\hspace*{0.5cm}\includegraphics[width=0.37\textwidth]{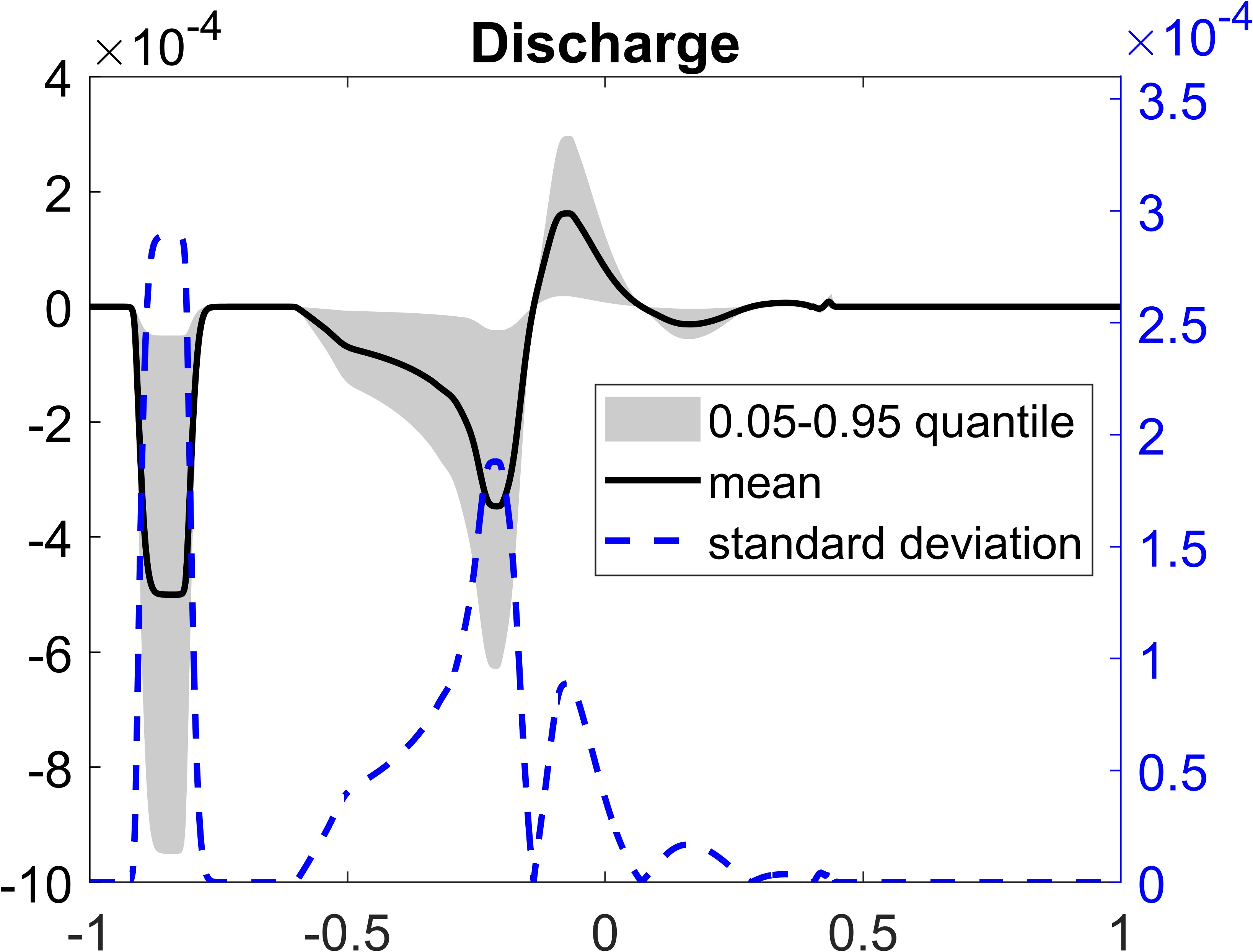}}
\caption{\sf Example 6: Mean, 95\%-quantile, and standard deviation of $w$ (left) and $hu$ (right).\label{fig62}}
\end{figure}

{
\paragraph{Example 7 (Random Discontinuous Bottom, $d=1$, $s=1$).} In this example taken from \cite[\S5.3]{DEN21a}, the deterministic ICs
for the water surface and discharge,
\begin{equation*}
w(x,0,\xi)=\left\{\begin{aligned}&5,&&x\le0.5,\\&1.6,&&\mbox{otherwise},\end{aligned}\right.\quad
u(x,\xi,0)=\left\{\begin{aligned}&1,&&x\le0.5,\\&-2,&&\mbox{otherwise},\end{aligned}\right.
\end{equation*}
and a stochastic discontinuous bottom,
\begin{equation*}
Z(x,\xi)=\left\{\begin{aligned}&1.5+0.1\xi,&&x\le0.5,\\&1.1+0.1\xi,&&\mbox{otherwise},\end{aligned}\right.
\end{equation*}
are prescribed in the spatial domain $x\in[0,1]$ with free BCs implemented at both ends of the interval. Here, $\xi$ is a random variable
with the PDF
\begin{equation*}
\nu(\xi)=\frac{(1-\xi)^3(1+\xi)}{32\,{\cal B}(2,4)},
\end{equation*}
which corresponds to the Beta distribution over $[-1,1]$ and ${\cal B}$ stands for the Beta function.

We compute the solution on a uniform mesh with $\dx=1/401$ and $\dxi=1/50$ until the final time $T=0.15$. The obtained water surface and
discharge are plotted in \fref{fig62a}. While our results are similar to those reported in \cite[\S5.3]{DEN21a}, the over- and undershoots
seen in the neighborhood of the bottom discontinuity in \fref{fig62a} are smaller despite the fact that the results in
\cite[Figure 5]{DEN21a} were filtered to prevent large oscillations.
}
\begin{figure}[ht!]
\centerline{\includegraphics[width=0.37\textwidth]{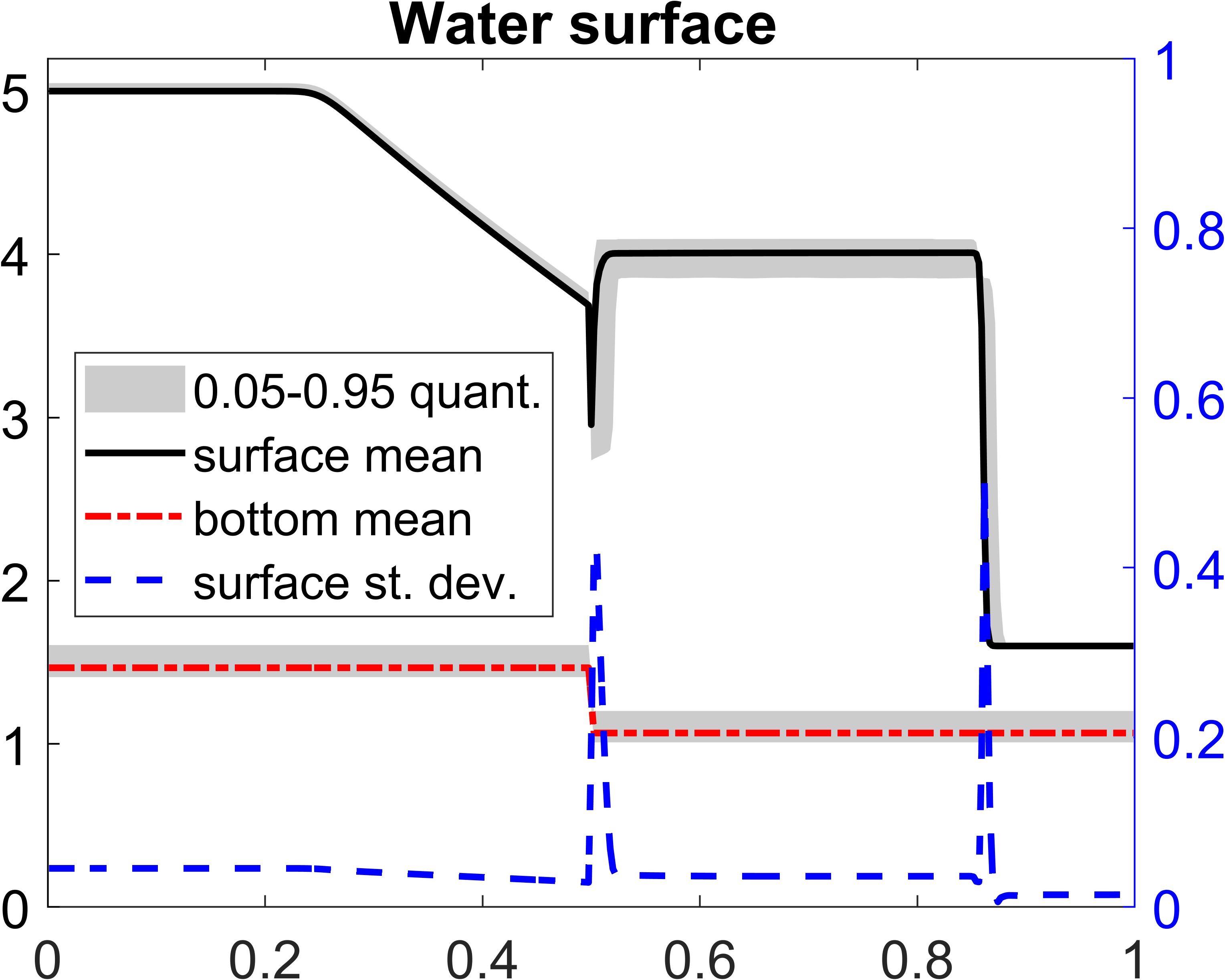}\hspace*{0.5cm}
            \includegraphics[width=0.37\textwidth]{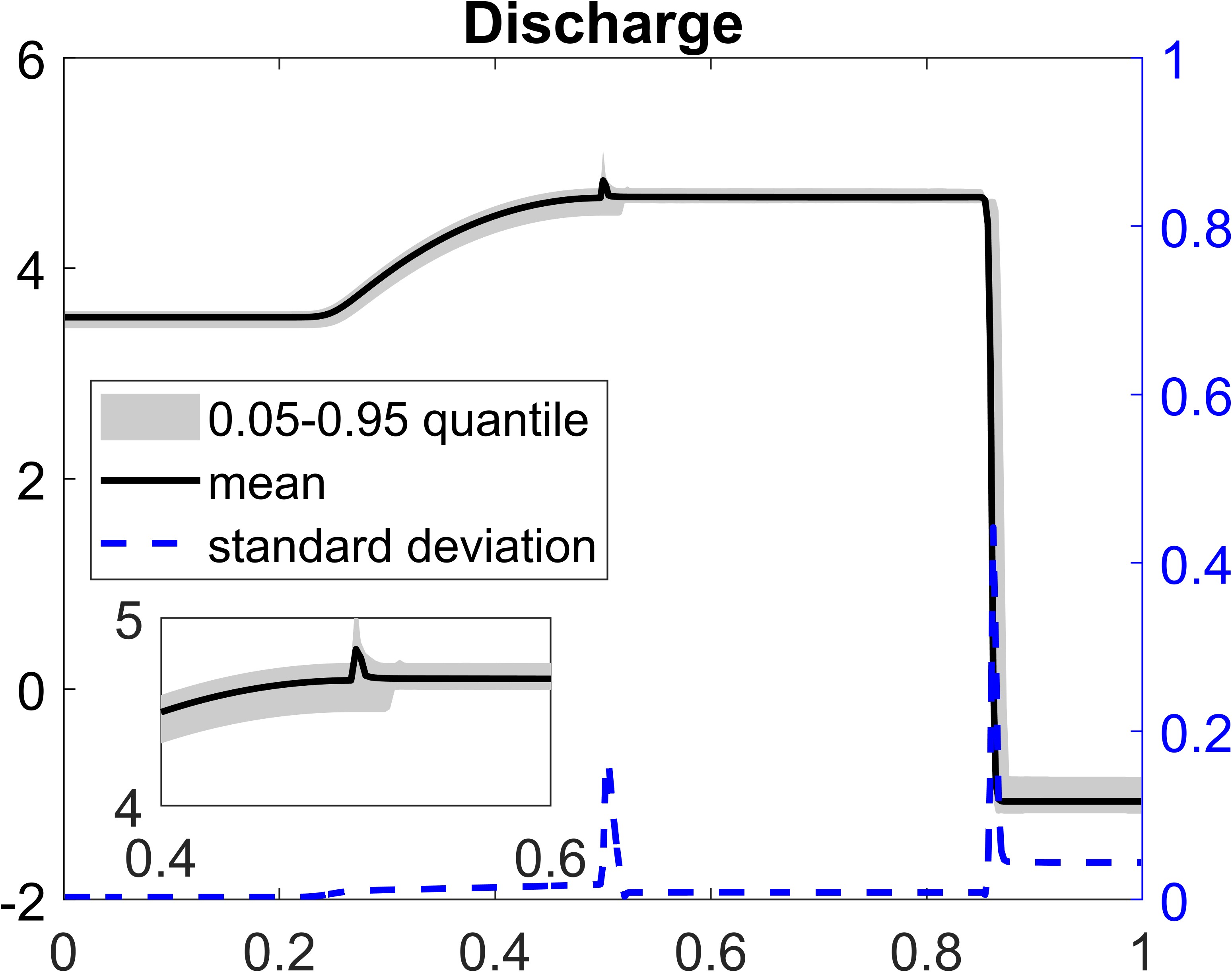}}
\caption{\sf{Example 7: Mean, 95\%-quantile, and standard deviation of $w$ (left) and $hu$ (right).}\label{fig62a}}
\end{figure}

\paragraph{Example 8 (Random Water Surface and Discharge, $d=1$, $s=2$).} We take random ICs for perturbed water surface and discharge,
\begin{equation*}
\begin{aligned}
&w(x,\xi,\eta,0)=\max\big\{0.9,Z(x,\xi,\eta)\big\}+\left\{
\begin{aligned}
&0.1(1+0.1\xi),&&-1.6<x<-1.4,\\&0,&&\text{otherwise},
\end{aligned}
\right.\\
&u(x,\xi,\eta,0)=0.01\eta,\quad{\xi\sim{\cal U}(-1,1),\quad\eta\sim{\cal U}(-1,1)},
\end{aligned}
\end{equation*}
and the following deterministic bottom topography:
\begin{equation*}
Z(x,\xi,\eta)=\left\{\begin{aligned}&(1-x^2),&&-1<x<1,\\&0,&&\text{otherwise},\end{aligned}\right.
\end{equation*}
all considered in the spatial domain is $x\in[-4,2]$ with free BCs implemented at both ends of the interval.

In this example, the ICs give rise to two waves, which emerge in the area $x\in[-1.6,-1.4]$ and then propagate in the opposite directions.
The right-moving wave reaches the initially dry area and then moves over the ``island'', leading to the occurrence of wetting/drying
processes. This makes the studied problem numerically challenging. However, the proposed finite-volume method is capable of robustly
capturing the solution as shown in Figure \ref{fig63}, where time snapshots of the mean and standard deviation of the water surface,
computed on a uniform mesh with $\dx=1/100$ and $\dxi=\deta=1/10$, are plotted.
\begin{figure}[ht!]
\centerline{\includegraphics[width=0.32\textwidth]{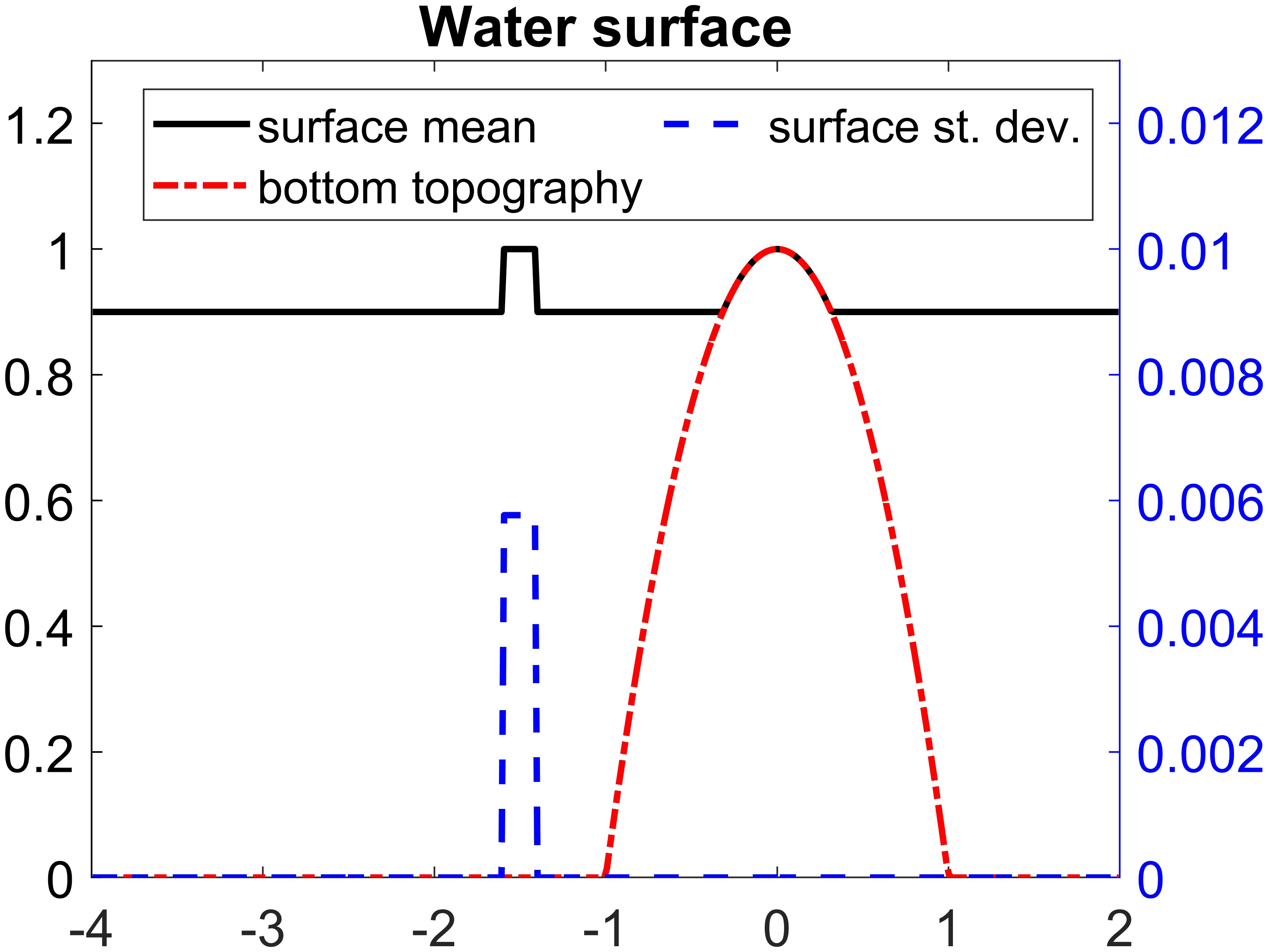}\hspace*{0.5cm}\includegraphics[width=0.32\textwidth]{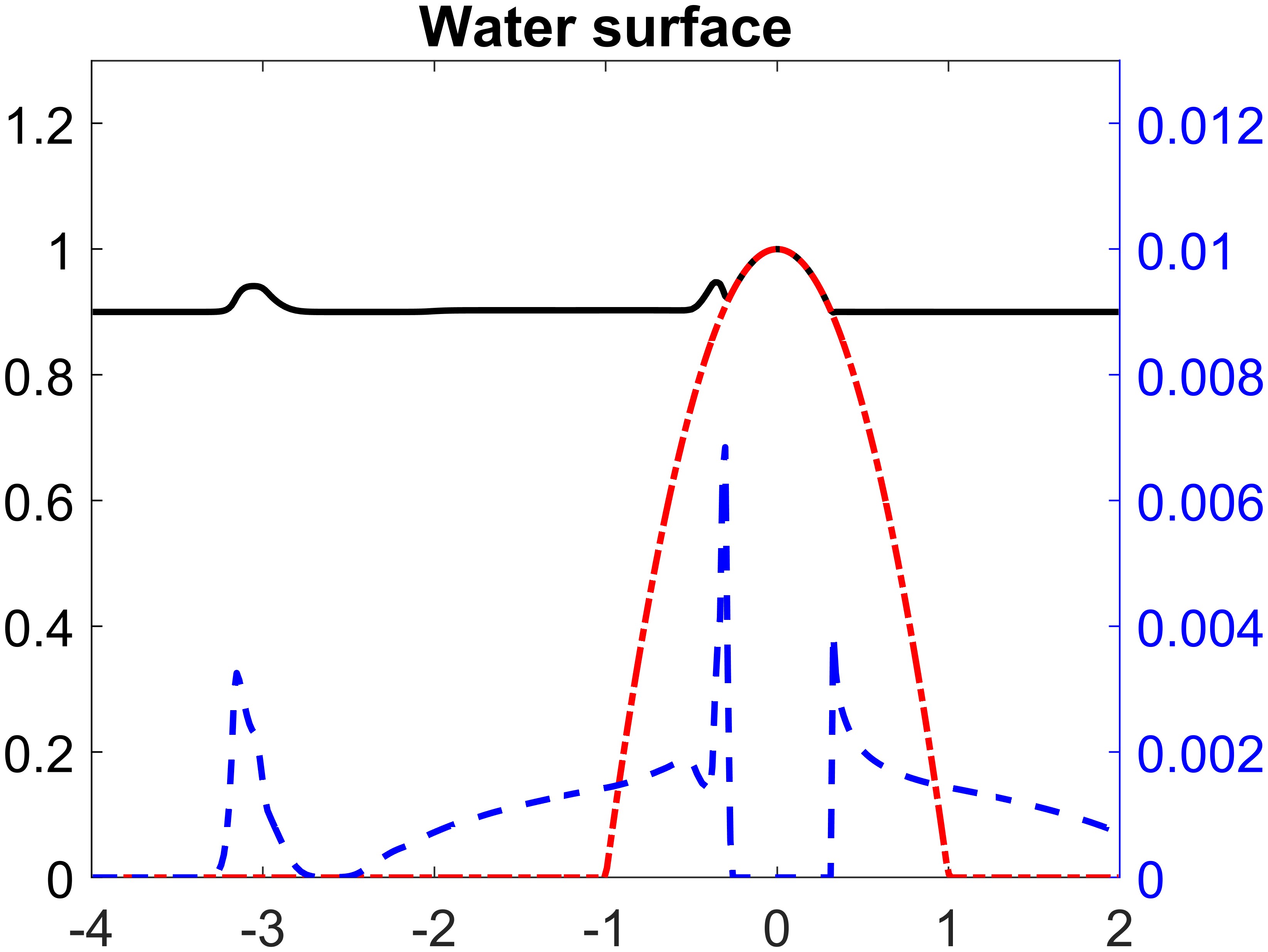}}
\vskip 8pt
\centerline{\includegraphics[width=0.32\textwidth]{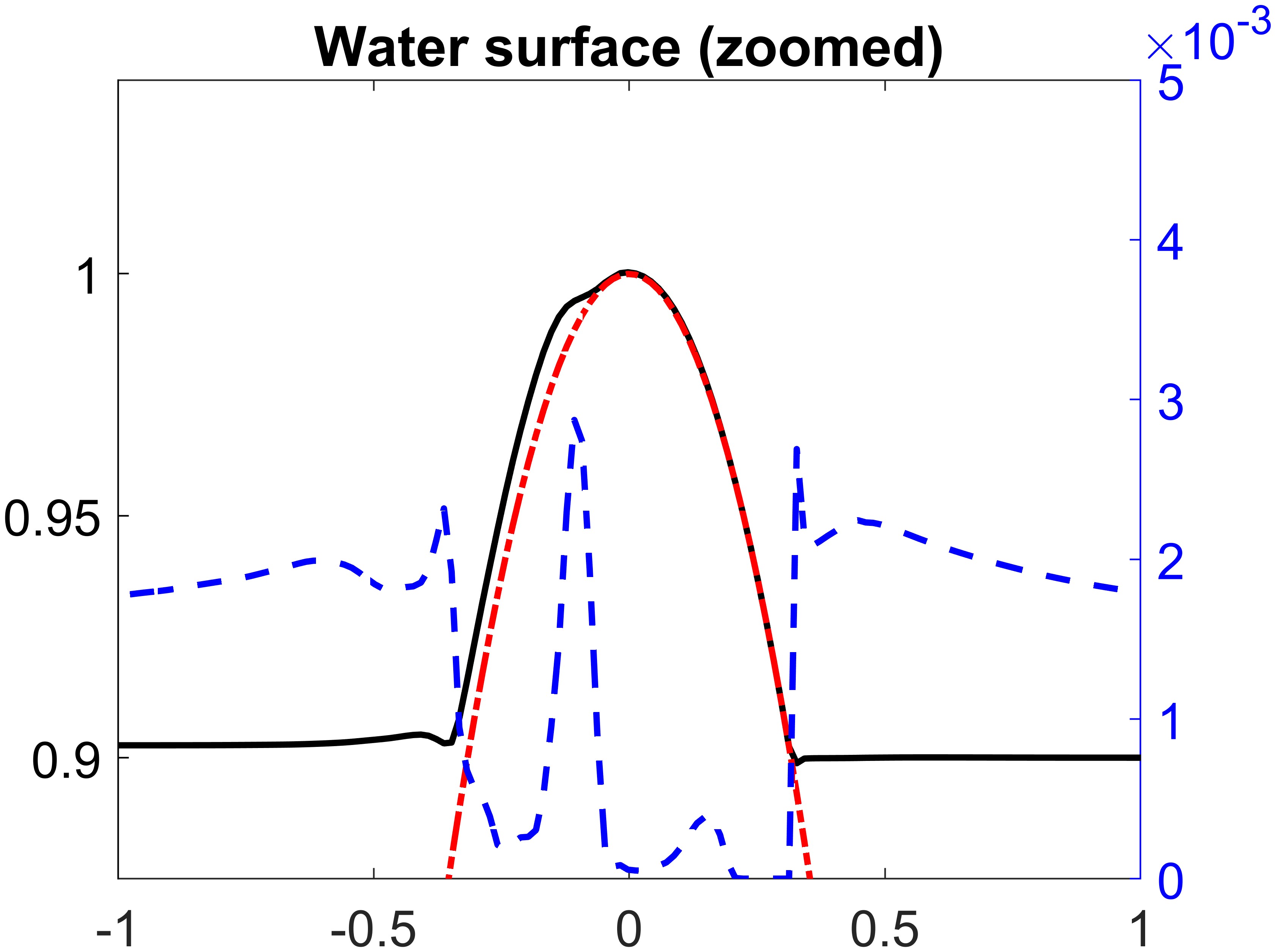}\hspace*{0.5cm}
            \includegraphics[width=0.32\textwidth]{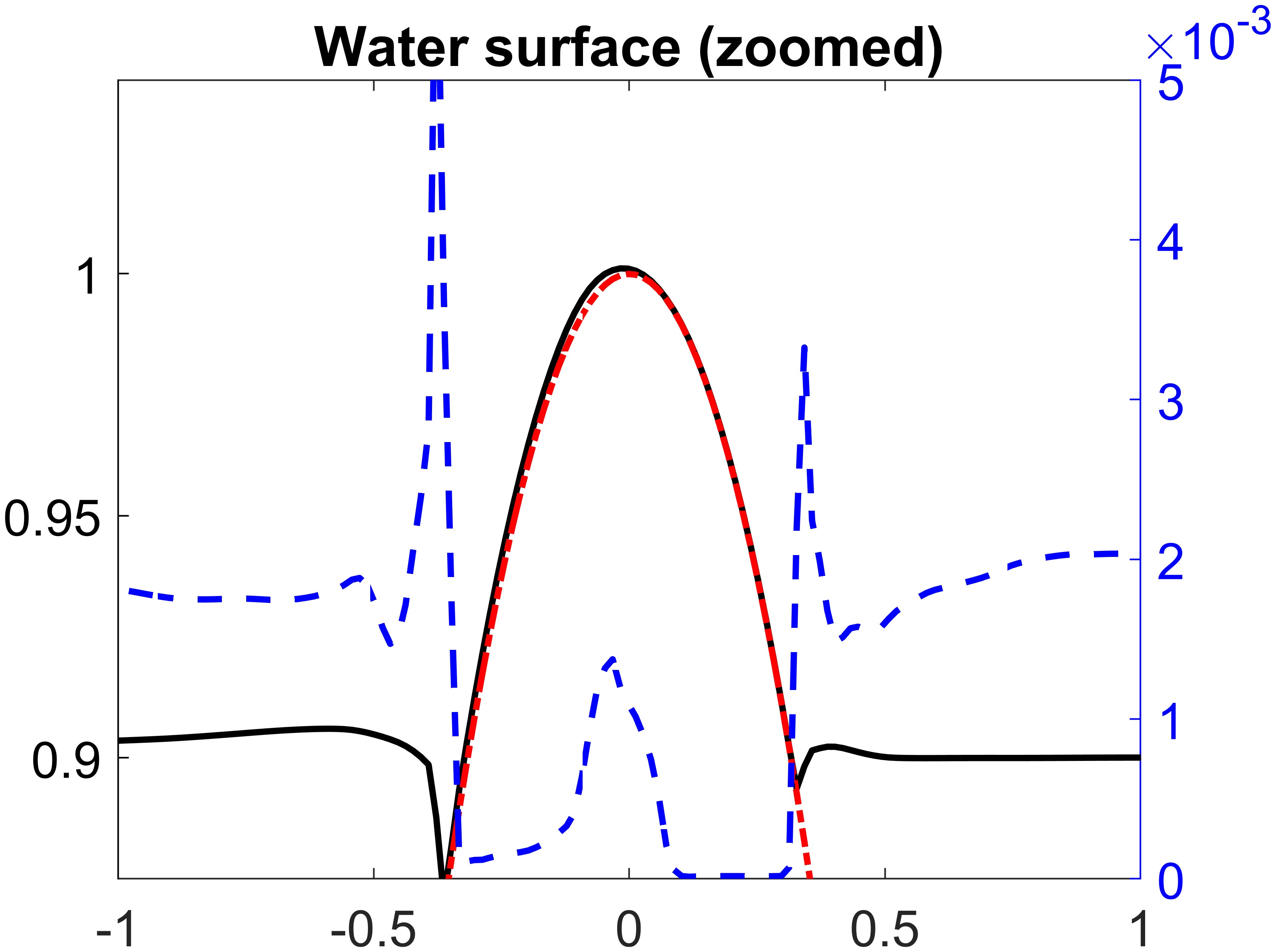}}
\caption{\sf Example 8: Mean and standard deviation of $w$ at $t=0$ (top left), 0.5 (top right), 0.75 (bottom left), and 1 (bottom right).
\label{fig63}}
\end{figure}

\paragraph{Example 9 (Random Bottom Topography, $d=2$, $s=1$).} In the final example, we consider the deterministic ICs
\begin{equation*}
w(x,y,\xi,0)=\left\{\begin{aligned}&\max\big\{1.01,Z(x,y)\big\},&&0.05<x<0.15,\\&\max\big\{1,Z(x,y)\big\},&&\mbox{otherwise}\end{aligned}
\right.\qquad u(x,y,\xi,0)\equiv0,
\end{equation*}
and the following random bottom topography:
\begin{equation*}
Z(x,y,\xi)=0.8e^{-5(x-0.9)^2-50(y-0.5)^2}+0.1(\xi+1),\quad{\xi\sim{\cal U}(-1,1)},
\end{equation*}
in the spatial domain $(x,y)\in[0,2]\times[0,1]$ with free BCs prescribed at all sides of its boundary.

In Figure \ref{fig64}, we plot the mean and standard deviation of the water surface computed on a uniform mesh with $\dx=\dy=1/200$ and
$\dxi=1/10$ at time $t=1.2$. As in Example 3, the standard deviation identifies the areas in which the uncertainties the solution the most.
\begin{figure}[ht!]
\centerline{\includegraphics[width=0.32\textwidth]{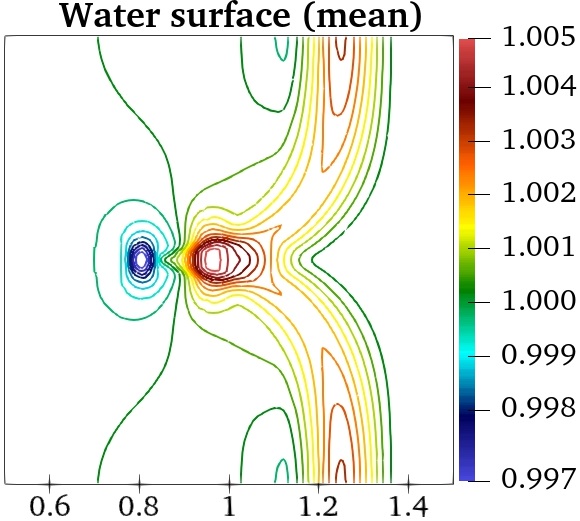}\hspace*{1.0cm}\includegraphics[width=0.32\textwidth]{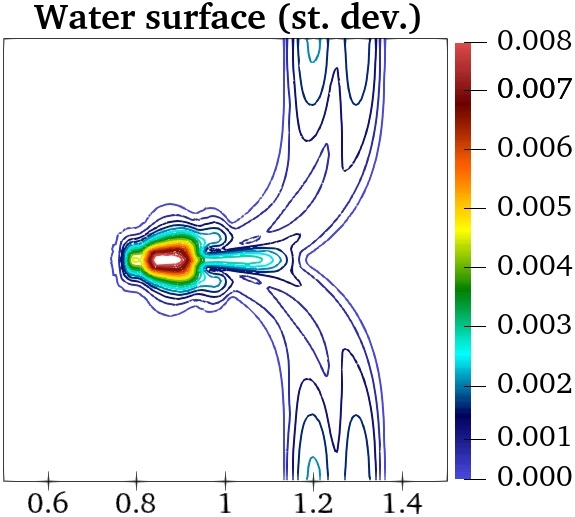}}
\caption{\sf Example 9: Mean (left) and standard deviation (right) of $w$.\label{fig64}}
\end{figure}

\section{Conclusions}\label{sec7}
In this paper, we have introduced new well-balanced and positivity preserving finite-volume methods for nonlinear hyperbolic systems of PDEs
with uncertainties. These methods are based on finite-volume approximation in both the spatial and random directions. In order to achieve a
high order of accuracy, we have applied second-order minmod reconstruction in physical space and fifth-order Ai-WENO-Z interpolation in
random directions. The lack of boundary conditions in the random space requires the development of a special high-order interpolation
technique near the boundary. This has been achieved by designing a new one-sided Ai-WENO-Z interpolation. Even though, our numerical
experiments did not show any sensitivity towards this choice, we note that it may possibly lead to instabilities if, for instance, strong
discontinuities travel to a random boundary. We also note that the proposed high-order finite-volume method is well suited for hyperbolic
systems with low or medium dimensional uncertainties. In order to overcome the curse of dimensionality, we suggest to apply GPU
parallelization for high-dimensional random spaces. Both the problem of boundary conditions and curse of dimensionality in random space
constitute interesting topics for future research.

\subsubsection*{Conflict of Interest Statement}
On behalf of all authors, the corresponding author states that there is no conflict of interest.

\begin{acknowledgment}
The work of A. Chertock was supported in part by NSF grant DMS-2208438. The work of M. Herty was supported in part by the DFG (German
Research Foundation) through 20021702/GRK2326, 333849990/IRTG-2379, HE5386/ 18-1, 19-2, 22-1, 23-1, and under Germany’s Excellence Strategy
EXC-2023 Internet of Production 390621612. The work of A. Kurganov was supported in part by NSFC grant 12171226 and the fund of the
Guangdong Provincial Key Laboratory of Computational Science and Material Design (No. 2019B030301001). The work of
M. Luk\'a\v{c}ov\'a-Medvi{\softd}ov\'a has been funded by the DFG under the SFB/TRR 146 Multiscale Simulation Methods for Soft Matter
Systems. M. Luk\'a\v{c}ov\'a-Medvi{\softd}ov\'a gratefully acknowledges the support of the Gutenberg Research College of University Mainz
and the Mainz Institute of Multiscale Modeling. A. Chertock and A. S. Iskhakov also acknowledge the support of the LeRoy B. Martin, Jr.
Distinguished Professorship Foundation. M. Herty and M. Luk\'a\v{c}ov\'a-Medvi{\softd}ov\'a also acknowledge the support of the DFG through
the project 525853336 funded within the Focused Programme SPP 2410 ``Hypebrolic Balance Laws: Complexity, Scales and Randomness''.
\end{acknowledgment}

\bibliographystyle{siamnodash}
\bibliography{ref}

\end{document}